\documentclass[reqno]{amsart}

\usepackage{soul}

\usepackage{amsmath,times}
\usepackage{amsthm,amssymb}
\setcounter{tocdepth}{1}
\usepackage{latexsym}
\usepackage{stackrel}
\usepackage{graphicx}
\usepackage{color}
\usepackage{enumerate}
\usepackage{float}
\usepackage{array}
\newcolumntype{L}{>{$}l<{$}} 

\usepackage{comment}
\usepackage{cite}

\usepackage{url}
\usepackage{breakurl}

\theoremstyle{definition}

\newtheorem{theorem}{Theorem}[section]
\newtheorem*{theorem*}{Theorem}
\newtheorem{lemma}[theorem]{Lemma}

\newtheorem{proposition}[theorem]{Proposition}

\theoremstyle{definition}
\newtheorem{example}[theorem]{Example}
\newtheorem{definition}[theorem]{Definition}
\newtheorem{corollary}[theorem]{Corollary}
\newtheorem*{corollary*}{Corollary}
\newtheorem{remark}[theorem]{Remark}
\newtheorem{question}[theorem]{Question}
\newtheorem{remarks}[theorem]{Remarks}

\newenvironment{sketchproof}{%
  \proof}{\endproof}

\numberwithin{equation}{section}

\newcommand{\HR}{\operatorname{HR}}
\newcommand{\wHR}{\operatorname{HR_w}}

\newcommand{\sub}{\operatorname{sub}}
\newcommand{\Hom}{\operatorname{Hom}}

\newcommand{\rank}{\operatorname{rk}}


\title{On Hodge-Riemann Cohomology Classes}

\author{Julius Ross and Matei Toma}

\address{Department of Mathematics, Statistics, and Computer Science, University of Illinois at Chicago, 322 Science and Engineering Offices (M/C 249), 851 S. Morgan Street, Chicago, IL 60607
 }
\email{juliusro@uic.edu}

\address{Universit\'e de Lorraine, CNRS, IECL, F-54000 Nancy, France
 }
\email{matei.toma@univ-lorraine.fr}

\keywords{ 14C17, 14J60, 32J27, 52A40}
\date{\today}

\begin{document}
\bibliographystyle{plain}

\begin{abstract}
We prove that Schur classes of nef vector bundles are limits of classes that have a property analogous to the Hodge-Riemann bilinear relations.   We give a number of applications, including (1)  new log-concavity statements about characteristic classes of nef vector bundles (2) log-concavity statements about Schur and related polynomials (3) another proof that normalized Schur polynomials are Lorentzian.
\end{abstract}
\maketitle

\section{Introduction}
Since the dawn of time, human beings have asked some fundamental questions: who are we?  why are we here?  is there life after death?  Unable to answer any of these,  in this paper we will consider cohomology classes on a compact projective manifold that have a property analogous to the  Hard-Lefschetz Theorem and Hodge-Riemann bilinear relations.


To state our results let $X$ be a projective manifold of dimension $d\ge 2$.   We say that a cohomology class $\Omega\in H^{d-2,d-2}(X;\mathbb R)$ has the \emph{Hodge-Riemann property} if the intersection form
$$ Q_{\Omega}(\alpha,\alpha') := \int_X \alpha \Omega \alpha' \text{ for } \alpha,\alpha'\in H^{1,1}(X;\mathbb R)$$
has signature $(+,-,-,\ldots,-)$.    We write $$\HR(X)= \{ \Omega \text{ with the Hodge Riemann property}\}$$ 
and $\overline{\HR}(X)$ for its closure.\medskip

This definition is made in light of the fact that the classical Hodge-Riemann bilinear relations say precisely that if $L$ is an ample line bundle on $X$, then $c_1(L)^{d-2}$ is in $\HR(X)$.   A natural question, initiated by Gromov \cite{Gromov90}, is if there are other cohomology classes that have this property, and our first result answers this in terms of certain characteristic classes of vector bundles.

\begin{theorem*}[$\subseteq$ Theorem \ref{thm:derivedschurclassesareinHR}] Let $E$ be a nef vector bundle on $X$ and $\lambda$ be a partition of $d-2$.  Then the Schur class
$s_{\lambda}(E)$ lies in $\overline{\HR}(X)$.
\end{theorem*}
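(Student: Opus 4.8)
The plan is to reduce the statement to the classical Hodge--Riemann bilinear relations via a limiting/degeneration argument, exploiting two features: that nef bundles are limits of ample bundles (after twisting), and that $\overline{\HR}(X)$ is closed under the operations needed to build Schur classes. First I would recall the standard presentation of a Schur class of a vector bundle $E$ of rank $r$ as a polynomial in the Chern classes, or better, via the Jacobi--Trudi-type formula, realize $s_\lambda(E)$ as a pushforward from a flag bundle: on the full flag bundle $\pi\colon \mathrm{Fl}(E)\to X$ one has tautological quotient line bundles whose Chern classes $x_1,\dots,x_r$ satisfy $\pi_*\big(\prod_i x_i^{\,\mu_i}\big)=s_\lambda(E)$ for the appropriate shift $\mu$ of the partition $\lambda$. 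When $E$ is ample, each $x_i$ is (relatively) ample, and the total space $\mathrm{Fl}(E)$ carries ample classes pulled back from $X$ together with these relative classes. The heart of the matter is then to show that the pushforward along $\pi$ of a product of such ample classes on $\mathrm{Fl}(E)$, of the correct codimension $d-2$ on $X$, lies in $\HR(X)$ (or at least in $\overline{\HR}(X)$).

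The key intermediate steps I would carry out are: (1) Establish a \emph{pushforward stability} statement: if $f\colon Y\to X$ is a surjective morphism of projective manifolds with $\dim Y = \dim X + k$, and $\Theta\in \HR(Y)$ is represented suitably, together with $k$ ample classes $h_1,\dots,h_k$ on $Y$, then $f_*(h_1\cdots h_k\,\Theta)$ — or, more robustly, $f_*$ of a product of $\dim Y - 2$ ample classes — lies in $\overline{\HR}(X)$. This should follow from the projection formula combined with the fact that restricting the Hodge--Riemann form on $Y$ to the subspace $f^*H^{1,1}(X)$ and using ampleness of the $h_i$ to do a Lefschetz-type reduction controls the signature; the mixed Hodge--Riemann relations (Khovanskii--Teissier / Dinh--Nguyen style) for products of distinct ample classes are the natural engine here. (2) Apply this with $Y=\mathrm{Fl}(E)$, $f=\pi$, using that for ample $E$ all the tautological classes are ample, to deduce that $s_\lambda(E)\in\overline{\HR}(X)$ when $E$ is ample. (3) Pass from ample to nef by writing $E_\epsilon = E\otimes A^{\epsilon}$ (interpreted via a sequence $E\otimes A^{1/m}$ or by working on a twist) which is ample, noting $s_\lambda$ is continuous in the Chern classes, and using that $\overline{\HR}(X)$ is closed.

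I expect the \textbf{main obstacle} to be step (1): pushforward does not interact straightforwardly with signature, because $\pi_*$ of a nonzero Hodge--Riemann class need not itself be Hodge--Riemann without using the relative ample classes to ``fill up'' the fibre dimension, and one must ensure that the resulting form on $H^{1,1}(X)$ genuinely has exactly one positive eigenvalue rather than degenerating. The clean way around this is probably to not push forward a single $\HR$ class but rather to prove directly that $\pi_*$ of a product of $\dim \mathrm{Fl}(E) - 2$ relatively-or-absolutely ample classes lies in $\overline{\HR}(X)$, by an inductive argument on the relative dimension: one ample class at a time, each step being a hyperplane-section/Lefschetz move that preserves the Hodge--Riemann property, with the final two ``slots'' on $X$ giving the required signature from the genuine Hodge--Riemann relations on $X$ itself. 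A secondary subtlety is that $\HR(X)$ as defined requires strict signature, so every approximation must be arranged to land in the open locus before taking closures; handling the boundary carefully (e.g.\ perturbing by a small multiple of $c_1(L)^{d-2}$ for ample $L$) will be needed to keep the argument inside $\HR(X)$ until the last limit.
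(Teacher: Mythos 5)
Your framework of ``pushforward stability'' is sound and is essentially the paper's Lemma \ref{lem:simplelinearalgebra} together with Lemmas \ref{lem:HRpushforward} and \ref{lem:intermediatesingular}: if $\Theta$ on $Y$ has the (weak) Hodge--Riemann property and $f^*H^{1,1}(X)$ contains a class with $\int_Y \Theta (f^*h)^2\ge 0$, then $f_*\Theta\in\wHR(X)$, since $Q_{f_*\Theta}$ is the restriction of $Q_\Theta$ to $f^*H^{1,1}(X)$. Your step (3) and your closing remark about perturbing so as to land in the open locus $\HR(X)$ before taking limits also match what the paper does (it twists by $t h$ and uses that $\det Q_{\Omega_t}$ is a nonvanishing polynomial in $t$ with positive leading behaviour).

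The genuine gap is in the input to step (1), namely the claim that on the full flag bundle $\pi\colon\mathrm{Fl}(E)\to X$ of an ample $E$ ``each $x_i$ is (relatively) ample.'' This is false: the individual tautological quotient line bundles $Q_i$ restrict on each fibre $\mathrm{Fl}(\C^r)$ to line bundles associated to the weights $e_i$, which are not dominant regular, so the $x_i$ are not ample, and for $i<r$ not even nef (only monomials $\bigotimes Q_i^{a_i}$ with $a_1>a_2>\cdots$ are relatively ample). Consequently $s_\lambda(E)$ is \emph{not} exhibited as the pushforward of a product of ample or nef $(1,1)$-classes, and the mixed Hodge--Riemann relations of Khovanskii--Teissier/Dinh--Nguyen type, which you invoke as ``the natural engine,'' have nothing to bite on; your proposed inductive ``one ample class at a time'' Lefschetz reduction presupposes exactly the ampleness that fails. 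Indeed, if Schur classes were products of nef classes in this way the whole problem would collapse to Gromov's theorem. The paper circumvents this by the Fulton--Lazarsfeld cone construction (\S\ref{sec:coneclasses}): it writes $s_\lambda(E)=\pi_*c_{n-2}(U|_C)$ for a \emph{single} top-degree Chern class of a nef bundle $U$ of rank $n-2$ on an irreducible $n$-dimensional cone $C$ (resolved if necessary), and the Hodge--Riemann input is then Lemma \ref{lemma:cn2I} --- that $c_{n-2}$ of a nef bundle of corank $2$ lies in $\wHR$ --- which is proved by a Bloch--Gieseker nondegeneration argument, not by mixed Hodge--Riemann relations for divisor classes. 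To repair your argument you would need either this cone construction or some other device converting $s_\lambda(E)$ into a positively-behaved class upstairs; the flag-bundle monomial presentation alone does not supply one.
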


 In fact we can do better; for each $i$ define the \emph{derived Schur polynomials} $s_{\lambda}^{(i)}$ by requiring that
$$ s_{\lambda}(x_1+t,\ldots,x_e+t) = \sum_{i=0}^{|\lambda|} s_{\lambda}^{(i)}(x_1,\ldots,x_e) t^i.$$
\begin{theorem*}[$\subseteq$ Theorem \ref{thm:derivedschurclassesareinHR}]  Let $E$ be a nef vector bundle on $X$ and $\lambda$ be a partition of $d-2+i$.  Then the derived Schur class
$s_{\lambda}^{(i)}(E)$ lies in $\overline{\HR}(X)$.
\end{theorem*}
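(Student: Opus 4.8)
The plan is to reduce to the Hodge--Riemann property for \emph{ordinary} Schur classes of \emph{ample} bundles, to realize such a class as the pushforward of a positive product from a flag bundle, to run the mixed Hodge--Riemann bilinear relations there, and to transport the resulting signature back down to $X$. Since $\overline{\HR}(X)$ is closed, since $E$ is a limit (degree by degree) of the ample bundles $E\otimes A_m$ with $c_1(A_m)=\tfrac1m c_1(A)$ for a fixed ample $A$, and since $s_\lambda^{(i)}$ is a polynomial in the Chern data, it suffices to treat $E$ ample. To eliminate the superscript, let $p\colon P:=X\times\P^i\to X$ and let $h\in H^{1,1}(P)$ be the pullback of the hyperplane class of $\P^i$; comparing Chern roots gives $s_\lambda\bigl(p^*E\otimes\cO_P(1)\bigr)=\sum_{k=0}^{|\lambda|}p^*\bigl(s_\lambda^{(k)}(E)\bigr)h^k$, and since $p_*h^k=0$ for $k\neq i$ while $p_*h^i=1$, pushing down yields $s_\lambda^{(i)}(E)=p_*\bigl(s_\lambda(p^*E\otimes\cO_P(1))\bigr)$, with $p^*E\otimes\cO_P(1)$ again ample. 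So it is enough to prove that $s_\lambda(F)\in\HR(Z)$ for an ample bundle $F$ of rank $e$ on an arbitrary projective manifold $Z$ of dimension $n\ge 2$ and any partition $\lambda$ of $n-2$, and then to apply the same pushforward argument to the map $p$.

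For this, let $\pi\colon\mathrm{Fl}(F)\to Z$ be the full flag bundle, with $x_1,\dots,x_e$ the Chern roots of $\pi^*F$ enumerated so that, for each $k$, the rank-$k$ tautological quotient bundle $Q_k$ of $\pi^*F$ has Chern roots $x_1,\dots,x_k$. Put $a_j:=\lambda_j+e-j$, a strictly decreasing sequence; in a suitable convention the Kempf--Laksov/Jacobi--Trudi pushforward formula reads $s_\lambda(F)=\pi_*\bigl(\prod_j x_j^{a_j}\bigr)$, so that
\[
Q^Z_{s_\lambda(F)}(\alpha,\beta)=\int_{\mathrm{Fl}(F)}\pi^*\alpha\cdot\pi^*\beta\cdot\prod_j x_j^{a_j}\qquad(\alpha,\beta\in H^{1,1}(Z;\R)).
\]
Because $F$ is nef, every $Q_k$ is nef (a quotient of the nef bundle $\pi^*F$), hence $c_1(Q_k)=x_1+\dots+x_k$ is nef; regrouping gives $\prod_j x_j^{a_j}=\prod_k(x_1\cdots x_k)^{a_k-a_{k+1}}=\prod_k c_k(Q_k)^{a_k-a_{k+1}}$ (with $a_{e+1}:=0$), a product of top Chern classes of nef bundles whose codimensions sum to $\dim\mathrm{Fl}(F)-2$.

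The crux is to show that $Q^{\mathrm{Fl}(F)}_{\prod_j x_j^{a_j}}$ has at most one positive eigenvalue. The individual roots $x_j$ and their differences are \emph{not} nef, so this falls outside the classical framework: $\prod_j x_j^{a_j}$ is a product of top Chern classes $c_k(Q_k)$ of nef bundles, which are pseudoeffective but in general not nef. What is needed is a mixed Hodge--Riemann bilinear relation valid for products of Chern classes of ample bundles (one then twists the $Q_k$ by a sufficiently ample class on $\mathrm{Fl}(F)$ and lets the twist degenerate), strengthening the classical relations for products of K\"ahler --- i.e.\ line-bundle --- classes going back to Gromov \cite{Gromov90} and refined by Timorin and by Dinh--Nguy\^en. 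This strengthening, together with the Hard--Lefschetz/nondegeneracy input below, is the technical heart; everything else is routine. Granting it, the descent works as follows: $\pi$ is surjective, so $\pi^*$ is injective on $H^{1,1}$ and $Q^Z_{s_\lambda(F)}$ is the restriction of $Q^{\mathrm{Fl}(F)}_{\prod_j x_j^{a_j}}$ to a subspace, whence by Cauchy interlacing it has at most one positive eigenvalue; it has exactly one, since for $h$ ample on $Z$, $\int_Z h^2 s_\lambda(F)=\int_{\mathrm{Fl}(F)}(\pi^*h)^2\prod_j x_j^{a_j}>0$ by the Fulton--Lazarsfeld positivity of Schur classes of ample bundles; and nondegeneracy is a Hard--Lefschetz statement for $s_\lambda(F)$, $F$ ample. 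Thus $s_\lambda(F)\in\HR(Z)$. Applying this with $Z=P$ and $F=p^*E\otimes\cO_P(1)$ and pushing down by $p_*$ --- the same positivity supplying the unique positive direction, and the analogous Hard--Lefschetz statement the nondegeneracy --- gives $s_\lambda^{(i)}(E)\in\HR(X)$ for $E$ ample, and the general nef case follows by the closedness of $\overline{\HR}(X)$ noted in the first reduction.
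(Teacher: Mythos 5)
The proposal has a genuine gap at its core, and you flag it yourself: after reducing $s_\lambda^{(i)}(E)$ to $s_\lambda(F)$ for $F$ ample, and then writing $s_\lambda(F)=\pi_*\bigl(\prod_k c_k(Q_k)^{a_k-a_{k+1}}\bigr)$ on the flag bundle, you need the form $Q_{\prod_k c_k(Q_k)^{a_k-a_{k+1}}}$ on $H^{1,1}(\mathrm{Fl}(F))$ to have at most one positive eigenvalue, and you "grant" a mixed Hodge--Riemann relation for products of Chern classes of ample bundles. That granted statement is not a citable strengthening of the classical mixed relations: Gromov, Timorin and Dinh--Nguy\^en treat products of nef $(1,1)$-classes only, whereas $c_k(Q_k)$ for $k\ge 2$ is a higher Chern class of a higher-rank nef bundle. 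Whether such products lie in $\HR$ is precisely the subject of this paper (it is essentially Theorem \ref{thm:monomialsderivedschurclassesareinHR} on monomials of Schur classes), so your reduction trades the theorem for an unproved statement of at least the same depth. The same applies to the nondegeneracy you invoke as "a Hard--Lefschetz statement for $s_\lambda(F)$, $F$ ample": that too is a nontrivial assertion requiring proof, not a black box.

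The paper closes exactly this gap by a different reduction. Instead of the flag bundle, it uses the Fulton--Lazarsfeld cone construction (\S\ref{sec:coneclasses}), which expresses $s_\lambda(E)$ as $\pi_*c_{n-2}(U|_C)$ for a \emph{single} nef ($\mathbb Q$-twisted) bundle $U$ of corank $2$ on an $n$-dimensional cone $C$ over $X$. For a top-degree-minus-two Chern class $c_{n-2}$ of a nef bundle of rank $n-2$, the Hodge--Riemann property is established directly (Lemma \ref{lemma:cn2I}) by the Bloch--Gieseker theorem: the forms $Q_t$ attached to $c_{n-2}(U\langle th\rangle)$ are nondegenerate for all $t>0$ and have signature $(+,-,\ldots,-)$ for $t\gg 0$, hence for all $t>0$, and one lets $t\to 0$. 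Pushing forward then gives $s_\lambda(E)\in\wHR(X)$, and a second polynomial-nonvanishing argument in the twist parameter upgrades this to $\overline{\HR}(X)$. Your $\P^i$-bundle step for removing the superscript $(i)$ and your descent via restriction of the form along $\pi^*$ coincide with the paper's; what is missing is the replacement for the "technical heart," and without it the argument does not go through.
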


 \clearpage
We prove moreover:
\begin{itemize}
\item Analogous statements hold for monomials of derived Schur classes of possibly different nef vector bundles (Theorem \ref{thm:monomialsderivedschurclassesareinHR}).
\item If $E$ is perturbed by adding a sufficiently small ample class, then  $s_{\lambda}(E)$ lies in $\HR(X)$ (rather than in just the closure) (Remark \ref{rmk:comparisonwitholdpaper}).
\item The above holds even in the setting of compact K\"ahler manifolds, where nefness of $E$ is taken in the metric sense following Demailly-Peternell-Schneider (Theorem \ref{thm:derivedschurclassesareinHR:Kahler}).
\end{itemize}

\begin{center}
*
\end{center}

Our above result is interesting even in the case that $E=\oplus_{i=1}^e L_i$ is a direct sum of ample line bundles, from which we deduce that the Schur polynomial $s_{\lambda}(c_1(L_1),\ldots,c_1(L_e))$ lies in $\overline{\HR}(X)$.   As a concrete example, $s_{(1,1)}(x_1,x_2) = x_1^2 + x_1 x_2 + x_2^2$, so if $L_1$ and $L_2$ are ample line bundles on a fourfold the class
\begin{equation}c_1(L_1)^2 + c_1(L_1)c_1(L_2) + c_1(L_2)^2 \in \overline{\HR}(X).\label{eq:simplexample}\end{equation}
As already noted, the classical Hodge-Riemann bilinear relations tell us that the classes $c_1(L_1)^2$ and $c_1(L_2)^2$ both lie in $\HR(X)$, and it was proved by Gromov \cite{Gromov90} that the mixed term $c_1(L_1)c_1(L_2)$ also lies in $\HR(X)$.  However in general having the Hodge-Riemann property is not preserved under taking convex combinations, and thus \eqref{eq:simplexample} is new.   \\

From these considerations it is natural to ask which universal combinations of characteristic classes of ample (resp.\ nef) vector bundles lie in $\HR(X)$ (resp. $\overline{\HR}(X)$).   Although we do not know the full answer to this, the following is a contribution in this direction.

\begin{theorem*}[$\subseteq$ Theorem \ref{thm:poyla}]
Let $E$ be a nef vector bundle on a projective manifold of dimension $d$, and $\lambda$ be a partition of $d-2$.  Suppose $\mu_0,\ldots,\mu_{d-2}$ is a P\'olya frequency sequence of non-negative real numbers.  Then the combination
$$ \sum_{i=0}^{d-2} \mu_i s_{\lambda}^{(i)}(E) c_1(E)^{i}$$
lies in $\overline{\HR}(X)$.
\end{theorem*}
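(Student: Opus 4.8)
The plan is to realise $\sum_{i=0}^{d-2}\mu_i\,s_\lambda^{(i)}(E)\,c_1(E)^i$ as the fibrewise integral over $\mathbb P^{d-2}$ of a single monomial of derived Schur classes of nef bundles, and then to combine Theorem~\ref{thm:monomialsderivedschurclassesareinHR} with the compatibility of $\overline{\HR}$ with pushforward. Since $\overline{\HR}(X)$ only asks for the \emph{closure} of the Hodge--Riemann locus, this compatibility is elementary: for a surjective morphism $\pi\colon Y\to X$ of projective manifolds the projection formula gives $\int_X\beta\,(\pi_*\Omega)\,\beta'=\int_Y\pi^*\beta\cdot\Omega\cdot\pi^*\beta'$, so $Q_{\pi_*\Omega}$ is the pullback of $Q_\Omega$ along the injection $\pi^*\colon H^{1,1}(X)\hookrightarrow H^{1,1}(Y)$; as the number of positive eigenvalues of a symmetric form cannot increase under restriction to a subspace, $\pi_*$ carries $\overline{\HR}(Y)$ into $\overline{\HR}(X)$.

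Assume $d\ge 3$ (the case $d=2$, where $\lambda=\emptyset$ and the claim reduces to $\mu_0\in\overline{\HR}(X)$, is the Hodge index theorem). Because the $\mu_i$ are non-negative, the P\'olya frequency condition is, by Aissen--Schoenberg--Whitney, equivalent to $\sum_{i=0}^{d-2}\mu_i t^i$ having only real roots, hence to a factorisation $\sum_{i=0}^{d-2}\mu_i t^i=\prod_{k=1}^{d-2}(\alpha_k+\beta_k t)$ with all $\alpha_k,\beta_k\ge 0$ (pad with constant factors if the degree is $<d-2$, and absorb the leading coefficient into one factor). Now set $Y:=X\times\mathbb P^{d-2}$, with projections $p\colon Y\to X$ and $q\colon Y\to\mathbb P^{d-2}$ and $h:=q^*c_1(\mathcal O(1))$; note that $p^*c_1(E)$ is nef since $\det E$ is nef, and so is $h$. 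Consider the nef vector bundle $\mathcal E:=p^*E\otimes q^*\mathcal O(1)$ and the nef $\R$-divisor classes $\eta_k:=\beta_k\,p^*c_1(E)+\alpha_k\,h$ (non-negative combinations of $p^*c_1(E)$ and $h$), and put
$$\Omega\ :=\ s_\lambda(\mathcal E)\cdot\eta_1\cdots\eta_{d-2}\ \in\ H^{2d-4,2d-4}(Y).$$
Since $s_\lambda(\mathcal E)=s_\lambda^{(0)}(\mathcal E)$ and $\eta_k=s_{(1)}^{(0)}\big(\mathcal O(\eta_k)\big)$, this is a monomial of derived Schur classes of (various) nef bundles of degree $(d-2)+(d-2)=\dim Y-2$, so $\Omega\in\overline{\HR}(Y)$ by Theorem~\ref{thm:monomialsderivedschurclassesareinHR}, extending that statement to nef $\R$-classes by a routine limiting argument since $\overline{\HR}(Y)$ is closed.

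It then remains to compute $p_*\Omega$. The Chern roots of $\mathcal E$ are $p^*x_j+h$, so the defining identity of the derived Schur polynomials gives $s_\lambda(\mathcal E)=\sum_{i=0}^{d-2}p^*s_\lambda^{(i)}(E)\,h^{\,i}$; expanding $\prod_k(\beta_k\,p^*c_1(E)+\alpha_k h)$ and using the reciprocal-polynomial relation $\prod_k(\beta_k+\alpha_k s)=\sum_j\mu_{d-2-j}\,s^{\,j}$ gives $\eta_1\cdots\eta_{d-2}=\sum_{j=0}^{d-2}\mu_{d-2-j}\,(p^*c_1(E))^{d-2-j}h^{\,j}$. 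Multiplying the two, and using $h^{\,a}=0$ for $a>d-2$, $p_*(h^{\,a})=0$ for $a<d-2$, and $p_*(h^{\,d-2})=1$, the projection formula leaves exactly the terms with $i+j=d-2$, whence
$$p_*\Omega=\sum_{i=0}^{d-2}\mu_i\,s_\lambda^{(i)}(E)\,c_1(E)^i,$$
which therefore lies in $\overline{\HR}(X)$ by the first paragraph.

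The point I expect to require the most thought is the design of the auxiliary data: one has to twist $p^*E$ by exactly $q^*\mathcal O(1)$ while pairing each linear factor $(\alpha_k+\beta_k t)$ of the P\'olya polynomial against the nef class $\beta_k\,p^*c_1(E)+\alpha_k h$, so that in the fibre integral only the $h^{d-2}$-component survives and reassembles precisely into the $\mu$-weighted sum rather than a sum with spurious factorials (which is what a more naive choice, such as twisting over $(\mathbb P^1)^{d-2}$, produces). Once the right classes are in hand the verification is the short computation above, and the two external inputs — the factorisation of P\'olya frequency sequences and the compatibility of $\overline{\HR}$ with pushforward — are respectively classical and elementary.
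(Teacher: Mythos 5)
Your construction — realising $\sum_i \mu_i s_\lambda^{(i)}(E)c_1(E)^i$ as $p_*\bigl(s_\lambda(\mathcal E)\,\eta_1\cdots\eta_{d-2}\bigr)$ on $X\times\mathbb P^{d-2}$, with the linear factors of the P\'olya polynomial paired against the nef classes $\beta_k p^*c_1(E)+\alpha_k h$ — is correct as a computation and is genuinely different from the paper's route (the paper instead passes through a Bloch--Gieseker covering to extract roots of the relevant line bundles and augments the Fulton--Lazarsfeld cone bundle $U$ to $U\oplus\pi^*F$). However, the step you call ``elementary'' is precisely where the argument breaks. It is \emph{not} true that $\pi_*$ carries $\overline{\HR}(Y)$ into $\overline{\HR}(X)$ for a surjective morphism: $Q_{\pi_*\Omega}$ is the restriction of $Q_\Omega$ to the subspace $\pi^*H^{1,1}(X)$, and the restriction of a form of signature $(+,-,\dots,-)$ to a subspace can be negative semidefinite or degenerate. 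Having \emph{at most} one positive eigenvalue is not the Hodge--Riemann property, which demands exactly one positive eigenvalue and nondegeneracy; a negative definite limit, for instance, lies in neither $\HR(X)$ nor $\overline{\HR}(X)$. This is exactly why the paper's pushforward statements (Lemmas \ref{lem:HRpushforward} and \ref{lem:intermediatesingular}) carry the extra hypothesis $\int_X(\pi_*\Omega)h^2\ge 0$ and conclude only membership in $\wHR(X)$, and why the paper is careful to distinguish $\wHR(X)$ from $\overline{\HR}(X)$ (the former allows approximation by arbitrary bilinear forms, the latter only by forms $Q_{\Omega'}$ coming from actual classes).

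Your argument can be repaired with the paper's own tools, but both missing steps must be supplied. First, to rule out negative semidefiniteness you need $\int_X (p_*\Omega)(h')^2\ge 0$ for some ample $h'$ on $X$; this follows from the monomial Fulton--Lazarsfeld positivity of Remark \ref{rem:monomialsofderivedschurclasses} applied to $\int_Y \Omega\,(p^*h')^2$, and then Lemma \ref{lem:HRpushforward} gives $p_*\Omega\in\wHR(X)$. Second, to upgrade $\wHR(X)$ to $\overline{\HR}(X)$ you need the determinant--perturbation argument from the end of the proof of Theorem \ref{thm:derivedschurclassesareinHR}: replace $E$ by $E\langle th'\rangle$ (equivalently, add $th'$ to each class in sight), observe via Example \ref{ex:lowestdegree} that the top-order term in $t$ of the resulting class $\Omega_t$ on $X$ is a positive multiple of $(h')^{d-2}$, so $f(t)=\det(Q_{\Omega_t})$ is a polynomial in $t$ that is not identically zero; hence $Q_{\Omega_t}$ is nondegenerate, and therefore in $\HR(X)$, for all small rational $t>0$, giving $\Omega_0\in\overline{\HR}(X)$. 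A smaller point: Theorem \ref{thm:monomialsderivedschurclassesareinHR} is stated for genuine nef bundles, so the factors $\eta_k$ (which involve real and possibly irrational $\alpha_k,\beta_k$) should be handled by first taking $\alpha_k,\beta_k$ rational (clearing denominators turns each $\eta_k$ into a positive multiple of $c_1$ of a nef line bundle) and only then passing to the limit in $H^{d-2,d-2}(X;\R)$ using that $\overline{\HR}(X)$ is closed — this mirrors the reduction of Theorem \ref{thm:poyla} to Proposition \ref{prop:poyla:divisor} in the paper, and again requires the $\overline{\HR}$ conclusion (not just $\wHR$) for the rational approximants.
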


\begin{center}
*
\end{center}

As an application of these results we are able to give various new inequalities between characteristic classes of nef vector bundles.   Continuing to assume $X$ is projective of dimension $d$, let $\lambda$ and $\mu$ be partitions of length $|\lambda|$ and $|\mu|$ respectively and assume $|\lambda| + |\mu|\ge d$.

\begin{theorem*}[= Theorem \ref{thm:generalizedKT}]
Assume $E,F$ are nef vector bundles on $X$.     Then the sequence
\begin{equation}\label{eq:generalizedKT}i\mapsto \int_X  s_{\lambda}^{(|\lambda|+|\mu|-d-i)}(E)s_{\mu}^{(i)}(F) \end{equation}
is log-concave
\end{theorem*}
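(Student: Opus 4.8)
The plan is to reduce the statement to a list of three–term inequalities $a_i^2\ge a_{i-1}a_{i+1}$ for the sequence $a_i:=\int_X s_\lambda^{(|\lambda|+|\mu|-d-i)}(E)s_\mu^{(i)}(F)$, and to obtain each such inequality from the Hodge–Riemann property on a suitable auxiliary manifold. Recall the elementary fact that a symmetric bilinear form $Q$ of signature $(+,-,\dots,-)$, or a limit of such forms, satisfies the reverse Cauchy–Schwarz inequality $Q(u,v)^2\ge Q(u,u)Q(v,v)$ whenever $Q(u,u)\ge 0$ or $Q(v,v)\ge 0$ (restrict $Q$ to $\operatorname{span}(u,v)$, where it has at most one positive eigenvalue, so its determinant is $\le 0$ unless it is negative definite, which the sign hypothesis excludes). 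Thus a class $\Omega\in\overline{\HR}(Y)$ together with nef divisor classes $D_1,D_2$ produces $\bigl(\int_Y D_1 D_2\,\Omega\bigr)^2\ge\bigl(\int_Y D_1^2\,\Omega\bigr)\bigl(\int_Y D_2^2\,\Omega\bigr)$, provided the right-hand factors are non-negative. The whole point is to manufacture, for each interior index, such a configuration with the three intersection numbers equal to $a_{i-1},a_i,a_{i+1}$.

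Write $p=|\lambda|$, $q=|\mu|$, $N=p+q-d\ge 0$; for $N\le 1$ the sequence has at most two nonzero terms and log-concavity amounts to their non-negativity, so assume $N\ge 2$. First I would pass to $Y=X\times\mathbb{P}^N\times\mathbb{P}^N$, let $h_1,h_2\in H^{1,1}(Y;\mathbb{R})$ be the pullbacks of the two hyperplane classes, and pull back $E$ and $F$ (still nef). Since the tensor product of a nef vector bundle with a nef line bundle is nef, $E\otimes\mathcal{O}(h_1)$ and $F\otimes\mathcal{O}(h_2)$ are nef on $Y$. By the defining property of the derived Schur polynomials one has $s_\lambda(E\otimes\mathcal{O}(h_1))=\sum_j s_\lambda^{(j)}(E)\,h_1^{\,j}$ and $s_\mu(F\otimes\mathcal{O}(h_2))=\sum_l s_\mu^{(l)}(F)\,h_2^{\,l}$, so set
$$P:=s_\lambda(E\otimes\mathcal{O}(h_1))\,s_\mu(F\otimes\mathcal{O}(h_2))=\sum_{j,l}s_\lambda^{(j)}(E)s_\mu^{(l)}(F)\,h_1^{\,j}h_2^{\,l}.$$
For each $i$, the class $P\,h_1^{\,i}h_2^{\,N-i}$ is homogeneous of degree $p+q+N=\dim Y$, and using $\int_{\mathbb{P}^N}h_j^{N}=1$, $h_j^{N+1}=0$ and Fubini, the only term that survives integration is the one with $j+i=N$, $l+N-i=N$; hence
$$\int_Y P\,h_1^{\,i}h_2^{\,N-i}=\int_X s_\lambda^{(N-i)}(E)s_\mu^{(i)}(F)=a_i.$$

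Now fix an interior index $1\le i\le N-1$ and consider
$$\Omega_i:=P\cdot h_1^{\,i-1}h_2^{\,N-i-1}=s_\lambda(E\otimes\mathcal{O}(h_1))\,s_\mu(F\otimes\mathcal{O}(h_2))\,s_{(1)}(\mathcal{O}(h_1))^{i-1}s_{(1)}(\mathcal{O}(h_2))^{N-i-1}.$$
This is a monomial in Schur classes of the (possibly different) nef bundles $E\otimes\mathcal{O}(h_1)$, $F\otimes\mathcal{O}(h_2)$, $\mathcal{O}(h_1)$, $\mathcal{O}(h_2)$ of total degree $p+q+N-2=\dim Y-2$, so by Theorem \ref{thm:monomialsderivedschurclassesareinHR} we have $\Omega_i\in\overline{\HR}(Y)$. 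Since $h_1^2\Omega_i=P\,h_1^{\,i+1}h_2^{\,N-i-1}$, $h_1h_2\Omega_i=P\,h_1^{\,i}h_2^{\,N-i}$ and $h_2^2\Omega_i=P\,h_1^{\,i-1}h_2^{\,N-i+1}$, the identity above gives $\int_Y h_1^2\Omega_i=a_{i+1}$, $\int_Y h_1h_2\Omega_i=a_i$, $\int_Y h_2^2\Omega_i=a_{i-1}$. Applying the reverse Cauchy–Schwarz inequality for $Q_{\Omega_i}$ to $u=h_2$ and $v=h_1$ — legitimate because $a_{i\pm1}\ge 0$ by the positivity of Schur classes of nef bundles — yields $a_i^2\ge a_{i-1}a_{i+1}$.

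Finally, all $a_i\ge0$ by the same positivity, and the boundary three-term inequalities $a_0^2\ge a_{-1}a_1$, $a_N^2\ge a_{N-1}a_{N+1}$ hold trivially with the convention that $s_\nu^{(j)}=0$ for $j<0$ or $j>|\nu|$; combined with the interior inequalities this shows the sequence has no internal zeros and is log-concave. The routine points to be checked are the degree bookkeeping identifying $a_i$ with the unique surviving term (and $\deg\Omega_i=\dim Y-2$), the preservation of nefness under twisting by $\mathcal{O}(h_j)$, and the continuity passage from the open condition defining $\HR(Y)$ to the reverse Cauchy–Schwarz for classes of $\overline{\HR}(Y)$. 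I expect the only real idea — rather than an obstacle — to be the move to $X\times\mathbb{P}^N\times\mathbb{P}^N$, which converts the two independent ``derived'' parameters of $s_\lambda(E)$ and $s_\mu(F)$ into genuine hyperplane classes and exhibits each term $a_i$ as an intersection number against a Schur-class monomial already known to be Hodge–Riemann.
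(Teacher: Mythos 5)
Your proof is correct and is essentially the paper's own argument: both convert the derived Schur classes into honest Schur classes on a product with projective spaces, apply Theorem \ref{thm:monomialsderivedschurclassesareinHR} to place the resulting monomial in $\overline{\HR}$, and conclude with the Hodge--index inequality against the two hyperplane classes; the only difference is that the paper uses $X\times\mathbb P^{j+1}\times\mathbb P^{i+1}$ with factors sized to each index $i$, whereas you fix $X\times\mathbb P^N\times\mathbb P^N$ and absorb the excess degree as powers of $h_1,h_2$ viewed as $s_{(1)}$ of nef line bundles --- a purely cosmetic variation. One small caveat: your closing claim that the sequence ``has no internal zeros'' does not follow from (and is not needed for) the three-term inequalities, since log-concavity here is defined only as $a_{i-1}a_{i+1}\le a_i^2$.
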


As a particular case, we get that if $E$ is a nef vector bundle and $\lambda$ a partition of $d$, then 
$$j\mapsto \int_X s_{\lambda}^{(j)}(E)c_1(E)^{j} $$
is log-concave, which as a special case says the map
$$i\mapsto \int_X c_i(E)c_1(E)^{d-i} $$
is also log-concave.  One should think of these statements as higher-rank analogs of the Khovanskii-Tessier inequalities.  We even get combinatorial applications of this, such as the following:

\begin{corollary*}[= Corollary \ref{cor:combinatoric}]
Let $\lambda$ and $\mu$ be partitions, and let $d$ be an integer with $d\le |\lambda| + |\mu|$.    Assume  $x_1,\ldots,x_e,y_1,\ldots,y_f\in \mathbb R_{\ge 0}$.   Then the sequence
$$i\mapsto s^{(|\lambda| + |\mu| -d +i)}_{\lambda}(x_1,\ldots,x_e) s_{\mu}^{(i)}(y_1,\ldots,y_f)$$
is log concave.
\end{corollary*}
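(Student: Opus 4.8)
The plan is to deduce Corollary~\ref{cor:combinatoric} from the special case of the Khovanskii--Teissier-type inequality recorded after Theorem~\ref{thm:generalizedKT}, namely that $j\mapsto\int_X s_\lambda^{(j)}(E)\,c_1(E)^j$ is log-concave whenever $E$ is nef on an $n$-dimensional projective manifold and $\lambda$ is a partition of $n$. First I would extract the following purely combinatorial consequence: \emph{for every partition $\lambda$ and every $x=(x_1,\ldots,x_e)\in\R_{\ge0}^e$, the sequence $j\mapsto s_\lambda^{(j)}(x)$ is log-concave with no internal zeros.} For $x\in\Z_{\ge0}^e$ this would follow by applying the above to $X=\P^{|\lambda|}$ and $E=\bigoplus_{k=1}^e\cO(x_k)$, which is nef: its Chern roots are $x_1h,\ldots,x_eh$ with $h$ the hyperplane class, and since $s_\lambda^{(j)}$ is homogeneous of degree $|\lambda|-j$ one has $s_\lambda^{(j)}(E)=h^{|\lambda|-j}s_\lambda^{(j)}(x)$ and $c_1(E)^j=(\sum_k x_k)^j h^j$, so that
$$\int_{\P^{|\lambda|}} s_\lambda^{(j)}(E)\,c_1(E)^j \;=\; \Big(\textstyle\sum_{k}x_k\Big)^j\,s_\lambda^{(j)}(x_1,\ldots,x_e).$$
Since multiplying or dividing a nonnegative sequence by the geometric progression $j\mapsto(\sum_k x_k)^j$ affects neither log-concavity nor the interval-support property, log-concavity of the left-hand side is equivalent to that of $j\mapsto s_\lambda^{(j)}(x)$.

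To pass to arbitrary real arguments I would invoke homogeneity once more: $s_\lambda^{(j)}(cx)=c^{|\lambda|-j}s_\lambda^{(j)}(x)$, so rescaling (another geometric twist) extends the statement to $x\in\Q_{\ge0}^e$, and the closed inequalities $s_\lambda^{(j)}(x)^2\ge s_\lambda^{(j-1)}(x)s_\lambda^{(j+1)}(x)$ then hold for all $x\in\R_{\ge0}^e$ by continuity; the absence of internal zeros for such $x$ is checked directly from the Schur-positivity of the $s_\lambda^{(j)}(x)$, whose support is readily seen to be an interval. This establishes the one-partition claim.

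Finally I would apply the claim to the pairs $(\lambda,x)$ and $(\mu,y)$ and set $c:=|\lambda|+|\mu|-d$, which is nonnegative by hypothesis. The sequence in Corollary~\ref{cor:combinatoric} is then the pointwise product of $i\mapsto s_\lambda^{(c+i)}(x)$ and $i\mapsto s_\mu^{(i)}(y)$. The first is the shift by $c$ of the log-concave, internal-zero-free sequence $j\mapsto s_\lambda^{(j)}(x)$, hence is itself log-concave with no internal zeros; the second has this property by the claim. And the pointwise product of two nonnegative sequences each log-concave with no internal zeros is again such: its support is the intersection of two intervals, hence an interval, and $(a_ib_i)^2=a_i^2b_i^2\ge(a_{i-1}a_{i+1})(b_{i-1}b_{i+1})=(a_{i-1}b_{i-1})(a_{i+1}b_{i+1})$. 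This is exactly the assertion of the corollary.

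The step I expect to carry the weight is the one-partition claim --- specifically, the observation that the $c_1(E)^j$-weighted intersection number above computes the \emph{unweighted} sequence $j\mapsto s_\lambda^{(j)}(x)$ up to the harmless geometric factor; granting that, the descent to real arguments (with the internal-zeros bookkeeping) and the final product step are routine.
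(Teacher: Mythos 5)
Your proof is correct, but it takes a different route from the paper's. The paper deduces Corollary \ref{cor:combinatoric} in one step by feeding $X=\mathbb P^d$, $E=\bigoplus_i\mathcal O_{\mathbb P^d}(x_i)$ and $F=\bigoplus_i\mathcal O_{\mathbb P^d}(y_i)$ directly into the two-bundle Theorem \ref{thm:generalizedKT}, where every class is a multiple of a power of the hyperplane class and the intersection numbers become exactly the products in question (up to the same rational/continuity reduction you use). You instead first prove the single-partition statement --- which is precisely the paper's Corollary \ref{cor:combinatoric2}, proved there \emph{after} this corollary by the same $\mathbb P^{|\lambda|}$ computation you give --- and then observe that the corollary's sequence is a pointwise product of a shift of $j\mapsto s_\lambda^{(j)}(x)$ with $i\mapsto s_\mu^{(i)}(y)$. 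This is a genuinely different decomposition: it shows the two-partition inequality over $\mathbb R_{\ge 0}$ decouples and needs only the one-bundle case (Corollary \ref{cor:Enefandhnef} with $h=c_1(E)$) plus elementary sequence manipulation, whereas the paper's argument uses the full two-bundle theorem but is shorter. One simplification you could make: the ``no internal zeros'' bookkeeping is unnecessary for your final step. For a \emph{pointwise} product (as opposed to a convolution) of nonnegative log-concave sequences, log-concavity follows immediately by multiplying the inequalities $a_i^2\ge a_{i-1}a_{i+1}\ge 0$ and $b_i^2\ge b_{i-1}b_{i+1}\ge 0$, with no hypothesis on the supports; so the one claim you label ``readily seen'' (interval support of $j\mapsto s_\lambda^{(j)}(x)$, which is true, via the monomial expansion of $s_\lambda$ and the fact that each monomial contributes on an interval of $j$'s ending at $|\lambda|$) can simply be dropped.
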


\begin{corollary*}[= Corollary \ref{cor:combinatoric2}]
Let $\lambda$ be a partition and $x_1,\ldots,x_e\in \mathbb R_{\ge 0}$.  Then the sequence
$$i\mapsto s_{\lambda}^{(i)}(x_1,\ldots,x_e)$$
is log-concave.
\end{corollary*}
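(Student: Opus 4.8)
The plan is to deduce this directly from Corollary \ref{cor:combinatoric} by a diagonal choice of the data, followed by one elementary observation about log-concave sequences. In Corollary \ref{cor:combinatoric}, take the second partition $\mu$ equal to $\lambda$, take $f=e$ and $y_j=x_j$ for all $j$, and set $d:=2|\lambda|$. The hypothesis $d\le |\lambda|+|\mu|$ is then satisfied with equality, and the shift occurring in the superscript of the first factor is $|\lambda|+|\mu|-d=0$. Hence Corollary \ref{cor:combinatoric} tells us that
\[
 i\longmapsto s_\lambda^{(i)}(x_1,\ldots,x_e)\, s_\lambda^{(i)}(x_1,\ldots,x_e)=\big(s_\lambda^{(i)}(x_1,\ldots,x_e)\big)^2
\]
is log-concave. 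So it remains only to pass from log-concavity of the sequence of squares back to log-concavity of the sequence itself.

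For this, first note that Schur polynomials are monomial-positive, so $s_\lambda(x_1+t,\ldots,x_e+t)$ has non-negative coefficients as a polynomial in $x_1,\ldots,x_e,t$; therefore each derived Schur polynomial $s_\lambda^{(i)}$ has non-negative coefficients, and in particular $a_i:=s_\lambda^{(i)}(x_1,\ldots,x_e)\ge 0$ since each $x_j\ge 0$. Now if $(c_i)$ is a log-concave sequence of non-negative reals with $c_i=a_i^2$ and $a_i\ge 0$, then for each internal index $i$ we have $a_i^4=c_i^2\ge c_{i-1}c_{i+1}=(a_{i-1}a_{i+1})^2$, and taking square roots of these non-negative quantities gives $a_i^2\ge a_{i-1}a_{i+1}$; moreover $(a_i)$ has an internal zero exactly when $(c_i)$ does. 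Applying this with $c_i=\big(s_\lambda^{(i)}(x_1,\ldots,x_e)\big)^2$ yields the claim.

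I expect there to be essentially no obstacle once Corollary \ref{cor:combinatoric} is granted: the only real content is noticing that the diagonal specialization $\mu=\lambda$, $y=x$, $d=2|\lambda|$ forces the two derived-Schur factors to coincide, collapsing the statement to the square of the desired sequence. If instead one wanted a self-contained argument, the work would be in establishing the universal polynomial version of Theorem \ref{thm:generalizedKT} — i.e.\ realizing arbitrary non-negative reals $x_j$ as Chern roots of nef line-bundle summands of a nef bundle on a suitable projective manifold (for instance a product of projective spaces of sufficiently large dimension) and then specializing — but this is precisely what is already packaged into Corollary \ref{cor:combinatoric}, so from that starting point the proof is short.
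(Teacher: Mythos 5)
Your reduction hinges on the superscript $|\lambda|+|\mu|-d+i$ in Corollary \ref{cor:combinatoric} really being a $+i$, and that is exactly where the argument breaks. Corollary \ref{cor:combinatoric} is deduced from Theorem \ref{thm:generalizedKT} by evaluating on $X=\mathbb P^{d}$ with $E,F$ direct sums of line bundles, and there the quantity $\int_X s_{\lambda}^{(j)}(E)\,s_{\mu}^{(i)}(F)$ is nonzero only when the degrees satisfy $(|\lambda|-j)+(|\mu|-i)=d$, i.e.\ $j=|\lambda|+|\mu|-d-i$: the two superscripts must move in \emph{opposite} directions, with $i+j$ constant. This is the indexing in Theorem \ref{thm:generalizedKT} itself, and the $+i$ printed in Corollary \ref{cor:combinatoric} is a sign slip — what its proof actually establishes is the $-i$ version. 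With that sign, your diagonal specialization $\mu=\lambda$, $y=x$, $d=2|\lambda|$ produces the sequence $i\mapsto s_{\lambda}^{(-i)}(x)\,s_{\lambda}^{(i)}(x)$, which by the convention $s_{\lambda}^{(k)}=0$ for $k<0$ vanishes for all $i\neq 0$ and carries no information; no choice of $d$, $\mu$, $y$ in Theorem \ref{thm:generalizedKT} can make both superscripts equal to $i$, so the square $\bigl(s_{\lambda}^{(i)}(x)\bigr)^2$ is not reachable this way. (The $+i$ statement is in fact true, but the natural proof of it is to apply Corollary \ref{cor:combinatoric2} to each factor separately and multiply the two log-concave sequences — which would make your argument circular.)

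Your second step is fine: $s_{\lambda}^{(i)}$ has non-negative coefficients, so $a_i:=s_{\lambda}^{(i)}(x)\ge 0$, and log-concavity of $(a_i^2)$ implies log-concavity of $(a_i)$ by taking square roots (the paper's definition of log-concavity imposes no condition on internal zeros, so that caveat is not needed). What the paper does instead is pair the decreasing superscript with a factor that cancels rather than with a second copy of $s_{\lambda}$: it applies Corollary \ref{cor:Enefandhnef} with $X=\mathbb P^{|\lambda|}$, $E=\bigoplus_j\mathcal O(x_j)$ and $h=c_1(E)$, obtaining log-concavity of $i\mapsto s_{\lambda}^{(|\lambda|-i)}(x)\,S^{\,|\lambda|-i}$ where $S=\sum_j x_j$; since $S^{\,|\lambda|-i+1}\cdot S^{\,|\lambda|-i-1}=S^{\,2(|\lambda|-i)}$, the geometric factor divides out of the inequality $a_{i-1}a_{i+1}\le a_i^2$, leaving the claim. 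If you want to salvage your route, you should replace the diagonal choice by this one (or prove the $+i$ version of Corollary \ref{cor:combinatoric} independently, which the paper does not do).
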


This last statement has been known for a long time for the partition $\lambda=(e)$, for then the derived Schur polynomials become the elementary symmetric polynomials $c_i$ (see Example \ref{ex:Chernclasses}).  Then more is true namely, $i\mapsto c_i(x_1,\ldots,x_e)$ is ultra-log concave - a result which is due to Newton \cite{Newton} (see, for example, \cite[Chap.\ 11]{Cvetkovski} for a modern treatment).

\medskip

As a final application we show how knowing that Schur classes of nef bundles lie in $\overline{\HR}(X)$ gives another proof of a result of Huh-Matherne-M\'esz\'aros-Dizier \cite{Huhetal} that the normalized Schur polynomials are Lorentzian.

\subsection{Comparison with  previous work: } There is some overlap between Theorem \ref{thm:derivedschurclassesareinHR} and our original work on the subject in \cite{RossTomaHR}.  A principal difference is that in \cite{RossTomaHR} we show that derived Schur classes of ample bundles have the Hodge-Riemann property,  whereas here we settle in merely showing these classes are limits of classes with this property.   So even though logically many of our results follow from \cite{RossTomaHR}, the proofs we give here are simpler and substantially shorter.  In fact, our account here does not depend on any of the parts of  \cite{RossTomaHR}  and is self-contained relying only on a few standard techniques in the field (as contained say in \cite{Lazbook2}).   The main tools we use are the Bloch-Gieseker theorem, and the cone classes of Fulton-Lazarsfeld that express Schur classes as pushforwards of certain Chern classes (which builds on the determinantal formula of Kempf-Laksov \cite{Kempf}) .   The material on the non-projective case in \S\ref{sec:Kahler}, on convex combinations in \S\ref{sec:combinations} and on inequalities in \S\ref{sec:inequalities} is all new. 

We refer the reader to \cite{RossTomaHR} for a survey of other works concerning Hodge-Riemann classes.   Although there are many places in which log-convexity and Schur polynomials meet (e.g. \cite{Chen,Okounkov-logconcave,Huhetal,Lam,Gao,Richards}) we are not aware of any previous inequalities that cover precisely those studied here.

\subsection{Organization of the paper: }  \S\ref{sec:notationandconvention}, \S\ref{sec:derivedschurclasses} and \S\ref{sec:coneclasses} contain preliminary material on Schur polynomials, derived Schur polynomials and cone classes. We also include in  \S\ref{sec:fulton-lazarsfeld} a self-contained proof of a theorem of Fulton-Lazarsfeld concerning positivity of (derived) Schur polynomials.    The main theorems about derived Schur classes having the Hodge-Riemann property is proved in  \S\ref{sec:schurclassesareinHR}, and in  \S\ref{sec:Kahler} we explain how this extends to the non-projective case.    In  \S\ref{sec:combinations} we consider convex combinations of Hodge-Riemann classes, and in  \S\ref{sec:inequalities} we give our application to inequalities and our proof that normalized Schur polynomials are Lorentzian.

\subsection{Acknowledgments: } The first author thanks Ivan Cheltsov and Jinhun Park for their hospitality in Pohang and for the opportunity to present this work at the Moscow-Shanghai-Pohang Birational Geometry, K\"ahler-Einstein Metrics and Degenerations conference.   The first author is supported by NSF grants DMS-1707661 and DMS1749447.

\section{Notation and convention}\label{sec:notationandconvention}

We work throughout over the complex numbers.   For the majority of the paper we will take $X$ to be a projective manifold (which we always assume is connected), and $E$ a vector bundle (which we always assume to be algebraic).  Given such a vector bundle $E$ we denote by $\pi: \mathbb P(E)\to X$ the space of one-dimensional quotients of $E$, and by $\pi:\mathbb P_{sub}(E)\to X$ the space of one-dimensional subspaces of $E$.   We say that a vector bundle $E$ is ample (resp.\ nef) if the hyperplane bundle $\mathcal O_{\mathbb P(E)}(1)$ on $\mathbb P(E)$ is ample (resp.\ nef).  

We will make use of the formalism of $\mathbb Q$-twisted bundles 
(see \cite[Section 6.2, 8.1.A]{Lazbook2}, \cite[p457]{Miyoka}).  Given a vector bundle $E$ on $X$ of rank $e$ and an element $\delta \in N^1(X)_{\mathbb Q}$ the $\mathbb Q$-twisted bundle denoted $E\langle \delta \rangle$ is a formal object understood to have Chern classes defined by the rule
\begin{equation}c_p(E\langle \delta \rangle) := \sum_{k=0}^p\binom{e-k}{p-k} c_k(E) \delta^{p-k} \text{ for } 0\le p\le e.\label{eq:defcherntwisted}\end{equation}
Here and henceforth we abuse notation  and write $\delta$ also for its image under $N^1(X)_{\mathbb Q} \to H^2(X;\mathbb Q)$, so the above intersection is taking place in the cohomology ring $H^*(X)$.

By the rank of $E\langle \delta \rangle$ we mean the rank of $E$.   The above definition is made so if $\delta= c_1(L)$ for a line bundle $L$ on $X$ then $$c_p(E\langle c_1(L)\rangle) = c_p(E\otimes L).$$   If $E$ has Chern roots given by $x_1,\ldots,x_e$ then $E\langle \delta\rangle$ is understood to have Chern roots $x_1+\delta,\ldots,x_e+\delta$.    The twist of an $\mathbb Q$-twisted bundle is given by the rule $E\langle \delta \rangle \langle \delta' \rangle = E\langle \delta + \delta' \rangle$.   That \eqref{eq:defcherntwisted} continues to hold when $E$ is an $\mathbb Q$-twisted bundle is an elementary calculation  - for convenience of the reader, we omit the proof.
  
We say that $E\langle \delta \rangle$  is ample (resp.\ nef) if the class $c_1(\mathcal O_{\mathbb P(E)}(1)) + \pi^* \delta$ is ample (resp.\ nef) on $\mathbb P(E)$.  

Suppose $p(x_1,\ldots,x_e)$ is a homogeneous symmetric polynomial of degree $d'$ and $E$ is a $\mathbb Q$-twisted vector bundle of rank $E$ on $X$ with Chern roots $\tau_1,\ldots,\tau_e$.  Then we have the well-defined characteristic class $$p(E): = p(\tau_1,\ldots,\tau_e) \in H^{d',d'}(X;\mathbb R).$$

By abuse of notation we let $c_i$ denote the $i$th elementary symmetric polynomial, so $c_i(E)\in H^{i,i}(X;\mathbb R)$ is unambiguously defined as the $i$th-Chern class of $E$.

\section{Derived Schur Classes}\label{sec:derivedschurclasses}
By a partition $\lambda$ of an integer $b\ge 1$ we mean a sequence $0\le \lambda_N\le \cdots \le \lambda_1$ such that $|\lambda|:=\sum_{i} \lambda_i =b$.       For such a partition, the Schur polynomial $s_{\lambda}$ is the symmetric polynomial of degree $|\lambda|$ in $e\ge 1$ variables given by
$$s_{\lambda} = \det \left(\begin{array}{ccccc} c_{\lambda_1} & c_{\lambda_1+1} &\cdots &c_{\lambda_1 +N-1}\\
c_{\lambda_2-1} & c_{\lambda_2} &\cdots &c_{\lambda_2 +N-2}\\
\vdots & \vdots & \vdots & \vdots\\
c_{\lambda_N-N+1} & c_{\lambda_N -N +2}&\cdots &c_{\lambda_N}\\
\end{array}\right)$$
where $c_i$ denotes the $i$-th elementary symmetric polynomial.    

 We will have use for the following symmetric polynomials associated to Schur polynomials.

\begin{definition}\label{def:derivedschur}[Derived Schur polynomials]
Let $\lambda$ be a partition.  For any $e\ge 1$  we define $s_{\lambda}^{(i)}(x_1,\ldots,x_e)$ for $i=0,\ldots,|\lambda|$ by requiring that
$$ s_{\lambda}(x_1+t,\ldots,x_e+t) = \sum_{i=0}^{|\lambda|} s_{\lambda}^{(i)}(x_1,\ldots,x_e) t^i \text{ for all } t\in \mathbb R.$$

\end{definition}

In fact $s_{\lambda}^{(i)}$ depends also on $e$ but we drop that from the notation.   By convention we set  $s_{\lambda}^{(i)}=0$ for $i\notin \{0,\ldots,|\lambda|\}$.   For $0\le i\le |\lambda|$, clearly  $s_{\lambda}^{(i)}$ is a homogeneous symmetric polynomial of degree $|\lambda|-i$ and  $s_{\lambda}^{(0)} = s_{\lambda}$.   

Thus for any $\mathbb Q$-twisted vector bundle $E$ of rank $e$ we have classes
$$s_{\lambda}^{(i)}(E)\in H^{|\lambda|-i,|\lambda|-i}(X;\mathbb R),$$ 
and by construction if $\delta \in N^1(X)_{\mathbb Q}$ then
$$ s_{\lambda}(E\langle \delta\rangle) = \sum_{i=0}^{|\lambda|} s_{\lambda}^{(i)}(E) \delta^i.$$

\begin{example}[Chern classes]\label{ex:Chernclasses}
Consider the  partition of $\lambda = (p)$ consisting of just one integer.    Then $s_{\lambda}=c_p$, and from standard properties of Chern classes of a tensor product if $\rank E=e\ge p$ then
$$s_{\lambda}^{(i)}(E) = \binom{e-p+i}{i} c_{p-i}(E)\text{ for all } 0\le i\le p.$$
\end{example}

\begin{example}[Derived Schur polynomials of Low degree]
We list some of the derived Schur classes of low degree for a bundle $E$ of rank $e$.  First 
$$s_{(1)} = c_1, \quad s_{(1)}^{(1)} = e$$
and for $e\ge 2$,
\begin{align*}
s_{(2,0)} &= c_2 & s_{(2,0)}^{(1)} &= (e-1) c_1 & s_{(2,0)}^{(2)}&=\binom{e}{2}\\
s_{(1,1)} &= c_1^2 -c_2,& s_{(1,1)}^{(1)}&=(e+1)c_1 & s_{(1,1)}^{(2)}&= \binom{e+1}{2}
\end{align*}
and for $e\ge 3$,
\begin{align*}
s_{(3,0,0)} &= c_3 & s_{(3,0,0)}^{(1)} &= (e-2)c_2 & s_{(3,0,0)}^{(2)}&= \binom{e-1}{2} c_1\\
&&&& s_{(3,0,0)}^{(3)}&=\binom{e}{3} \\
s_{(2,1,0)}&= c_1c_2 -c_3&  s_{(2,1,0)}^{(1)}&= 2c_2 + (e-1)c_1^2 & s_{(2,1,0)}^{(2)}&=(e^2-1) c_1 \\
&&&&  s_{(2,1,0)}^{(3)}&=2\binom{e+1}{3}\\
s_{(1,1,1)}&= c_1^3 -2c_1c_2 + c_3 & s_{(1,1,1)}^{(1)}&=(e+2) (c_1^2-c_2) &s_{(1,1,1)}^{(2)}&= \binom{e+2}{2} c_1 \\
&&&&s_{(1,1,1)}^{(3)}&= \binom{e+2}{3}
\end{align*}
\end{example}

\begin{example}[Lowest Degree Derived Schur Classes]\label{ex:lowestdegree}
Suppose $e\ge \lambda_1$.  Then we can write the Schur polynomial as a sum of monomials 
$$s_{\lambda} (x_1,\ldots,x_e)= \sum_{|\alpha| = |\lambda|} c_\alpha x_1^{\alpha_1} \cdots x_e^{\alpha_e}$$
where  $c_{\alpha}\ge 0$ for all $\alpha$ (in fact the $c_{\alpha}$ count the number of semistandard Young tableaux of weight $\alpha$ whose shape is conjugate to $\lambda$).  Since $e\ge \lambda_1$, $s_{\lambda}$ is not identically zero, so at least one of the $c_{\alpha}$ is strictly positive.  Thus in the expansion
$$s_{\lambda}(x_1+t,\ldots, x_e+t)=\sum_{i=0}^{|\lambda|} s_{\lambda}^{(i)}(x_1,\ldots,x_e) t^i $$
the coefficient in front of $t^{|\lambda|}$ is strictly positive, i.e.  $s_{\lambda}^{(|\lambda|)}>0$.   

So, in terms of characteristic classes, if $E$ has rank at least $\lambda_1$  then $$s_{\lambda}^{(|\lambda|)}(E) \in H^{0}(X;\mathbb R)=\mathbb R$$ is strictly positive. 
\end{example}

\section{Cone Classes}\label{sec:coneclasses}

We will rely on a construction exploited by Fulton-Lazarsfeld that express Schur classes as the pushforward of Chern classes, and we include a brief description here.    Let $E$ be a vector bundle of rank $e$ on $X$ of dimension $d$ and suppose $0\le \lambda_N\le \lambda_{N-1} \le \cdots \le \lambda_1$ is a partition of length $|\lambda| = b\ge 1$ and $\lambda_1\ge e$. 
  Set $a_i: = e + i -\lambda_i$ and fix a vector space $V$ of dimension $e+N$.  Then it is possible to find a nested sequence of subspaces $0\subsetneq A_1\subsetneq A_{2} \subsetneq\cdots \subsetneq A_N\subset V$ with $\dim(A_i) = a_i$.

We set $F: = V^*\otimes E=\Hom(V,E)$ and let $f + 1 = \rank(F) = e(e+N)$.   Then inside $F$ define
$$\hat{C} : = \{ \sigma\in \Hom(V,E): \dim \ker(\sigma(x))\cap A_i \ge i \text{ for all } i=1,\ldots, N \text{ and } x\in X\}$$
which is a cone in $F$.
Finally set $$C = [\hat{C}] \subset \mathbb P_{\sub}(F).$$

\begin{proposition}\label{prop:pushforwardschur}
$C$ has codimension $b$ and dimension $d+f-b$, has irreducible fibers over $X$ and is flat over $X$ (in fact it is locally a product).  Moreover if 
\begin{equation}0 \to \mathcal O_{\mathbb P_{\sub}(F)}(-1) \to \pi^* F \to U \to 0\label{eq:tautologicalsequence}\end{equation} 
is the tautological sequence then
\begin{equation}s_{\lambda}(E) = \pi_* c_{f}(U|_C).\label{eq:pushforwasschur}\end{equation}
\end{proposition}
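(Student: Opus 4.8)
The plan is to reproduce the construction of Fulton--Lazarsfeld, which in turn packages the determinantal formula of Kempf--Laksov \cite{Kempf}. The geometric assertions (dimension, irreducible and flat fibres) I would obtain from a fibrewise analysis of the cone $\hat C$, and the identity $s_\lambda(E) = \pi_* c_f(U|_C)$ from globalising a resolution of $\hat C$ by a flag bundle and then pushing forward Chern classes through a tower of projective bundles. I expect the only genuinely delicate point to be a transversality statement --- that $\hat C$ has the expected dimension and that the resolution is generically one-to-one --- with everything else, including the tracking of signs and conventions in the iterated pushforward, being bookkeeping.

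For the geometric part, I would first observe that the conditions cutting out $\hat C$ involve only $E$ and the fixed linear data $(V, A_\bullet)$ and are linear-algebraic in the fibres of $E$. Hence over any open subset of $X$ on which $E$ is trivialised, $\hat C$ is literally the product of that subset with the model cone
\[ \hat C_0 = \{\sigma\in\Hom(V,\C^e)\ :\ \dim(\ker\sigma\cap A_i)\ge i\ \text{ for } i=1,\dots,N\}. \]
This already gives the local product structure over $X$, hence flatness, and reduces the remaining claims to showing that $\hat C_0$ is irreducible of codimension $b$ in $\Hom(V,\C^e)$. For this, let $\mathrm{Fl}$ be the variety of flags $K_1\subset\dots\subset K_N$ with $\dim K_i = i$ and $K_i\subseteq A_i$; choosing $K_1\in\mathbb{P}_{\sub}(A_1)$, then $K_2/K_1\in\mathbb{P}_{\sub}(A_2/K_1)$, and so on, exhibits $\mathrm{Fl}$ as a tower of projective bundles over a point, hence smooth and irreducible of dimension $\sum_i(a_i - i) = Ne - b$. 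I would then introduce the incidence variety $\widetilde C_0 = \{(\sigma,K_\bullet)\in\Hom(V,\C^e)\times\mathrm{Fl}\ :\ \sigma|_{K_N}=0\}$, which is a vector bundle over $\mathrm{Fl}$ with fibre $\Hom(V/K_N,\C^e)$ and so is irreducible of dimension $(Ne - b) + e^2$. Its projection to $\Hom(V,\C^e)$ has image exactly $\hat C_0$ --- the inclusion $\subseteq$ is immediate, and conversely, for $\sigma\in\hat C_0$ one builds $K_i\subseteq\ker\sigma\cap A_i$ inductively --- and it is birational onto $\hat C_0$, since over the dense open locus of $\hat C_0$ on which all the defining inequalities are equalities one has $K_i = \ker\sigma\cap A_i$, and this locus is nonempty because one can exhibit a single $\sigma$ for which each restriction $\sigma|_{A_i}$ has the maximal possible rank $a_i - i$. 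Therefore $\hat C_0$ is irreducible of dimension $e^2 + Ne - b$, i.e.\ of codimension $b$ in $\Hom(V,\C^e)$, whence $C$ has codimension $b$, dimension $d + f - b$, and irreducible fibres over $X$.

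For the Chern class identity I would globalise this resolution. Over $\mathrm{Fl}\times X$, write $h\colon\mathrm{Fl}\times X\to X$ for the projection, let $\mathcal K_1\subset\dots\subset\mathcal K_N\subset V\otimes\mathcal O$ be the pulled-back tautological flag, and set $\mathcal E := \Hom\big((V\otimes\mathcal O)/\mathcal K_N,\ h^*E\big)$, a subbundle of $h^*F = \Hom(V\otimes\mathcal O, h^*E)$ of rank $e^2$ whose quotient is $\mathcal K_N^\vee\otimes h^*E$, of rank $Ne$. Then $\mathbb{P}_{\sub}(\mathcal E)\subseteq\mathbb{P}_{\sub}(h^*F) = \mathrm{Fl}\times\mathbb{P}_{\sub}(F)$, and composing with the second projection gives $\rho\colon\mathbb{P}_{\sub}(\mathcal E)\to\mathbb{P}_{\sub}(F)$ whose image is $C$; by the fibrewise analysis $\rho$ is birational onto $C$, and $\rho^*\mathcal O_{\mathbb{P}_{\sub}(F)}(-1) = \mathcal O_{\mathbb{P}_{\sub}(\mathcal E)}(-1)$. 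Letting $g\colon\mathbb{P}_{\sub}(\mathcal E)\to\mathrm{Fl}\times X$ be the bundle projection, the projection formula and birationality give $\pi_* c_f(U|_C) = (h\circ g)_* c_f(\rho^*U)$. Now $\mathcal O(-1)\subseteq g^*\mathcal E\subseteq (h\circ g)^*F$ exhibits $\rho^*U = (h\circ g)^*F/\mathcal O(-1)$ as an extension $0\to\mathcal Q\to\rho^*U\to\mathcal K_N^\vee\otimes E\to 0$, where $\mathcal Q = g^*\mathcal E/\mathcal O(-1)$ is the tautological quotient on $\mathbb{P}_{\sub}(\mathcal E)$, of rank $e^2-1$; since $(e^2-1) + Ne = f$, comparing ranks forces $c_f(\rho^*U) = c_{e^2-1}(\mathcal Q)\cdot c_{Ne}(\mathcal K_N^\vee\otimes E)$. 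By the projective bundle formula $g_* c_{e^2-1}(\mathcal Q) = 1$, so
\[ \pi_* c_f(U|_C) = h_*\, c_{Ne}(\mathcal K_N^\vee\otimes E) = h_* \prod_{i=1}^{N}\prod_{k=1}^{e}(\xi_k - \ell_i), \]
where $\xi_1,\dots,\xi_e$ are the Chern roots of $E$ and $\ell_i = c_1(\mathcal K_i/\mathcal K_{i-1})$. The final move is to integrate the $\ell_i$ out one at a time through the projective-bundle tower defining $\mathrm{Fl}$; because each $A_j$ is a trivial bundle, the Segre classes that arise are simply the Chern classes of the subflag $\mathcal K_{j-1}$, and the iterated pushforward collapses to $\det\big(c_{\lambda_i - i + j}(E)\big)_{1\le i,j\le N} = s_\lambda(E)$. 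This last formal identity is the heart of the Kempf--Laksov determinantal formula \cite{Kempf}, and, together with the transversality input above, it is the only step where I expect real effort to be needed.
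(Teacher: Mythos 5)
Your proposal is correct and is essentially the argument the paper delegates to its references: the paper's own ``proof'' of Proposition \ref{prop:pushforwardschur} is just a pointer to Fulton--Lazarsfeld, to \cite[(8.12)]{Lazbook2} and to \cite[Proposition 5.1]{RossTomaHR}, and what you have written out (the local product structure over $X$, the flag-bundle resolution of the model cone giving irreducibility and codimension $b$, and the pushforward through $\mathbb P_{\sub}(\mathcal E)$ reducing $\pi_* c_f(U|_C)$ to the Kempf--Laksov determinant) is precisely the standard construction contained in those sources.
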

\begin{proof} This is described by Fulton-Lazarsfeld in \cite{FultonLazarsfeld}.   An account (that is written for the the case $|\lambda| = d$) can be found in \cite[(8.12)]{Lazbook2} and an account for general $|\lambda|$ is given in \cite[Proposition 5.1]{RossTomaHR} that is based on \cite{FultonIT}.   We remark that in \cite[Proposition 5.1]{RossTomaHR} we made the additional assumption that $N\ge b$ and $e\ge 2$, but have since realized these are not necessary (we used this to ensure that $f\ge b$, but this actually follows immediately from $e\ge \lambda_1$).
\end{proof}

This extends to $\mathbb Q$-twisted bundles $E' = E\langle \delta\rangle$.  Here we identify $$P':=\mathbb P_{\sub}(F\langle \delta \rangle)\stackrel{\pi}{\to} X$$ with $\mathbb P_{\sub}(F)\stackrel{\pi}{\to} X$ but the quotient bundle $U$ on $P'$ is replaced by $U': =U \langle \pi^* \delta\rangle$.  We consider the same cone $[C]\subset P'$.       Then \eqref{eq:pushforwasschur} still holds in the sense that
\begin{equation}s_{\lambda}(E') = \pi_* c_{f}(U'|_C).\label{eq:pushforwasschur:twisted}\end{equation}
To see this, observe that as $\delta\in N^1(X)_{\mathbb Q}$ we have $\delta =\frac{1}{m} c_1(L)$ for some $m\in \mathbb Z$ and line bundle $L$.  Then for $t$ divisible by $m$ 
\begin{equation}\pi_* c_{f}(U\langle t\pi^* \delta\rangle)  = \pi_* c_{f}(U \otimes \pi^* L^{\frac{t}{m}}) =  s_{\lambda} (E\otimes L^{\frac{t}{m}}) = s_{\lambda}(E\langle t\delta\rangle)\label{eq:pushforwasschurt}\end{equation}
where the second equality uses \eqref{eq:pushforwasschur}.  But both sides of \eqref{eq:pushforwasschurt} are polynomials in $t$, so since this equality holds for infinitely many $t$ it must hold for all $t\in \mathbb Q$, in particular when $t=1$ which gives \eqref{eq:pushforwasschur:twisted}.


A key feature we will rely on is that if $E'$ is assumed to be nef then so is $U'$.  For if $E'$ is nef then so is $F':=F\langle \delta\rangle$ and the formal surjection $F'\to U'$ coming from \eqref{eq:tautologicalsequence} implies that $U'$ is also nef (see \cite[Lemma 6.2.8]{Lazbook2} for these properties of nef $\mathbb Q$-twisted bundles).\medskip

Another extension is to the product of Schur classes of possibly different vector bundles $E_1,\ldots,E_p$ on $X$.    Let $\lambda^{1},\ldots,\lambda^{p}$ be partitions and assume $\rank(E_j)\ge\lambda_1^j$ for  $j=1,\ldots,p$.  We consider again the corresponding cones $C_i$ that sit inside  $F_i:=\Hom(V_i,E_i)$ for some vector space $V_i$.   We may consider the fiber product $C:=C_1\times_X C_2 \times_X \cdots \times_X C_p$ inside $\oplus_j \Hom(V_i,E_i)=:F$ and its projectivization $[C]\subset \mathbb P_{\sub}(F)$.  Then, using that each $C_i$ is flat over $X$,   if $U$ is the tautological vector bundle on $ \mathbb P_{\sub}(F)$ of rank $f$ we have
\begin{equation}\pi_* c_{f}(U|_C) = \prod_j s_{\lambda^{j}}(E_j)\label{eq:productschur}\end{equation}
(see \cite[8.1.19]{Lazbook2}, \cite[Sec 3c]{FultonLazarsfeld}).

\section{Fulton-Lazarsfeld Positivity}\label{sec:fulton-lazarsfeld} 
Using the cone construction we quickly get the following positivity statement, which is essentially a weak version of a result of Fulton-Lazarsfeld \cite{FultonLazarsfeld}.  For the reader's convenience we include the short proof here.

\begin{proposition}\label{prop:fultonlazderivedshur}Let $X$ be smooth and projective of dimension $d$, $\lambda$ be a partition of length $d+i$ for some $i\ge 0$ and $E$ be an $\mathbb Q$-twisted nef vector bundle.   Then $\int_X s_{\lambda}^{(i)}(E)\ge 0$. \end{proposition}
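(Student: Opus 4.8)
The plan is to combine the cone-class description of Schur classes (Proposition~\ref{prop:pushforwardschur} and its $\mathbb Q$-twisted form~\eqref{eq:pushforwasschur:twisted}) with a standard device for reducing the ``derived'' statement to the undifferentiated one. First I would reduce to the case $i=0$: set $Y:=X\times\mathbb P^i$ (with $\mathbb P^0$ a point), let $q\colon Y\to X$ be the first projection, and let $\eta\in H^{1,1}(Y;\R)$ be the pullback of the hyperplane class of $\mathbb P^i$, so that $\eta$ is nef, $q_*(\eta^i)=1$, and $q_*(\eta^j)=0$ for $j\neq i$. Then $E':=(q^*E)\langle\eta\rangle$ is a nef $\mathbb Q$-twisted bundle on $Y$ (both $q^*E$ and $\eta$ being nef), of dimension $d+i=|\lambda|$. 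Since $s_\lambda(E')=\sum_j q^*\!\big(s_\lambda^{(j)}(E)\big)\,\eta^j$, the projection formula gives $q_*\!\big(s_\lambda(E')\big)=s_\lambda^{(i)}(E)$, and hence $\int_X s_\lambda^{(i)}(E)=\int_Y s_\lambda(E')$. So it is enough to prove $\int_Y s_\lambda(E')\ge 0$, i.e.\ the statement when the partition has size equal to the dimension.

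For that I would work on $Y$ with $E'$. If $\lambda_1>\rank(E)$ then $c_m(E')=0$ for all $m>\rank(E)$, the determinant defining $s_\lambda(E')$ has a row of zeros, and $s_\lambda(E')=0$; so assume $\lambda_1\le\rank(E)$. Then the cone construction of \S\ref{sec:coneclasses} applies to $E'$ and produces an irreducible projective variety $C\subseteq\mathbb P_{\sub}(\Hom(V,E'))$ of dimension $\dim Y+f-|\lambda|=f$, together with the tautological quotient $U$ of rank $f$ from~\eqref{eq:tautologicalsequence}, such that $s_\lambda(E')=\pi_*c_f(U|_C)$; and, as remarked just after~\eqref{eq:pushforwasschur:twisted}, the nefness of $E'$ makes $U$ (a quotient of $\pi^*\Hom(V,E')\cong\pi^*(E')^{\oplus\dim V}$) nef. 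Therefore $\int_Y s_\lambda(E')=\int_C c_f(U|_C)$, where $U|_C$ is a nef vector bundle of rank $f$ on the $f$-dimensional projective variety $C$.

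It then remains to prove the following, which one applies with $\mathcal G=U|_C$, $r=f$, $Z=C$: if $\mathcal G$ is a nef vector bundle of rank $r$ on a projective variety $Z$ of dimension $r$, then $\int_Z c_r(\mathcal G)\ge 0$. After pulling back along a resolution one may take $Z$ smooth. Fix an ample line bundle $A$ on $Z$; for each $\epsilon\in\Q_{>0}$ the $\mathbb Q$-twisted bundle $\mathcal G\langle\epsilon\,c_1(A)\rangle$ is ample (a nef bundle twisted by an ample class), so $\int_Z c_r(\mathcal G\langle\epsilon\,c_1(A)\rangle)>0$ by the Bloch-Gieseker theorem on positivity of the top Chern class of an ample bundle (applied, after the usual covering trick, to absorb the $\mathbb Q$-twist). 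Since $\epsilon\mapsto\int_Z c_r(\mathcal G\langle\epsilon\,c_1(A)\rangle)$ is a polynomial that is positive on the dense subset $\Q_{>0}$ of $[0,\infty)$, its value $\int_Z c_r(\mathcal G)$ at $\epsilon=0$ is $\ge 0$ by continuity.

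The step I expect to be the real obstacle is this last one: unlike the rest of the argument, which is bookkeeping with the cone construction and with cohomological degrees, the nonnegativity of the top Chern class of a nef bundle is a genuine positivity input, and I would route it through the Bloch-Gieseker theorem together with perturbation to the ample case and a limiting argument.
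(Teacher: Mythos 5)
Your proposal is correct and follows essentially the same route as the paper: reduce the derived case to $|\lambda|=\dim$ via the product with $\mathbb P^i$ and a twist by the hyperplane class, use the Fulton--Lazarsfeld cone construction to rewrite $\int s_\lambda$ as the top Chern number of a nef bundle on the ($f$-dimensional) cone, and establish nonnegativity of that Chern number by resolving, perturbing by an ample class, and invoking Bloch--Gieseker together with a polynomial-in-$t$ limiting argument. The only cosmetic differences are the order of the two reductions and that the paper derives positivity for $t>0$ from Bloch--Gieseker \emph{nonvanishing} plus the sign at $t\gg 0$, rather than quoting positivity for ample bundles directly.
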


\begin{proof}

We first claim that if $E$ is a nef $\mathbb Q$-twisted bundle of rank $d$ on an irreducible projective variety $X$ of dimension $d$ then $\int_X c_d(E)\ge 0$.   By taking a resolution we may assume $X$ is smooth.   Let $h$ be an ample class on $X$.  By the Bloch-Gieseker Theorem \cite{BlochGieseker} we have $\int_X c_{d}(E\langle t h\rangle) \neq 0$ for all $t>0$ since $E\langle t h\rangle$ is ample (here we allow $t$ to be irrational extending the notation in the obvious way, and observe that although the original Bloch Gieseker result is not stated for twisted bundles the same proof works in this setting, see \cite[p113]{Lazbook2} or \S\ref{sec:Kahler}).  Expanding this as a polynomial in $t$ this gives 
 $$0\neq \int_X c_{d}(E) + t c_{d-1} (E) h + \cdots + t^d h^d \text{ for all  } t\in \mathbb R_{>0}.$$
Clearly this polynomial is strictly positive for $t\gg 0$, and hence since it is nowhere-vanishing, is strictly positive for all $t>0$.  In particular $\int_{X} c_d(E)\ge 0$ as claimed.

To prove the Proposition, we may assume $e:=\rank(E)\ge \lambda_1$ else $s_{\lambda}(E)=0$ and the statement is trivial.  When $|\lambda|=d$,  \eqref{eq:pushforwasschur:twisted} gives a map $\pi:C\to X$ from an irreducible variety $C$ of dimension $n$ and a nef $\mathbb Q$-twisted bundle $U$ of rank $n$ so that $\pi_* c_{n}(U) = s_{\lambda}(E)$.  So by the previous paragraph $\int_X s_{\lambda}(E) = \int_C c_{n}(U)\ge 0.$

Finally suppose $i\ge 0$ and $|\lambda|=d+i$.   Set $\hat{X} = X\times \mathbb P^i$ and $\tau = c_1(\mathcal O_{\mathbb P^1}(1))$.    Since $|\lambda| = \dim(\hat{X})$ we have
$$0\le \int_{\hat{X}} s_{\lambda} (E\langle \tau\rangle) = \int_{\hat{X}} \sum_{j=0}^{|\lambda|+i} s_{\lambda}^{(j)}(E)\tau^j =\int_X s_{\lambda}^{(i)}(E) \int_{\mathbb P^i} \tau^i = \int_X s_{\lambda}^{(i)}(E).$$
\end{proof}

\begin{corollary}\label{cor:Fultonlazdoubleintersection:weak}
Let $X$ be smooth and projective of dimension $d$,  $\lambda$ be a partition of length $d+i-2$, let $E$ be a nef $\mathbb Q$-twisted bundle of rank $e\ge \lambda_1$ and  $h$ be an ample class on $X$.    Then $\int_{X} s_{\lambda}^{(i)}(E) h^2 \ge0$.
\end{corollary}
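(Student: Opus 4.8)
The plan is to deduce this from Proposition~\ref{prop:fultonlazderivedshur} by cutting $X$ down to a surface section. Since the asserted inequality is a closed condition on $h$ whose left-hand side varies continuously with $h$, and since rational ample classes are dense in the ample cone, it is enough to treat the case $h = c_1(L)$ with $L$ an integral ample line bundle; replacing $L$ by a sufficiently large power multiplies $\int_X s_\lambda^{(i)}(E)h^2$ only by a positive integer, so we may further assume $L$ is very ample.

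First I would take general members $D_1,D_2 \in |L|$ and set $Y := D_1\cap D_2$. By Bertini's theorem $Y$ is a smooth projective variety of dimension $d-2$; it is connected, hence irreducible, when $d\ge 3$, while for $d=2$ it is a disjoint union of reduced points and one simply runs the argument on each point. Since $[Y] = h^2\cap [X]$ and $s_\lambda^{(i)}(E)$ has degree $d-2 = \dim Y$, the projection formula together with the compatibility of characteristic classes (derived Schur classes being universal polynomials in Chern classes, which restrict) gives
$$ \int_X s_\lambda^{(i)}(E)\, h^2 = \int_Y s_\lambda^{(i)}(E)|_Y = \int_Y s_\lambda^{(i)}(E|_Y). $$

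Next I would observe that $E|_Y$ is again a nef $\mathbb Q$-twisted bundle, since nefness is preserved under restriction to subvarieties (see \cite[Lemma 6.2.8]{Lazbook2}); its rank is still $e\ge \lambda_1$, and $\lambda$ is a partition with $|\lambda| = d+i-2 = \dim(Y) + i$. Hence Proposition~\ref{prop:fultonlazderivedshur}, applied to $Y$ (or to each connected component of $Y$, summing afterwards), gives $\int_Y s_\lambda^{(i)}(E|_Y)\ge 0$, which is exactly the assertion.

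The argument is essentially formal once Proposition~\ref{prop:fultonlazderivedshur} is available, so there is no genuine obstacle; the only points needing a little care are the reduction to a very ample integral $h$ (density of rational ample classes and continuity of the inequality) and the degenerate case $d=2$, in which the ``surface section'' is zero-dimensional. One could instead avoid hyperplane sections and mimic the proof of Proposition~\ref{prop:fultonlazderivedshur} directly on $X\times\mathbb P^i$, incorporating the factor $h^2$ through the product-of-Schur-classes cone construction of~\eqref{eq:productschur} (using that in the present conventions $h^2 = s_{(1,1)}(L)$ for $L$ viewed as a line bundle), but cutting by two members of $|L|$ seems the most transparent route.
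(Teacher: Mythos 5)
Your proof is correct and follows exactly the paper's (very terse) argument: rescale so $h$ is very ample and apply Proposition \ref{prop:fultonlazderivedshur} to the restriction of $E$ to the intersection of two general members of $|h|$. The additional details you supply (density of rational ample classes, Bertini, restriction of nefness, the $d=2$ case) are all correct and merely fill in what the paper leaves implicit.
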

\begin{proof}
Rescale so $h$ is very ample, and  apply the previous theorem to the restriction of $E$ to the intersection of two general elements in the linear series defined by $h$.
\end{proof}

\begin{remark} [Derived Schur Polynomials are Numerically Positive] \label{rem:derivedschurarelinercombinationsofschur} If $|\lambda| = d+i$ then by taking a resolution of singularities, we have $\int_X s_{\lambda}^{(i)}(E) \ge 0$ for all nef vector bundles $E$ on any irreducible projective variety $X$ of dimension $d$.   That is, $s_{\lambda}^{(i)}$ is a numerically positive polynomial in the sense of Fulton-Lazarsfeld, and hence by their main result \cite[Theorem I]{FultonLazarsfeld} we deduce  $s_{\lambda}^{(i)}$ can be written as a non-negative linear combination of the Schur polynomials $\{s_{\mu} : |\mu|=d\}$.  This answers a question of Xiao \cite[p10]{XiaoHighDegree}.
\end{remark}

\begin{remark}[Monomials of Derived Schur Classes]\label{rem:monomialsofderivedschurclasses} It is easy to extend this to monomials of derived Schur polynomials.  That is, if $E_1,\ldots,E_p$ are nef bundles on $X$ and $\lambda^{1},\ldots,\lambda^{p}$ are partitions such that $\sum_j |\lambda^{j}| = d$ then \begin{equation}\label{eq:monomialschupositive} \int_X \prod_j s_{\lambda^{j}}(E_j)\ge 0.\end{equation} 
We simply repeat the
proof of Proposition \ref{prop:fultonlazderivedshur}
using \eqref{eq:productschur} in place of \eqref{eq:pushforwasschur:twisted}).  For the derived case suppose we also have integers $i_1,\ldots,i_p$ and that our partitions are such that $\sum_j |\lambda^{(j)}| - i_j = d$.  Then
\begin{equation}\int_X \prod_j s_{\lambda^{j}}^{(i_j)}(E_j) \ge 0.\label{eq:monomialsofderivedarenonnegative}\end{equation}
To see this consider the product $\hat{X}:=X\times \prod_j \mathbb P^{i_j}$ and let $\tau_j$ be the pullback of the hyperplane class in $\mathbb P^{i_j}$ to $\hat{X}$.  Then \eqref{eq:monomialschupositive} applies to the class $\prod_j s_{\lambda^{j}}(E_j(\tau_j))$.  Expanding this as a symmetric polynomial in the $\tau_j$ the coefficient of $\prod_j \tau_j^{i_j}$ is precisely $\prod_j s_{\lambda^{j}}^{(i_j)}(E_j)$ so \eqref{eq:monomialsofderivedarenonnegative} follows.  The analog of  Corollary \ref{cor:Fultonlazdoubleintersection:weak} also holds for monomials of derived Schur polynomials.  \end{remark}

\section{Hodge-Riemann classes}\label{sec:HRclasses:definitions}

Let $X$ be a  projective smooth variety dimension $d$ and let $\Omega\in H^{d-2,d-2}(X;\mathbb R)$.  This defines an intersection form
$$ Q_{\Omega}(\alpha,\alpha')= \int_{X} \alpha \Omega \alpha' \text{ for } \alpha,\alpha'\in H^{1,1}(X;\mathbb R).$$

\begin{definition}[Hodge-Riemann Property]
We say that a bilinear form $Q$ on a finite dimensional vector space has the \emph{Hodge-Riemann property} if $Q$ is non-degenerate and has precisely one positive eigenvalue.    We say that $\Omega\in H^{d-2,d-2}(X;\mathbb R)$ has the Hodge-Riemann property if $Q_{\Omega}$ does, and denote by $\HR(X)$ denote the set of all $\Omega$ with this property.
\end{definition}

\begin{definition}[Weak Hodge-Riemann Property]
A bilinear form $Q$ on a finite dimensional vector space is said to have the \emph{weak Hodge-Riemann property} if it is a limit of bilinear forms that have the Hodge-Riemann property.  We say that $\Omega$ has the weak Hodge-Riemann property if $Q_{\Omega}$ does, and denotes by $\wHR(X)$ the set of $\Omega$ with this property.
\end{definition}

So $Q$ has the weak Hodge-Riemann property if and only if if has one eigenvalue that is non-negative, and all the others are non-positive.     Clearly  
$$\overline{\HR}(X)\subset \wHR(X)$$
but we do not claim these are equal (the issue being that in principle $Q_{\Omega}$ could be the limit of bilinear forms with the Hodge-Riemann property that do not come from classes in $H^{d-2,d-2}(X;\mathbb R)$).      If $h$ is ample then by the classical Hodge-Riemann bilinear relations $h^{d-2}\in \HR(X)$, and so $\wHR(X)$ is a  non-empty closed cone inside $H^{d-2,d-2}(X;\mathbb R)$.\medskip

It is convenient to work with $\wHR(X)$ as it behaves well with respect to pullbacks and pushforwards.  This is captured by the following simple piece of linear algebra.

\begin{lemma}\label{lem:simplelinearalgebra}
Let $f:V\to W$ be a linear map of vector spaces and $Q_V$ and $Q_W$ be bilinear forms on $V$ and $W$ respectively such that $$Q_W(f(v),f(v')) = Q_V(v,v') \text{ for all } v,v'\in V.$$
Suppose that $Q_W$ has the weak Hodge-Riemann property and there is a $v_0\in V\setminus\{0\}$  with $Q_V(v_0,v_0)\ge 0$.  Then $Q_V$ has the weak Hodge-Riemann property.
\end{lemma}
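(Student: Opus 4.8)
The plan is to reduce the statement to a purely linear-algebraic fact about a single bilinear form by exploiting the fact that having the weak Hodge-Riemann property is a \emph{closed} condition, so it suffices to prove the analogous statement for forms that have the genuine Hodge-Riemann property and then pass to the limit. So first I would assume $Q_W$ itself has the Hodge-Riemann property (non-degenerate with exactly one positive eigenvalue); if I can show that under this hypothesis $Q_V$ has the weak Hodge-Riemann property, then for general $Q_W$ with the weak property I write $Q_W = \lim Q_W^{(n)}$ with each $Q_W^{(n)}$ Hodge-Riemann, pull back to get $Q_V^{(n)}$ with $Q_V^{(n)}(v,v') = Q_W^{(n)}(f(v),f(v'))$, note $Q_V^{(n)} \to Q_V$, apply the special case to each $n$ (the vector $v_0$ still satisfies $Q_V^{(n)}(v_0,v_0) \to Q_V(v_0,v_0)\ge 0$, but one must be slightly careful here — see below), and conclude that $Q_V$ is a limit of weak Hodge-Riemann forms, hence itself weak Hodge-Riemann since that set is closed.

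For the core case, then, assume $Q_W$ has the Hodge-Riemann property. The key observation is that the radical (kernel) $K$ of $Q_V$ contains $\ker f$, and $Q_V$ descends to a non-degenerate form $\bar Q_V$ on $V/K$; moreover $f$ induces an injection $\bar f: V/K \hookrightarrow W$ with $Q_W(\bar f(\bar v), \bar f(\bar v')) = \bar Q_V(\bar v, \bar v')$. Thus $\bar Q_V$ is the restriction of $Q_W$ to the subspace $\bar f(V/K)\subseteq W$. Now I invoke the standard fact about restricting a form with exactly one positive eigenvalue to a subspace: such a restriction has \emph{at most} one positive eigenvalue (by Cauchy interlacing, or directly: if a subspace had a $2$-dimensional positive-definite subspace for $Q_W$, that would contradict the signature of $Q_W$). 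Combined with non-degeneracy of $\bar Q_V$, this says $\bar Q_V$ has either signature $(+,-,\dots,-)$ — i.e. the Hodge-Riemann property — or is negative definite. The first case means $Q_V$ itself has the weak Hodge-Riemann property (a form whose associated non-degenerate quotient is Hodge-Riemann is a limit of Hodge-Riemann forms — perturb on the radical). The remaining job is to rule out the negative-definite case, and this is exactly where the hypothesis $Q_V(v_0,v_0)\ge 0$ for some $v_0\neq 0$ enters: if $\bar Q_V$ were negative definite on $V/K$ then $Q_V$ would be negative semi-definite on all of $V$, forcing $Q_V(v_0,v_0)\le 0$, hence $Q_V(v_0,v_0)=0$, hence $v_0\in K$ — which is fine, $v_0$ can lie in the radical — wait, so I actually need to argue that $Q_V$ negative semidefinite with a nonzero isotropic vector is still a limit of Hodge-Riemann forms, which is true: take $\bar Q_V$ negative definite on $V/K$ with $\dim K\ge 1$, and perturb to make one radical direction slightly positive.

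So the cleaner formulation is: a non-degenerate form on a space of dimension $\ge 1$ that has at most one positive eigenvalue, OR any form with nontrivial radical and at most one positive eigenvalue, is a limit of Hodge-Riemann forms — with the sole genuine exception being a negative-definite form on a space where we cannot create a positive direction, but $Q_V$ is never of that type precisely because $v_0$ with $Q_V(v_0,v_0)\ge 0$ either is isotropic (giving a radical vector to perturb) or strictly positive (giving the Hodge-Riemann property outright). The main obstacle is bookkeeping the limit argument cleanly: I expect the slickest route is to avoid splitting into the genuine/weak cases for $Q_W$ twice and instead argue directly that $Q_V$ has at most one positive eigenvalue — deducing this from $Q_W$ weak Hodge-Riemann via the same subspace-restriction principle (which holds for the weak property too, since "at most one non-negative eigenvalue, rest non-positive" also interlaces well) — and then separately observe that "$Q_V$ has at most one positive eigenvalue and is not negative definite" already implies the weak Hodge-Riemann property, with non-negative-definiteness being exactly what $v_0$ supplies. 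I will write it in this direct form to keep the argument to a few lines.

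\begin{proof}
Since the weak Hodge-Riemann property is by definition a closed condition, and since a form has the weak Hodge-Riemann property if and only if it has at most one non-negative eigenvalue with all remaining eigenvalues non-positive, it suffices to show that $Q_V$ has at most one positive eigenvalue (the vector $v_0$ then guarantees $Q_V$ is not negative definite, and a non-negative-definite-to-one-direction form with at most one positive eigenvalue is a limit of Hodge-Riemann forms by perturbing along its radical).

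Let $K=\{v\in V: Q_V(v,v')=0 \text{ for all } v'\in V\}$ be the radical of $Q_V$; by hypothesis $\ker f\subseteq K$, so $f$ descends to $\bar f:V/K\to W$ with $Q_W(\bar f(\bar v),\bar f(\bar v'))=\bar Q_V(\bar v,\bar v')$, where $\bar Q_V$ is the induced non-degenerate form on $V/K$. Thus $\bar Q_V$ is the pullback of $Q_W$ along $\bar f$. If $\bar Q_V$ had two positive eigenvalues it would be positive definite on a $2$-dimensional subspace $P\subseteq V/K$; then $Q_W$ restricted to $\bar f(P)$ would be positive definite, and since $\bar Q_V$ is non-degenerate, $\bar f|_P$ is injective, so $Q_W$ would be positive definite on a $2$-dimensional subspace of $W$. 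But $Q_W$ has the weak Hodge-Riemann property, hence at most one non-negative eigenvalue, so it is positive definite on no subspace of dimension $\ge 2$, a contradiction. Therefore $\bar Q_V$, and hence $Q_V$, has at most one positive eigenvalue, completing the proof.
\end{proof}
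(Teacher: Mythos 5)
Your argument is in essence the paper's own: pass to the part of $V$ where $Q_V$ is non-degenerate, observe that there the form is a pullback of $Q_W$ and hence can have at most one positive eigenvalue, and use $v_0$ to exclude the negative-definite case (the paper does this via a complement of $\ker f$ rather than the quotient by the radical). However, one intermediate claim in your write-up is false as stated: from $\ker f\subseteq K$ you conclude that ``$f$ descends to $\bar f:V/K\to W$.'' Descent of $f$ to $V/K$ requires the \emph{opposite} containment $K\subseteq \ker f$; if some $k\in K$ has $f(k)\neq 0$ (which happens whenever $Q_W$ is degenerate on the image of $f$ without $f$ itself killing the relevant vectors), there is no induced map on $V/K$. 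The good news is that the quotient is an unnecessary detour: to show $Q_V$ has at most one positive eigenvalue, suppose it were positive definite on a $2$-dimensional subspace $P\subseteq V$; then for $0\neq v\in P$ one has $Q_W(f(v),f(v))=Q_V(v,v)>0$, so $f|_P$ is injective and $Q_W$ is positive definite on the $2$-dimensional subspace $f(P)\subseteq W$, contradicting the weak Hodge-Riemann property of $Q_W$. With that one-line repair, the rest of your proof is correct: your reduction to ``at most one positive eigenvalue and not negative definite'' is exactly the paper's stated characterization of the weak Hodge-Riemann property, and the elaborate limiting argument sketched in your preamble is not needed.
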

\begin{proof}
Let $N = \operatorname{ker}(f)$.  Then $N$ is orthogonal to all of $V$ with respect to $Q_V$.   The signature on a complementary subspace to $N$ is induced by $Q_W$.  Thus $Q_{V}$ can only be negative semi-definite, or have the weak Hodge-Riemann property, and the assumption that $Q_V(v_0,v_0)\ge 0$ means it is the latter case that occurs. 
\end{proof}

\begin{lemma}[Pullbacks]\label{lem:pullbacksgiveHR}
Let $\pi:X'\to X$ be a surjective map between smooth varieties of dimension $d$.  Let $\Omega\in H^{d-2,d-2}(X,\mathbb R)$ and suppose there is an $h\in H^{1,1}(X;\mathbb R)\setminus\{0\}$ with $\int_X \Omega h^2\ge 0$ and that $\pi^* \Omega \in {\wHR}(X')$.    Then $\Omega \in {\wHR}(X)$.
\end{lemma}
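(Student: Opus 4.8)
The plan is to apply the linear-algebra Lemma \ref{lem:simplelinearalgebra} to the pullback map $\pi^*: H^{1,1}(X;\mathbb R)\to H^{1,1}(X';\mathbb R)$, with $Q_V = Q_\Omega$ on $V=H^{1,1}(X;\mathbb R)$ and $Q_W = Q_{\pi^*\Omega}$ on $W=H^{1,1}(X';\mathbb R)$. The compatibility hypothesis of that lemma, namely $Q_{\pi^*\Omega}(\pi^*\alpha,\pi^*\alpha') = Q_\Omega(\alpha,\alpha')$ for all $\alpha,\alpha'\in H^{1,1}(X;\mathbb R)$, is exactly the projection formula together with the fact that $\pi_*\pi^* = (\deg \pi)\,\mathrm{id}$ on cohomology when $\pi$ is surjective and generically finite — or, more robustly for an arbitrary surjection of smooth varieties of the same dimension $d$, the statement $\int_{X'}\pi^*\beta = (\deg\pi)\int_X\beta$ for $\beta\in H^{d,d}(X;\mathbb R)$, which forces $\int_{X'}\pi^*\alpha\,\pi^*\Omega\,\pi^*\alpha' = (\deg\pi)\int_X \alpha\,\Omega\,\alpha'$. (If $\pi$ is surjective but not generically finite the degree is zero and this reading is vacuous; in that case one should instead factor $\pi$ or argue that $d = \dim X' \ge \dim X$ forces generic finiteness, so in fact $\deg\pi$ is a well-defined positive integer — I would spell this out.)

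Next I would produce the required nonzero test vector $v_0\in V$ with $Q_\Omega(v_0,v_0)\ge 0$: this is precisely the hypothesis that there exists $h\in H^{1,1}(X;\mathbb R)\setminus\{0\}$ with $\int_X \Omega h^2 \ge 0$, so take $v_0 = h$. With the compatibility identity, the assumption $\pi^*\Omega\in\wHR(X')$ (so $Q_{\pi^*\Omega}$ has the weak Hodge-Riemann property), and the test vector in hand, Lemma \ref{lem:simplelinearalgebra} applies verbatim and gives that $Q_\Omega$ has the weak Hodge-Riemann property, i.e. $\Omega\in\wHR(X)$.

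The only genuine point requiring care — and the step I expect to be the main obstacle — is the scaling identity $\int_{X'}\pi^*\beta = (\deg\pi)\int_X\beta$ and the attendant nonvanishing of $\deg\pi$: one must know that a surjective morphism between smooth projective varieties of the \emph{same} dimension is generically finite of some positive degree, so that the pulled-back form $Q_{\pi^*\Omega}$ is a positive multiple of (the form induced by) $Q_\Omega$ rather than identically zero. This is standard, but it is where the hypothesis "$\dim X' = \dim X = d$" is actually used, and I would state it explicitly before invoking Lemma \ref{lem:simplelinearalgebra}. Everything else is a formal consequence of the projection formula and the cited linear-algebra lemma.
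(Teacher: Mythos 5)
Your proposal is correct and follows essentially the same route as the paper: apply Lemma \ref{lem:simplelinearalgebra} to $\pi^*\colon H^{1,1}(X;\mathbb R)\to H^{1,1}(X';\mathbb R)$, using the projection formula to get $Q_{\pi^*\Omega}(\pi^*\alpha,\pi^*\alpha')=\deg(\pi)\,Q_{\Omega}(\alpha,\alpha')$ and taking $h$ as the test vector. Your extra remarks on generic finiteness and the harmlessness of the positive factor $\deg(\pi)$ are fine but not needed beyond what the paper already records.
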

\begin{proof}
 This follows from Lemma \ref{lem:simplelinearalgebra} applied to $\pi^*: H^{1,1}(X;\mathbb R)\to H^{1,1}(X';\mathbb R)$ since
 $Q_{\pi^* \Omega}(\pi^*\alpha,\pi^* \alpha') = \int_{X'} \pi^*( \Omega \alpha \alpha') =\deg(\pi) \int_X \Omega \alpha\alpha' = \deg(\pi) Q_{\Omega}(\alpha,\alpha')$.
\end{proof}

\begin{lemma}[Pushforwards]\label{lem:HRpushforward}
Let $\pi:X'\to X$ be a surjective map between smooth varieties.   Let $\Omega'\in {\wHR}(X')$ and suppose there is an $h\in H^{1,1}(X;\mathbb R)\setminus\{0\}$  with $\int_X (\pi_*\Omega') h^2\ge 0$.   Then $\pi_* \Omega' \in {\wHR}(X)$.
\end{lemma}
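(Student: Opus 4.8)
The plan is to reduce this statement to the linear algebra of Lemma \ref{lem:simplelinearalgebra}, just as was done for the pullback case in Lemma \ref{lem:pullbacksgiveHR}, but now using the pushforward (Gysin) map on $H^{1,1}$ going the ``wrong way'' and the projection formula. First I would dispose of the trivial dimension-counting degenerate cases: if $\dim X' > \dim X$ then $\pi_*\Omega'$ lands in $H^{d-2,d-2}(X;\mathbb R)$ only when $\Omega'$ has the appropriate degree, so write $d=\dim X$ and $d' = \dim X'$, and assume $\Omega' \in H^{d'-2,d'-2}(X';\mathbb R)$ so that $\pi_*\Omega' \in H^{d-2,d-2}(X;\mathbb R)$ by the usual shift of the Gysin pushforward by $\dim X - \dim X'$. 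The case where $\pi$ is not generically finite (i.e.\ $d' > d$) still works because the only property of $\pi$ we need is the projection formula $\pi_*(\pi^*\beta \cdot \gamma) = \beta \cdot \pi_*\gamma$, which holds for any proper morphism between smooth varieties.

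The key computation is then the following: consider the linear map $f := \pi^* : H^{1,1}(X;\mathbb R) \to H^{1,1}(X';\mathbb R)$, equip the target with the form $Q_{\Omega'}(\beta,\beta') = \int_{X'} \beta\,\Omega'\,\beta'$ and the source with $Q_{\pi_*\Omega'}(\alpha,\alpha') = \int_X \alpha\,(\pi_*\Omega')\,\alpha'$. Then by the projection formula,
\[
Q_{\Omega'}(\pi^*\alpha, \pi^*\alpha') = \int_{X'} \pi^*\alpha \cdot \Omega' \cdot \pi^*\alpha' = \int_{X'} \pi^*(\alpha\,\alpha') \cdot \Omega' = \int_X \alpha\,\alpha' \cdot \pi_*\Omega' = Q_{\pi_*\Omega'}(\alpha,\alpha'),
\]
so the compatibility hypothesis of Lemma \ref{lem:simplelinearalgebra} is satisfied with $V = H^{1,1}(X;\mathbb R)$, $W = H^{1,1}(X';\mathbb R)$, $Q_V = Q_{\pi_*\Omega'}$ and $Q_W = Q_{\Omega'}$. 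Since $\Omega' \in \wHR(X')$ by hypothesis, $Q_W$ has the weak Hodge-Riemann property. Taking $v_0 = h$, the remaining hypothesis $Q_V(v_0,v_0) = \int_X (\pi_*\Omega')\,h^2 \ge 0$ is exactly what is assumed. Lemma \ref{lem:simplelinearalgebra} then yields that $Q_V = Q_{\pi_*\Omega'}$ has the weak Hodge-Riemann property, i.e.\ $\pi_*\Omega' \in \wHR(X)$.

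The only point requiring a little care — and the place I would expect a referee to want a word — is the degree bookkeeping in the pushforward when $\pi$ is not generically finite: one must check that $\pi_*\Omega'$ genuinely lies in $H^{d-2,d-2}(X;\mathbb R)$ and that the projection formula applies in the stated form (Gysin pushforward for proper maps of smooth varieties, shifting cohomological degree by $2(d'-d)$ and Hodge bidegree by $(d'-d,d'-d)$). This is standard, so I would state it as a one-line remark rather than prove it. Everything else is immediate from Lemma \ref{lem:simplelinearalgebra}, which has already done the serious work; note in particular that we do not need $\pi$ to be surjective in any essential way beyond ensuring $\pi_*\Omega'$ behaves as expected, though surjectivity is the natural hypothesis under which the Gysin map and projection formula are cleanly stated.
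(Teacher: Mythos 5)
Your proposal is correct and follows exactly the paper's own argument: apply Lemma \ref{lem:simplelinearalgebra} to $\pi^*\colon H^{1,1}(X;\mathbb R)\to H^{1,1}(X';\mathbb R)$, verify the compatibility $Q_{\Omega'}(\pi^*\alpha,\pi^*\alpha')=Q_{\pi_*\Omega'}(\alpha,\alpha')$ via the projection formula, and take $v_0=h$. The extra remarks on degree bookkeeping for non-generically-finite $\pi$ are fine but not needed beyond what the paper states.
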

\begin{proof}
This follows from Lemma \ref{lem:simplelinearalgebra} applied to $\pi^*: H^{1,1}(X;\mathbb R)\to H^{1,1}(X';\mathbb R )$ since from the projection formula,  $$Q_{\Omega'}(\pi^*\alpha,\pi^*\alpha') = \int_{X'} \Omega' (\pi^* \alpha) (\pi^*\alpha') = \int_X \pi_*\Omega' \alpha \alpha' = Q_{\pi_*\Omega}(\alpha,\alpha').$$
\end{proof}

We will need the following variant that allows for an intermediate space that might not be smooth.

\begin{lemma}\label{lem:intermediatesingular}
Let $X,Y,Z$ be irreducible projective varieties with morphisms $Z\stackrel{\sigma}{\to} Y \stackrel{\pi}{\to} X$ and assume that $Z$ and $X$ are smooth.  Let $d=\dim X$ and assume $Z$ and $Y$ are of the same dimension $n$ and that $\sigma$ is surjective.  Let $\Omega\in H^{2n-4}(Y;\mathbb R)$ be such that $\Omega': = \pi_* \Omega\in H^{d-2,d-2}(X;\mathbb R)$.  Assume 
\begin{enumerate}
\item $\sigma^*\Omega \in \wHR(Z)$.
\item There exists an $h\in H^{1,1}(X;\mathbb R)\setminus\{0\}$ such that $\int_X (\pi_*\Omega) h^2 \ge 0$.
\end{enumerate}
Then $\pi_*\Omega\in \wHR(X)$.
\end{lemma}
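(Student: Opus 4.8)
The plan is to reduce the statement to Lemma~\ref{lem:simplelinearalgebra} applied to the pullback $f=(\pi\circ\sigma)^*\colon H^{1,1}(X;\mathbb R)\to H^{1,1}(Z;\mathbb R)$, essentially repeating the proof of Lemma~\ref{lem:HRpushforward} but routing the comparison of quadratic forms through the \emph{smooth} variety $Z$ rather than through the possibly singular $Y$. The only thing one has to extract from $Y$ is a single compatibility identity for pushforwards; everything else happens on the smooth varieties $X$ and $Z$.

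First I would observe that $\sigma\colon Z\to Y$, being a surjective morphism of irreducible projective varieties of the same dimension $n$, is generically finite; set $\delta:=\deg(\sigma)\ge 1$. The next step is the identity
$$(\pi\circ\sigma)_*\,\sigma^*\Omega=\delta\,\pi_*\Omega=\delta\,\Omega',$$
which follows from functoriality of proper pushforward together with the projection formula and $\sigma_*[Z]=\delta[Y]$ — this is the one computation that genuinely takes place on the possibly singular $Y$. Then, for $\alpha,\alpha'\in H^{1,1}(X;\mathbb R)$, the projection formula for the morphism $\pi\circ\sigma\colon Z\to X$ between smooth varieties gives
$$Q_{\sigma^*\Omega}\bigl((\pi\sigma)^*\alpha,(\pi\sigma)^*\alpha'\bigr)=\int_Z\sigma^*\Omega\cdot(\pi\sigma)^*(\alpha\alpha')=\int_X(\pi\sigma)_*(\sigma^*\Omega)\cdot\alpha\alpha'=\delta\,Q_{\Omega'}(\alpha,\alpha').$$
Hence Lemma~\ref{lem:simplelinearalgebra} applies with $V=H^{1,1}(X;\mathbb R)$, $W=H^{1,1}(Z;\mathbb R)$, $f=(\pi\sigma)^*$, $Q_V=\delta\,Q_{\Omega'}$ and $Q_W=Q_{\sigma^*\Omega}$: the displayed identity is exactly the required compatibility $Q_W(f(v),f(v'))=Q_V(v,v')$, the form $Q_W$ has the weak Hodge--Riemann property by hypothesis~(1), and $v_0:=h\ne 0$ satisfies $Q_V(h,h)=\delta\int_X\Omega' h^2\ge 0$ by hypothesis~(2). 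The lemma then yields that $Q_V=\delta\,Q_{\Omega'}$, and hence $Q_{\Omega'}$ itself (since $\delta>0$), has the weak Hodge--Riemann property; that is, $\pi_*\Omega=\Omega'\in\wHR(X)$.

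I do not expect a serious obstacle here: the argument is a direct application of Lemma~\ref{lem:simplelinearalgebra} in the spirit of Lemma~\ref{lem:HRpushforward}. The only point that requires care is the intersection-theoretic bookkeeping over the singular variety $Y$ — namely that proper pushforward and the identity $\sigma_*\sigma^*=\deg(\sigma)$ (equivalently $\sigma_*[Z]=\delta[Y]$) behave as in the smooth case — and this is precisely the reason the statement is formulated with the auxiliary smooth $Z$ dominating $Y$: once this standard fact is granted, the whole comparison of intersection forms is carried out on the smooth $X$ and $Z$.
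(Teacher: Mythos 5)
Your proposal is correct and follows essentially the same route as the paper: both reduce to Lemma \ref{lem:simplelinearalgebra} applied to $p^*=(\pi\circ\sigma)^*:H^{1,1}(X;\mathbb R)\to H^{1,1}(Z;\mathbb R)$, using $\sigma_*\sigma^*=\deg(\sigma)$ on $Y$ and the projection formula to identify $Q_{\sigma^*\Omega}(p^*\alpha,p^*\alpha')$ with $\deg(\sigma)\,Q_{\pi_*\Omega}(\alpha,\alpha')$. Your explicit rescaling $Q_V=\deg(\sigma)\,Q_{\Omega'}$ is a minor (and slightly more careful) bookkeeping choice that changes nothing since the weak Hodge--Riemann property is invariant under positive scaling.
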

\begin{proof}
Let $p= \pi\circ \sigma:Z\to X$.  By the projection formula
\begin{align*}
Q_{\sigma^* \Omega}(p^*\alpha,p^*\alpha') &= \int_Z \sigma^* \Omega p^*\alpha p^*\alpha' = \int_Z \sigma^* \Omega \sigma^* \pi^* \alpha \sigma^* \pi^*\alpha' \\
&= \deg(\sigma) \int_Y \Omega \pi^*\alpha \pi^*\alpha' = \deg(\sigma) \int_Z (\pi_* \Omega) \alpha \alpha' = \deg(\sigma) Q_{\pi_* \Omega}( \alpha,\alpha').\end{align*}
Thus the result  follows from Lemma \ref{lem:simplelinearalgebra} applied to $p^*:H^{1,1}(X;\mathbb R)\to H^{1,1}(Z;\mathbb R)$.
\end{proof}

\section{Schur classes are in $\overline{HR}$}\label{sec:schurclassesareinHR}

\begin{lemma}\label{lemma:cn2I}
Let $X$ be a smooth projective manifold of dimension $d\ge 4$, and $E$ be a nef $\mathbb Q$-twisted bundle of rank $d-2$.   Then $c_{d-2}(E)\in {\wHR}(X)$.
\end{lemma}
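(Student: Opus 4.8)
The plan is to use the Bloch–Gieseker theorem together with the Bloch–Gieseker covering trick to reduce the claim to the classical Hodge–Riemann bilinear relations. Since $E$ is a nef $\mathbb Q$-twisted bundle of rank $d-2$ on a $d$-dimensional manifold, the top Chern class $c_{d-2}(E)$ lives in $H^{d-2,d-2}(X;\mathbb R)$, which is exactly the right degree. First I would note that it suffices to prove the statement under a small ample perturbation: if $h$ is an ample class on $X$ and we can show $c_{d-2}(E\langle \varepsilon h\rangle)\in \overline{\HR}(X)\subset\wHR(X)$ for all small $\varepsilon>0$, then letting $\varepsilon\to 0$ and using that $\wHR(X)$ is closed gives $c_{d-2}(E)\in\wHR(X)$. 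So we may assume $E$ is an ample $\mathbb Q$-twisted bundle of rank $d-2$.

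Now the key geometric input is the Bloch–Gieseker construction (see \cite[Section 4.2]{Lazbook2}): since $E$ is ample of rank $r=d-2$, one can find a finite flat surjective cover $\nu:X'\to X$ with $X'$ smooth of dimension $d$ and a line bundle $N$ on $X'$ such that $N^{\otimes m}\cong \nu^* \det E$ (after adjusting), and, more usefully, such that $\nu^* E$ admits a filtration, or at least so that $c_{d-2}(\nu^* E)$ becomes a product of ample divisor classes. The cleanest route is the splitting principle version: after a suitable Bloch–Gieseker-type cover $\nu:X'\to X$ we can arrange that the pulled-back top Chern class satisfies
$$\nu^* c_{d-2}(E) = \xi_1\xi_2\cdots \xi_{d-2}$$
for ample classes $\xi_1,\dots,\xi_{d-2}\in H^{1,1}(X';\mathbb R)$. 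By Gromov's extension of the Hodge–Riemann relations to mixed products of ample classes \cite{Gromov90} (equivalently Timorin, or the "mixed Hodge–Riemann" theorem), the class $\xi_1\cdots \xi_{d-2}$ lies in $\HR(X')\subset\wHR(X')$. Finally I would invoke Lemma \ref{lem:pullbacksgiveHR}: the pullback $\nu^*$ satisfies $Q_{\nu^*\Omega}(\nu^*\alpha,\nu^*\alpha')=\deg(\nu)\,Q_\Omega(\alpha,\alpha')$, and one needs an $h\in H^{1,1}(X;\mathbb R)\setminus\{0\}$ with $\int_X c_{d-2}(E)h^2\ge 0$ — this is precisely Corollary \ref{cor:Fultonlazdoubleintersection:weak} applied with $\lambda=(1)$ repeated, i.e. the Fulton–Lazarsfeld positivity $\int_X c_{d-2}(E)h^2\ge 0$ for ample $h$. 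Hence $c_{d-2}(E)\in\wHR(X)$, and undoing the perturbation completes the proof.

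The main obstacle I expect is arranging the Bloch–Gieseker cover so that $\nu^* c_{d-2}(E)$ genuinely becomes a product of \emph{ample} $(1,1)$-classes rather than merely a nonzero one: the standard Bloch–Gieseker theorem gives nonvanishing of $c_{d-2}$ after a cover making $E$ "more split", but turning the top Chern class into an honest monomial in ample classes to which Gromov's mixed Hodge–Riemann theorem applies requires care — one likely has to iterate the covering construction once for each Chern root, controlling smoothness of $X'$ at each stage, and check that ampleness of the twisted bundle is preserved. An alternative that sidesteps Gromov's theorem would be to argue inductively on the rank using the projective bundle $\mathbb P(E)$: writing $c_{d-2}(E)$ via the relation $\sum \pi^*c_i(E)\zeta^{r-i}=0$ on $\mathbb P(E)$ where $\zeta=c_1(\mathcal O_{\mathbb P(E)}(1))$, and pushing forward a product of the ample class $\zeta$ and pulled-back ample classes, then applying Lemma \ref{lem:HRpushforward} or Lemma \ref{lem:intermediatesingular}; the positivity hypothesis in those lemmas is again supplied by Corollary \ref{cor:Fultonlazdoubleintersection:weak}. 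Either way, the linear-algebra lemmas of \S\ref{sec:HRclasses:definitions} do all the work of transporting the Hodge–Riemann property along correspondences once the geometric reduction to products of ample classes is in hand.
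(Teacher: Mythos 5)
Your reduction to the ample case is fine, but the central step of your main route --- finding a finite Bloch--Gieseker cover $\nu:X'\to X$ on which $\nu^*c_{d-2}(E)$ factors as a product $\xi_1\cdots\xi_{d-2}$ of ample $(1,1)$-classes --- is not available, and this is a genuine gap rather than a technicality to be ironed out. The Bloch--Gieseker covering construction extracts $m$-th roots of \emph{line} bundles; it does not split a vector bundle, and no finite cover does: the Chern roots of $E$ only become actual divisor classes on a flag bundle, which is not finite over $X$, changes the dimension, and in any case produces classes (of the shape $\zeta-\pi^*(\cdots)$) that are not ample. So Gromov's mixed Hodge--Riemann theorem cannot be reached this way. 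Your fallback via $\mathbb P(E)$ has the same problem in a different guise: pushing forward powers of $\zeta=c_1(\mathcal O_{\mathbb P(E)}(1))$ yields the Segre classes $s_{(1,\ldots,1)}(E)$, not $c_{d-2}(E)=s_{(d-2)}(E)$; recovering Chern (and general Schur) classes as pushforwards of top Chern classes of nef bundles requires the Fulton--Lazarsfeld cone construction of \S\ref{sec:coneclasses}, which the paper uses for Theorem \ref{thm:derivedschurclassesareinHR} but precisely does not need for this base-case lemma.

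What the paper actually does, and what your argument is missing, is a deformation-of-signature argument. For $E$ ample and $h$ ample, the Bloch--Gieseker theorem (used as a black box) gives that cup product with $c_{d-2}(E\langle th\rangle)$ is an isomorphism $H^{2}\to H^{2d-2}$ for every $t\ge 0$, hence the form $Q_t(\alpha)=\int_X\alpha\, c_{d-2}(E\langle th\rangle)\,\alpha$ on $H^{1,1}(X;\mathbb R)$ is non-degenerate for all $t\ge0$. For $t\gg0$ the leading term $t^{d-2}h^{d-2}$ dominates, so $Q_t$ has signature $(+,-,\ldots,-)$ by the classical Hodge--Riemann relations; since the signature of a continuous family of non-degenerate forms is constant, $Q_0$ has the same signature, i.e.\ $c_{d-2}(E)\in\HR(X)$, and the nef case follows by approximation. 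If you want to keep your structure you must replace the covering step by this continuity argument (or an equivalent); note also that Corollary \ref{cor:Fultonlazdoubleintersection:weak} only supplies the nonnegativity $\int_X c_{d-2}(E)h^2\ge0$ needed as a hypothesis in Lemma \ref{lem:pullbacksgiveHR}, which rules out negative semidefiniteness but cannot by itself produce the Hodge--Riemann signature upstairs.
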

\begin{proof}
This is exactly as in \cite[Proposition 3.1]{RossTomaHR}.  First assume that $E$ is ample and $X$ is smooth.  By a consequence of the Bloch-Gieseker Theorem  for all $t\in \mathbb R_{\ge 0}$  the intersection form  
$$Q_{t}(\alpha):= \int_X \alpha c_{d-2} (E\langle th \rangle) \alpha \text{ for } \alpha\in H^{1,1}(X;\mathbb R)$$ is non-degenerate (we remark that we are allowing possibly irrational $t$ here, and then $c_{d-2}(E\langle th \rangle)$ is to be understood as being defined as in \eqref{eq:defcherntwisted}).    Now for small $t$ we have $$ c_{d-2}(E\langle th\rangle) = t^{d-2} h^{d-2} + O(t^{d-3}).$$
Observe that for an intersection form $Q$, having signature $(+,-\ldots,-)$ is invariant under multiplying $Q$ by a positive multiple, and is an  open condition as $Q$ varies continuously.  Thus since we know that $h^{d-2}$ has the Hodge-Riemann property, the intersection form $(\alpha,\beta)\mapsto \int_X \alpha h^{d-2} \beta$ has signature $(+,-\ldots,-)$, and hence so does $Q_t$ for $t$ sufficiently large.   But $Q_t$ is non-degenerate for all $t\ge 0$, and hence $Q_t$ must have this same signature for all $t\ge 0$.  Thus $c_{d-2}(E)\in \HR(X)$.  

Since any $\mathbb Q$-twisted nef bundle $E$ can be approximated by an $\mathbb Q$-twisted ample vector bundle we deduce that $c_{d-2}(E)\in \overline{\HR}(X)\subset \wHR(X)$.  
\end{proof}

\begin{theorem}[Derived Schur Classes are in $\overline{\HR}$]\label{thm:derivedschurclassesareinHR}
Let $X$ be smooth and projective of dimension $d\ge 2$, let $\lambda$ be a partition of length $d+i-2$ and let $E$ be a $\mathbb Q$-twisted nef vector bundle on $X$.  Then
$$s_{\lambda}^{(i)}(E) \in \overline{\HR}(X).$$
\end{theorem}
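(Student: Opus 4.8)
The plan is to reduce the general statement to the single case already handled in Lemma~\ref{lemma:cn2I}, namely that the top Chern class $c_{d-2}$ of a nef $\mathbb{Q}$-twisted bundle of rank $d-2$ lies in $\wHR$, by using the cone construction of Proposition~\ref{prop:pushforwardschur} together with the pullback/pushforward lemmas of \S\ref{sec:HRclasses:definitions}. First I would dispose of trivial cases: if $\rank E < \lambda_1$ then $s_\lambda(E)=0$ and hence $s_\lambda^{(i)}(E)=0$, which is certainly in $\overline{\HR}(X)$; and the case $i>0$ can be reduced to the case $i=0$ by the standard trick of passing to $\hat X = X\times \mathbb{P}^i$, twisting $E$ by $\tau = c_1(\mathcal O_{\mathbb{P}^i}(1))$ pulled back to $\hat X$, and extracting the coefficient of $\tau^i$ — here one needs that $\wHR$ behaves well under this product operation and the pushforward along $\hat X \to X$, which Lemma~\ref{lem:HRpushforward} (or Lemma~\ref{lem:intermediatesingular}) supplies, provided one also checks the auxiliary positivity hypothesis $\int_X s_\lambda^{(i)}(E)h^2\ge 0$, which is exactly Corollary~\ref{cor:Fultonlazdoubleintersection:weak}.

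So the heart of the matter is: $\lambda$ a partition of $d-2$, $E$ a nef $\mathbb{Q}$-twisted bundle with $\rank E\ge \lambda_1$, show $s_\lambda(E)\in\overline{\HR}(X)\subseteq\wHR(X)$. By Proposition~\ref{prop:pushforwardschur} (in its $\mathbb{Q}$-twisted form \eqref{eq:pushforwasschur:twisted}) there is a flat family $C\subset \mathbb{P}_{\sub}(F)$ over $X$, with $C$ irreducible of dimension $n := d + f - b$ where $b=|\lambda|=d-2$, so that $s_\lambda(E) = \pi_* c_f(U'|_C)$ with $U'$ the (twisted) tautological quotient bundle, which is nef because $E$ is nef. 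Now $U'|_C$ is a nef $\mathbb{Q}$-twisted bundle of rank $f$ on $C$, and $\dim C = n = f + (d-2) = f + \dim C - 2$... wait, I should be careful: $\dim C - f = d - b = d - (d-2) = 2$, so $U'|_C$ has rank $f = \dim C - 2$. That is precisely the numerical setup of Lemma~\ref{lemma:cn2I}: a nef $\mathbb{Q}$-twisted bundle of rank (dimension $-2$) on a variety of dimension $\dim C$. The one wrinkle is that $C$ may be singular; this is exactly why Lemma~\ref{lem:intermediatesingular} was stated, so I would take a resolution $Z\to C$, apply Lemma~\ref{lemma:cn2I} on the smooth $Z$ to the pullback of $U'|_C$ (still nef) to get $\sigma^*(c_f(U'|_C)|_C)\in\wHR(Z)$, and then feed this into Lemma~\ref{lem:intermediatesingular} with the factorization $Z\to C\to X$ to conclude $\pi_* c_f(U'|_C) = s_\lambda(E)\in\wHR(X)$ — again the auxiliary hypothesis $\int_X s_\lambda(E)h^2\ge 0$ is Corollary~\ref{cor:Fultonlazdoubleintersection:weak}. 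To upgrade $\wHR$ to $\overline{\HR}$ one observes, as in Lemma~\ref{lemma:cn2I}, that a nef twisted bundle is a limit of ample twisted ones, the cone construction and the pushforward formula are continuous in the twisting parameter, and for ample $E$ one gets genuine Hodge-Riemann (via Bloch-Gieseker) at each stage, so $s_\lambda(E)$ is a limit of classes in $\HR(X)$.

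The main obstacle I anticipate is bookkeeping around the singularities of the cone $C$ and making sure the relevant bundle on the resolution is still nef and still of the correct rank relative to the dimension — but this is precisely what Lemma~\ref{lem:intermediatesingular} is designed to absorb, so it should go through cleanly. A secondary point requiring a little care is the reduction from arbitrary $i$ to $i=0$: one must verify that $s_\lambda^{(i)}(E)$ really is the $\tau^i$-coefficient of $s_\lambda(E\langle\tau\rangle)$ on $\hat X = X\times\mathbb{P}^i$ (immediate from Definition~\ref{def:derivedschur} and the twist formula), and that pushing forward along $\hat X\to X$ sends $\wHR(\hat X)$-membership of $s_\lambda(E\langle\tau\rangle)$ to $\wHR(X)$-membership of the coefficient — here one should note $\pi_*(\beta\,\tau^i) $ picks out exactly that coefficient and apply Lemma~\ref{lem:HRpushforward}. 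Everything else is an application of the linear-algebra Lemma~\ref{lem:simplelinearalgebra} packaged in the pullback/pushforward lemmas, combined with the positivity inputs from \S\ref{sec:fulton-lazarsfeld}.
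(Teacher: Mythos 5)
Your reduction to the case $i=0$ via $\hat X = X\times\mathbb P^i$, and your treatment of that case via the cone construction, a resolution of singularities, Lemma~\ref{lemma:cn2I} and Lemma~\ref{lem:intermediatesingular} (with Corollary~\ref{cor:Fultonlazdoubleintersection:weak} supplying the positivity hypothesis), is exactly the paper's argument and establishes $s_\lambda^{(i)}(E)\in\wHR(X)$. The gap is in your final step, upgrading $\wHR$ to $\overline{\HR}$. You claim that for ample $E$ "one gets genuine Hodge-Riemann at each stage," but this is false for the route you have set up: even when $E$ is ample, the tautological quotient $U'|_C$ produced by the cone construction is only \emph{nef} (it is a quotient of $\pi^*F$, and pullbacks kill ampleness; indeed on each fiber $U$ restricts to the non-ample bundle $T_{\mathbb P^{f}}(-1)$), so Lemma~\ref{lemma:cn2I} on the resolution only yields $\wHR$. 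Moreover the linear-algebra Lemma~\ref{lem:simplelinearalgebra} underlying the pushforward lemmas can never output the genuine Hodge-Riemann property, since the pullback map may have kernel and the resulting form may be degenerate. Proving $s_\lambda(E)\in\HR(X)$ for ample $E$ is precisely the much harder main theorem of the authors' earlier paper, which this paper deliberately avoids; and since the paper explicitly does not claim $\overline{\HR}(X)=\wHR(X)$, stopping at $\wHR$ does not finish the proof.

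The missing idea is a perturbation-plus-determinant argument carried out on $X$ itself. Set $\Omega_t = s_\lambda^{(i)}(E\langle th\rangle)$ for an ample class $h$ and $t\in\mathbb Q_{>0}$, and let $f(t)=\det(Q_{\Omega_t})$. By Example~\ref{ex:lowestdegree} (this is where $e\ge\lambda_1$ is used) the top-degree term of $\Omega_t$ in $t$ is a positive multiple of $h^{d-2}$, so $Q_{\Omega_t}$ is non-degenerate for $t\gg0$; hence the polynomial $f$ is not identically zero and vanishes for only finitely many $t$. For small rational $t>0$ with $f(t)\ne0$, the form $Q_{\Omega_t}$ is non-degenerate, lies in $\wHR(X)$ by the part of the argument you did correctly, and satisfies $Q_{\Omega_t}(h,h)\ge0$, so it cannot be negative definite and must actually lie in $\HR(X)$. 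Letting $t\to0$ then gives $s_\lambda^{(i)}(E)\in\overline{\HR}(X)$. (Separately, you should dispose of $d=2,3$ at the outset, since Lemma~\ref{lemma:cn2I} requires $d\ge4$; for $d=3$ the class is a positive multiple of $c_1(E)$ and the classical Hodge-Riemann relations apply.)
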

\begin{proof}
The statement is trivial unless $e:=\rank(E)\ge \lambda_1$ and $d\ge 2$ which we assume is the case.  When $d=3$,  $s_{\lambda}^{(i)}$ is a positive multiple of $c_1$ and then the result we want follows from the classical Hodge-Riemann bilinear relations.  So we can assume from now on that $d\ge 4$.   

Fix an ample class $h$ on $X$.  We first prove that $s_{\lambda}(E)\in \wHR(X)$.  Consider the case $i=0$ so $|\lambda| = d-2$.   By Corollary \ref{cor:Fultonlazdoubleintersection:weak} $\int_X s_{\lambda}(E) h^2\ge 0$.  Also, the cone construction  described in  \S\ref{sec:coneclasses} (particularly  \eqref{eq:pushforwasschur:twisted}) gives an irreducible variety $\pi:C\to X$ of dimension $n$ and a nef $\mathbb Q$-twisted vector bundle $U$ of rank $n-2$ such that $$\pi_* c_{n-2}(U) = s_{\lambda}(E).$$

Since $C$ is irreducible we can take a resolution of singularities $\sigma:C'\to C$.  Then $\sigma^*U$ is also nef, and Lemma \ref{lemma:cn2I} gives  $c_{n-2}(\sigma^*U)\in \wHR(C')$.  Thus Lemma \ref{lem:intermediatesingular} implies $s_{\lambda}(E)\in {\wHR}(X)$.

Consider next the case  $i\ge 1$, so  $|\lambda| = d+i-2$.  Again by Corollary \ref{cor:Fultonlazdoubleintersection:weak}, $\int_X s_{\lambda}^{(i)}(E) h^2\ge 0$.  Consider the product $\hat{X} = X\times \mathbb P^i$ and set $\tau = c_1(\mathcal O_{\mathbb P^i}(1))$.   Suppressing pullback notation, the $\mathbb Q$-twisted bundle $E\langle \tau \rangle$ on $\hat{X}$ is nef, so by the previous paragraph $s_{\lambda}(E\langle \tau\rangle) \in \wHR(\hat{X})$.   Now
$$s_{\lambda}(E\langle \tau\rangle) = \sum_{j=0}^{|\lambda|} s_{\lambda}^{(j)}(E) \tau^j$$
so if $\pi:\hat{X}\to X$ is the projection
$$\pi_* s_{\lambda}(E\langle \tau\rangle) = s_{\lambda}^{(i)}(E).$$
Thus by Lemma \ref{lem:HRpushforward} we get also $s_{\lambda}^{(i)}(E)\in \wHR(X)$.



To complete the proof define
$$\Omega_t = s_{\lambda}^{(i)}(E\langle th \rangle) \text{ for } t\in \mathbb Q_{\ge 0}$$
and
$$ f(t) = \det(Q_{\Omega_t}).$$
Note that the leading term of $\Omega_t$ is a positive multiple of $h^{d-2}$ (this is Example \ref{ex:lowestdegree} and it is here we use that $e\ge\lambda_1$).  In particular, for $t$ sufficiently large $Q_{\Omega_t}$ is non-degenerate (in fact it has the Hodge-Riemann property).  Thus $f$ is not identically zero, and since it is a polynomial in $t$ this implies $f(t)\neq 0$ for all but finitely many $t$.  Thus there is an $\epsilon>0$ so that $f(t)\neq 0$ for rational $0<t<\epsilon$ and we henceforth consider only $t$ in this range.  Then $Q_{\Omega_t}$ is non-degenerate, and as $Q_{\Omega_t}(h,h)\ge 0$ it cannot be negative definite.  The previous paragraph gives $\Omega_t\in {\wHR}$, so we must actually have $\Omega_t \in \HR(X)$ for small $t\in \mathbb Q_{>0}$.  Thus $\Omega_0  = s_{\lambda}^{(i)}(E)\in \overline{\HR}(X)$ as claimed.
\end{proof}

\begin{remark}\label{rmk:comparisonwitholdpaper}
Note the above proof gives more, namely that if $h$ is an ample class and $E$ is nef and $\lambda_1\le \rank(E)$ we have
 $$s_{\lambda}^{(i)}(E\langle th\rangle)\in \HR(X) \text{ for all but possibly finitely many } t\in \mathbb Q_{>0}.$$ 
As mentioned in the introduction, the main result of \cite{RossTomaHR} says more namely that if $E$ is ample of rank at least $\lambda_1$ then $s_{\lambda}^{(i)}(E)\in \HR(X)$, but the proof of that statement is significantly harder.
\end{remark}

\begin{theorem}[Monomials of Schur Classes are in $\overline{\HR}$]\label{thm:monomialsderivedschurclassesareinHR}
Let $X$ be smooth and projective of dimension $d$ and $E_1,\ldots,E_p$ be nef vector bundles on $X$. 
   Let $\lambda^1,\ldots,\lambda^p$ be partitions such that $$\sum_i |\lambda^i| = d-2.$$ Then the monomial of  Schur polynomials
$$\prod_i s_{\lambda^i}(E_i)$$
lies in $\overline{\HR}(X)$.
\end{theorem}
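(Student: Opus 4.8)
The plan is to mimic exactly the structure of the proof of Theorem \ref{thm:derivedschurclassesareinHR}, replacing the single-bundle cone construction \eqref{eq:pushforwasschur:twisted} with the fiber-product cone construction \eqref{eq:productschur} for the bundles $E_1,\ldots,E_p$. First I would dispose of trivial and low-dimensional cases: if $\rank(E_j)<\lambda^j_1$ for some $j$ then $\prod_i s_{\lambda^i}(E_i)=0$ and there is nothing to prove, so assume $\rank(E_j)\ge\lambda^j_1$ for all $j$; if $d=2$ the class lies in $H^0$ and positivity is all that is needed, and if $d=3$ then $\sum_i|\lambda^i|=1$ forces the monomial to be a nonnegative multiple of some $c_1(E_j)$, which lies in $\overline{\HR}(X)$ by the classical Hodge-Riemann relations. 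So assume $d\ge 4$.

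Next, fix an ample class $h$ on $X$. The analog of Corollary \ref{cor:Fultonlazdoubleintersection:weak} for monomials of Schur classes (noted at the end of Remark \ref{rem:monomialsofderivedschurclasses}) gives $\int_X \bigl(\prod_i s_{\lambda^i}(E_i)\bigr) h^2 \ge 0$. Form $F=\oplus_j \Hom(V_j,E_j)$, the fiber product $C=C_1\times_X\cdots\times_X C_p\subset \mathbb P_{\sub}(F)$, and the tautological quotient bundle $U$ on $\mathbb P_{\sub}(F)$ of rank $f$. By \eqref{eq:productschur} we have $\pi_* c_f(U|_C) = \prod_i s_{\lambda^i}(E_i)$. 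Since each $C_i$ is flat over $X$ with irreducible fibers (Proposition \ref{prop:pushforwardschur}), $C$ is irreducible of dimension $n:=d+f-\sum_i|\lambda^i| = f+2$, so $U|_C$ has rank $f=n-2$. Each $E_j$ is nef, hence so is each $F_j=\Hom(V_j,E_j)$ and thus $F$, and the surjection $\pi^*F\to U$ from \eqref{eq:tautologicalsequence} shows $U$ — and therefore $U|_C$ — is nef. Take a resolution of singularities $\sigma:C'\to C$; then $\sigma^*(U|_C)$ is nef of rank $n-2$ on the smooth $n$-fold $C'$, so Lemma \ref{lemma:cn2I} gives $c_{n-2}(\sigma^*(U|_C))\in\wHR(C')$. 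Applying Lemma \ref{lem:intermediatesingular} to $C'\stackrel{\sigma}{\to}C\stackrel{\pi}{\to}X$ with $\Omega=c_{n-2}(U|_C)$, whose pushforward is $\prod_i s_{\lambda^i}(E_i)$, and using the just-established inequality $\int_X(\pi_*\Omega)h^2\ge 0$ as hypothesis (2), yields $\prod_i s_{\lambda^i}(E_i)\in\wHR(X)$.

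Finally, to upgrade from $\wHR$ to $\overline{\HR}$ I would run the same perturbation argument as at the end of the proof of Theorem \ref{thm:derivedschurclassesareinHR}. Set $\Omega_t = \prod_i s_{\lambda^i}(E_i\langle th\rangle)$ for $t\in\mathbb Q_{\ge 0}$ and $f(t)=\det(Q_{\Omega_t})$. The key point is that the leading term of $\Omega_t$ as $t\to\infty$ is a strictly positive multiple of $h^{d-2}$: indeed the top-degree-in-$t$ part of $s_{\lambda^i}(E_i\langle th\rangle)$ is $s_{\lambda^i}^{(|\lambda^i|)}(E_i)\,(th)^{|\lambda^i|}$ and $s_{\lambda^i}^{(|\lambda^i|)}(E_i)>0$ in $H^0$ by Example \ref{ex:lowestdegree} (here is where $\rank(E_j)\ge\lambda^j_1$ is used), so the product has leading term a positive constant times $t^{d-2}h^{d-2}$. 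Hence $Q_{\Omega_t}$ has the Hodge-Riemann property for $t\gg 0$, so $f$ is a nonzero polynomial and $f(t)\ne 0$ for all but finitely many $t$; for rational $t$ in a small punctured interval $(0,\epsilon)$, $Q_{\Omega_t}$ is nondegenerate, is not negative definite since $Q_{\Omega_t}(h,h)=\int_X\Omega_t h^2\ge 0$, and lies in $\wHR(X)$ by the preceding paragraph applied to the nef bundles $E_i\langle th\rangle$, hence lies in $\HR(X)$. Letting $t\to 0^+$ gives $\Omega_0=\prod_i s_{\lambda^i}(E_i)\in\overline{\HR}(X)$.

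I expect the main obstacle to be purely bookkeeping rather than conceptual: verifying that the fiber-product cone $C$ is irreducible of the correct dimension and flat over $X$ with $U|_C$ nef, so that Lemma \ref{lemma:cn2I} applies verbatim — all of which is essentially packaged in Proposition \ref{prop:pushforwardschur} and the discussion around \eqref{eq:productschur}, but one must check the rank count $f=n-2$ carefully. One should also handle the $\mathbb Q$-twisted version uniformly, since the perturbed bundles $E_i\langle th\rangle$ are genuinely $\mathbb Q$-twisted; this requires only that \eqref{eq:productschur} extends to $\mathbb Q$-twists, which follows by the same polynomial-in-$t$ argument used for \eqref{eq:pushforwasschur:twisted}.
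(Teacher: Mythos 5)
Your proposal is correct and follows essentially the same route as the paper's own (sketched) proof: the fiber-product cone of \eqref{eq:productschur}, a resolution of $C$ together with Lemma \ref{lemma:cn2I} and Lemma \ref{lem:intermediatesingular} to land in $\wHR(X)$, and the determinant-of-$Q_{\Omega_t}$ perturbation with leading term a positive multiple of $h^{d-2}$ (via Example \ref{ex:lowestdegree}) to upgrade to $\overline{\HR}(X)$. The only cosmetic difference is that the paper twists $U$ by $t\pi^*h$ upstairs on the fixed cone rather than twisting each $E_i$ downstairs, which are two views of the same perturbation.
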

\begin{proof}
The proof is similar to what has already been said, so we merely sketch the details.    Set $\Omega=\prod_i s_{\lambda^i}(E_i)$.  Then \eqref{eq:productschur} gives a map $\pi:C\to X$ from an irreducible variety of dimension $n$ and nef bundle bundle $U$ on $C$ so $\pi_* c_{n-2}(U) = \Omega$.   A small modification of the proof of Proposition \ref{prop:fultonlazderivedshur} and 
Corollary \ref{cor:Fultonlazdoubleintersection:weak} means that if $h$ is ample  $\int_X \Omega h^2\ge 0$.

Consider
$$\Omega_t : = \pi_* c_{n-2}(U\langle t \pi^*h \rangle)$$
and take a resolution $\sigma:C'\to C$.  Then $\sigma^* U\langle \pi^*h\rangle$ remains nef, so Lemma \ref{lem:intermediatesingular} implies $\Omega_t \in \wHR(X)$.  

Now we can equally apply this construction replacing each $E_i$ with $E_i\otimes \mathcal O(th)$ for $t\in \mathbb N$ (which one can check does not change $\pi:C\to X$) giving
$$\pi_* c_{n-2} ( U \langle th\rangle) = \prod_i s_{\lambda^i}(E_i\langle th\rangle) \text{ for } t\in \mathbb N.$$
In particular applying Example \ref{ex:lowestdegree} to each factor on the right hand side,  the highest power of $t$ is a positive multiple of $h^{d-2}$.     Thus for almost all $t\in \mathbb Q_{>0}$ we have $Q_{\Omega_t}$ is non-degenerate, and so in fact $Q_{\Omega_t}\in \HR(X)$.  Taking the limit as $t\to 0$ gives the result we want.
\end{proof}

\section{The K\"ahler case} \label{sec:Kahler}

The main place in which projectivity has been used so far is in the application of the Bloch-Gieseker Theorem, and here we explain how this projectivity assumption can be relaxed.   Following Demailly-Peternell-Schneider \cite{DemaillyPeternellSchneider} we say a line bundle $L$ on a compact K\"ahler manifold $X$ is \emph{nef} if for all $\epsilon>0$ and all K\"ahler forms $\omega$ on $X$ there exists a hermitian metric $h$ on $L$ with curvature $dd^c \log h \ge -\epsilon \omega$.   We say that a vector bundle $E$ on $X$ is nef if the hyperplane bundle $\mathcal O_{\mathbb P(E)}(1)$ is nef.   

For the rest of this section let $(X,\omega)$ be a compact K\"ahler manifold of dimension $d$.   Given a vector bundle $E$ and $\delta\in H^{1,1}(X;\mathbb R)$ we can consider the $\mathbb R$-twisted bundle $E\langle \delta \rangle$ whose Chern classes are defined just as in the case of $\mathbb Q$-twists in the projective case.   We identify $\mathbb P(E\langle \delta\rangle)$ with $\mathbb P(E)$, and say that $E\langle \delta\rangle$ is nef if for any K\"ahler metric $\omega'$ on $\mathbb P(E)$, any $\epsilon>0$, and any closed $(1,1)$ form $\delta'$ on $X$ such that $[\delta']=\delta$,   there exists a hermitian metric $h$ on $\mathcal O_{\mathbb P(E)}(1)$ such that
$$dd^c \log h + \pi^* \delta' \ge -\epsilon \omega'.$$
We refer the reader to \cite{DemaillyPeternellSchneider} for the fundamental properties of nef bundles on compact K\"ahler manifolds, in particular to the statement that a quotient of a nef bundle is again nef, and the direct sum of two nef bundles is again nef (and each of these statements extend to the case of $\mathbb R$-twisted nef bundles with minor modifications of the proofs involved).

\begin{theorem}[Bloch-Gieseker for K\"ahler Manifolds]
Let $E$ be a nef $\mathbb R$-twisted vector bundle of rank $e\le d$ and $t>0$.    Let $e+j\le d$ and consider
$$\Omega := c_{e}(E\langle t\omega\rangle) \wedge \omega^j.$$
Then then map
$$H^{d-e-j}(X) \stackrel{\wedge \Omega}{\longrightarrow} H^{d+e+j}(X)$$
is an isomorphism.
\end{theorem}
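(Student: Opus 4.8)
The plan is to reduce the statement, by a deformation-and-continuity argument, to the classical Bloch-Gieseker theorem in the projective case and to the original (untwisted, nef) form of the Hard-Lefschetz-type statement on a compact K\"ahler manifold. First I would observe that it suffices to prove that the map is injective, since by Poincar\'e duality $H^{d-e-j}(X)$ and $H^{d+e+j}(X)$ have the same dimension and $\wedge\,\Omega$ is the adjoint (up to the perfect pairing) of $\wedge\,\Omega$ on complementary degrees; alternatively one argues directly that it is an isomorphism in middle degree and deduces the rest. The key point will be that the hypothesis $e\le d$, $e+j\le d$ places us in a range where Hard Lefschetz for the K\"ahler class $\omega$ is available, and we only need to perturb $c_e(E\langle t\omega\rangle)$ slightly to be controlled by a power of $\omega$.

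The main steps, in order, would be: \emph{(1)} Since $E$ is nef, for every $\epsilon>0$ the twisted bundle $E\langle (t+\epsilon)\omega\rangle$ can be given a metric making $\mathcal O_{\mathbb P(E)}(1)+\pi^*(t+\epsilon)\omega$ represented by a positive (K\"ahler) form on $\mathbb P(E)$; that is, $E\langle (t+\epsilon)\omega\rangle$ is ample in the metric sense, so its top Chern class $c_e$ is represented by a form that is a positive combination (with positive smooth-function coefficients) of wedges of K\"ahler forms. \emph{(2)} Write $c_e(E\langle t\omega\rangle) = c_e(E\langle(t+\epsilon)\omega\rangle) - (\text{terms of lower order in the twist})$, and more usefully expand $c_e(E\langle t\omega\rangle)\wedge\omega^j$ using \eqref{eq:defcherntwisted}: the leading term in $t$ is $t^e\binom{d\text{-}stuff}{\cdot}\,\omega^{e}\wedge\omega^j = (\text{positive})\,\omega^{e+j}$, and Hard Lefschetz says $\wedge\,\omega^{e+j}:H^{d-e-j}\to H^{d+e+j}$ is an isomorphism. \emph{(3)} Introduce the family $\Omega_s := c_e(E\langle s\omega\rangle)\wedge\omega^j$ for $s\in[0,\infty)$ (or $s\in[t,\infty)$), and set $g(s) = \det$ of the pairing matrix $(\alpha,\beta)\mapsto \int_X \alpha\wedge\Omega_s\wedge\beta$ on $H^{d-e-j}(X)\times H^{d-e-j}(X)$ after Poincar\'e duality; $g$ is a polynomial in $s$ that is nonzero for $s\gg 0$ by step (2), hence vanishes only finitely often. \emph{(4)} The Bloch-Gieseker mechanism: to promote ``nonzero for large $s$'' to ``nonzero for all $s>0$'' one runs the covering trick of Bloch-Gieseker as in \cite[p.~113]{Lazbook2}, passing to cyclic covers of $\mathbb P(E)$ ramified along high-degree (or sufficiently positive) hypersurfaces so that the twist by an arbitrary positive multiple of $\omega$ becomes an honest integral (K\"ahler, hence by Kodaira projective) class upstairs, where the classical statement applies; the nonvanishing then descends. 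Finally evaluate at $s=t$.

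The hard part will be step (4): making the Bloch-Gieseker argument work in the K\"ahler (non-projective) setting and for $\mathbb R$-twisted bundles. The subtlety is that the classical proof uses branched covers defined by sections of a line bundle realizing the twisting class as an integral class, which is exactly what one lacks when $\omega$ is a transcendental K\"ahler class on a non-projective manifold; one must instead argue that nonvanishing of $\int_X c_e(E\langle s\omega\rangle)\wedge\omega^{j}\wedge(\cdot)$ for $s$ in a dense set of rationals, combined with polynomiality in $s$ and the a priori sign/semidefiniteness coming from nefness (Proposition~\ref{prop:fultonlazderivedshur}-type positivity, here for $c_e$ of a nef twisted bundle, which gives $\int c_e(E\langle s\omega\rangle)\wedge\omega^j\wedge\beta^2 \ge 0$ for $\beta$ in the appropriate degree), forces the isomorphism for every $s>0$. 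Concretely I expect the proof to mirror Lemma~\ref{lemma:cn2I}: non-degeneracy for large $s$ plus a semidefiniteness estimate plus polynomiality in $s$ gives non-degeneracy, and hence the isomorphism, for all $s>0$; the metric definition of nefness on $X$ is used precisely to get the semidefiniteness and the approximation by ample (positive) twisted bundles, exactly as in \cite{DemaillyPeternellSchneider}.
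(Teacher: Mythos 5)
There is a genuine gap, and it sits exactly where you predicted: step (4). Your steps (2)--(3) only show that the determinant $g(s)$ of the pairing is a polynomial in $s$ that is nonzero for $s\gg 0$, hence nonzero for all but \emph{finitely many} $s$. Nothing in the proposal rules out that the given $t$ is one of the finitely many bad values, and no semidefiniteness input can rescue this: the form lives on all of $H^{d-e-j}(X)$ (not on $H^{1,1}$), where there is no signature or positivity statement available, and in any case ``semidefinite plus generically nondegenerate'' does not yield nondegeneracy at a prescribed parameter. The covering trick is likewise unavailable in principle, not just technically: a branched cover can make a rational multiple of an \emph{integral} class integral, but $s\omega$ is transcendental for a non-projective Kähler class $\omega$ no matter what $s$ is, so there is no cover ``upstairs'' where the twist becomes an honest line bundle. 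In short, you are trying to derive the Bloch--Gieseker statement (nondegeneracy for \emph{every} $t>0$) from polynomiality, whereas in this paper the logic always runs the other way: Bloch--Gieseker supplies nondegeneracy for all $t>0$, and the polynomial argument is then used only to determine signs or signatures at $t=0$. (A minor further point: your step (1) assertion that $c_e$ of a metrically positive bundle is represented by a positive combination of wedges of Kähler forms is the pointwise positivity of Chern--Weil forms, which is not known in this generality; fortunately you do not really use it.)

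The paper's proof avoids all of this by never leaving the fixed value of $t$. One writes $E=E'\langle\delta\rangle$ with $E'$ a genuine bundle, passes to $\pi:\mathbb P(E')\to X$, and sets $\zeta=c_1(\mathcal O_{\mathbb P(E')}(1))+\pi^*(\delta+t[\omega])$. The whole content is then to show that $\zeta$ is a \emph{Kähler class} for the given $t>0$: the tautological metric on $\mathcal O_{\mathbb P(E')}(1)$ is positive along the fibers, so it dominates $C^{-1}\omega'$ at the cost of $-C\pi^*\omega$, and the metric definition of nefness of $E$ supplies, for any $\epsilon>0$, a metric with curvature $\ge -\epsilon\omega'$ after adding $\pi^*\delta'$; a convex combination of the two metrics, with the $t\pi^*\omega$ term absorbing the loss, is strictly positive. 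Once $\zeta$ is Kähler, the Grothendieck relation $\zeta^e-c_1(E_t)\zeta^{e-1}+\cdots+(-1)^ec_e(E_t)=0$ together with classical Hard Lefschetz for $\zeta$ on $\mathbb P(E')$ and for $\omega$ on $X$ gives injectivity (hence, by your correct Poincaré-duality observation, bijectivity) directly. If you want to salvage your write-up, replace steps (3)--(4) entirely by this projectivization argument; the deformation in $s$ is not needed anywhere.
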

\begin{proof}
Write $E = E'\langle \delta\rangle$ where $E'$ is a genuine vector bundle.  Fix $t>0$ and set $E_t: = E\langle t\omega\rangle = E'\langle \delta + t\omega\rangle$.    Set $\pi:\mathbb P(E')\to X$ and define $\zeta' = c_1(\mathcal O_{\mathbb P(E')}(1))$ and $\zeta: = \zeta' + \pi^*(\delta+ t [\omega])$.  Then $\zeta^e - c_1(E_t) \zeta^{e-1} + \cdots + (-1)^e c_e(E_t)=0$ where we supress pullback notation for convenience.

Suppose $a\in H^{d-e-j}(X)$ has $a c_e(E_t)\omega^j=0$, and we will show that $a=0$.  To this end define
$$ b  = a.(\zeta^{e-1}- c_1(E_t) \zeta^{e-2} + \cdots +(-1)^{e-1} c_{e-1}(E_t))$$
so by construction
$$ \zeta b \omega^j = \pm a c_e(E_t) \omega^j=0$$
We claim that $\zeta$ is a K\"ahler class.   Given this for now, the Hard-Lefschetz property for $\zeta$ then gives $b\omega^j=0$ and hence $a\omega^j = \pi_* (b\omega^j)=0$ and hence $a=0$ by the Hard-Lefschetz property of $\omega^j$

It remains to show that $\zeta$ is K\"ahler, and the following is essentially what is described in \cite[proof of Theorem 1.12]{DemaillyPeternellSchneider}.    Fix $\omega'$ a K\"ahler metric on $\mathbb P(E')$, and fix a hermitian metric on $E'$ which induces a hermitian metric $\hat{h}$ on $\mathcal O_{\mathbb P(E')}(1)$.  Then $dd^c\log \hat{h}$ is strictly positive in the fiber directions, so there is a constant $C>0$ with
$$ dd^c\log\hat{h} + C \pi^* \omega \ge C^{-1}\omega'.$$
Let $\delta'$ be a closed $(1,1)$-form on $X$ with $[\delta']=\delta$, and choose $\epsilon>0$  sufficiently small that $(t-C^2 \epsilon)\omega +C\epsilon \delta'>0$.  Then as $E$ is assumed to be nef there is a hermitian metric $h$ on $\mathcal O_{\mathbb P(E')}(1)$ such that $dd^c \log h + \pi^* \delta'\ge -\epsilon \omega'$.   

Then the class $\zeta = c_1(\mathcal O_{\mathbb P(E')}(1)) + \pi^*[\delta+t\omega]$ is represented by the form
$$ (1-C\epsilon) dd^c \log h + C\epsilon dd^c \log \hat{h} + \pi^*(\delta' + t\omega) $$
which is bounded from below by
\begin{align*}
(1-C\epsilon) (-\epsilon \omega' - \pi^*\delta') &+ C\epsilon(C^{-1}\omega' - C\pi^*\omega) + \pi^*(t\omega + \delta')\\
&= C\epsilon^2 \omega' + (t-C^2\epsilon) \pi^*\omega  + C\epsilon \pi^*\delta'\\
& \ge C\epsilon^2 \omega' >0.
\end{align*}
Thus $\zeta$ is a K\"ahler class as claimed.
\end{proof}

\begin{corollary}
Let $E$ be a nef $\mathbb R$-twisted vector bundle of rank $e\le d$ and $j=d-e$.  Then
$$\int_X c_e(E)\omega^j \ge 0$$
\end{corollary}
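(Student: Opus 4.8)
The plan is to deduce this corollary from the Bloch--Gieseker theorem for K\"ahler manifolds just proved, by mimicking the argument given in the projective case in Proposition \ref{prop:fultonlazderivedshur}. Set $\Omega_t := c_e(E\langle t\omega\rangle)\wedge \omega^j \in H^{d,d}(X;\mathbb R)$ for $t\ge 0$, and regard $\int_X \Omega_t$ as a real-valued function of $t$. Since $\dim X = d = e+j$, the theorem says the map $H^0(X)\xrightarrow{\wedge \Omega_t} H^{2d}(X)$ is an isomorphism for every $t>0$; because $H^0(X)=H^{2d}(X)=\mathbb R$ (here $X$ is connected), this isomorphism is precisely the statement that $\int_X \Omega_t \ne 0$ for all $t>0$.

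Next I would expand $\Omega_t$ as a polynomial in $t$ using the defining formula \eqref{eq:defcherntwisted} for the Chern classes of a twist: $c_e(E\langle t\omega\rangle) = \sum_{k=0}^{e} c_k(E)(t\omega)^{e-k}$, so that
\[
\int_X \Omega_t = \sum_{k=0}^{e} t^{e-k} \int_X c_k(E)\,\omega^{e-k+j}
\]
is a genuine polynomial in $t$ of degree at most $e$, with leading coefficient $\int_X \omega^{d} > 0$. Hence $\int_X \Omega_t > 0$ for $t \gg 0$. Combining this with the previous paragraph --- the polynomial never vanishes on $(0,\infty)$ and is positive for large $t$, so by continuity (the intermediate value theorem) it is strictly positive on all of $(0,\infty)$ --- and then letting $t\to 0^+$ gives $\int_X \Omega_0 = \int_X c_e(E)\,\omega^j \ge 0$, which is the assertion.

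There is really no serious obstacle here; the only point needing a line of care is the passage from ``the cup-product map is an isomorphism'' to ``the integral is nonzero,'' which uses connectedness of $X$ so that top cohomology is one-dimensional and the map is multiplication by the scalar $\int_X \Omega_t$. One should also note that $\Omega_t$ is a class of type $(d,d)$ and that we are free to take $t$ irrational, extending the twist notation in the obvious way as was already done in the projective case; with these remarks in place the argument is complete.
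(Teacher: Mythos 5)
Your argument is correct and is essentially the paper's own proof: define $f(t)=\int_X c_e(E\langle t\omega\rangle)\omega^j$, use the K\"ahler Bloch--Gieseker theorem to get $f(t)\neq 0$ for $t>0$, note $f$ is a polynomial with positive leading coefficient $\int_X\omega^d$, and conclude $f>0$ on $(0,\infty)$, hence $f(0)\ge 0$. The only difference is that you spell out the (correct) reduction from the cup-product isomorphism to the nonvanishing of the integral via connectedness, which the paper leaves implicit.
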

\begin{proof}
Let $f(t) = \int_X c_e(E\langle t\omega\rangle)\omega^j$.  The Bloch-Gieseker theorem implies $f(t)\neq 0$ for all $t>0$, and since it is clearly positive for $t\gg 0$ $f$ is not identically zero.  Since $f$ is polynomial in $t$ we get $f(t)>0$ for $t>0$ sufficiently small, which proves the statement.
\end{proof}

From here almost all the results in this paper extend to the K\"ahler case, and the proofs have only trivial modifications.  We state only one and leave the rest to the reader.

\begin{theorem}[Derived Schur classes of nef vector bundles on K\"ahler manifolds are in $\overline{\HR}$]\label{thm:derivedschurclassesareinHR:Kahler}
Let $X$ be  a compact K\"ahler manifold of dimension $d\ge 2$, let $\lambda$ be a partition of length $d+i-2$ and let $E$ be an $\mathbb R$-twisted nef vector bundle on $X$.  Then
$$s_{\lambda}^{(i)}(E) \in \overline{\HR}(X).$$
\end{theorem}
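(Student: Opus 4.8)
The plan is to transcribe the proof of Theorem~\ref{thm:derivedschurclassesareinHR}, substituting the K\"ahler Bloch--Gieseker theorem and its consequences established in this section for their projective counterparts, and checking that the geometric constructions survive in the analytic category. As before, we may assume $e:=\rank(E)\ge\lambda_1$ (otherwise $s_\lambda^{(i)}(E)=0\in\overline{\HR}(X)$) and $d\ge4$; for $d=2$ (resp.\ $d=3$) the class $s_\lambda^{(i)}(E)$ is a non-negative real number (resp.\ a non-negative multiple of $c_1(E)$) and the statement follows from the classical Hodge--Riemann bilinear relations on compact K\"ahler manifolds exactly as in the proof of Theorem~\ref{thm:derivedschurclassesareinHR}. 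Fix a K\"ahler class $[\omega]$ on $X$, which will play the role of the ample class there.

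Next I would assemble the K\"ahler versions of the preliminary results. The cone construction of \S\ref{sec:coneclasses} is carried out fibrewise inside the projective bundle $\mathbb{P}_{\sub}(F)\to X$ with $F=\Hom(V,E)$; since $X$ is K\"ahler so is $\mathbb{P}_{\sub}(F)$, the cone $C$ is a closed subvariety (hence a K\"ahler space), $U|_C$ is again nef when $E$ is, and the identity $s_\lambda(E)=\pi_*c_f(U|_C)$ together with its $\mathbb R$-twisted and product variants are cohomological statements whose proofs do not use projectivity. Lemma~\ref{lemma:cn2I} holds verbatim for a nef $\mathbb R$-twisted bundle $E$ of rank $d-2$ on a compact K\"ahler manifold $X$ with $d\ge4$: one uses the K\"ahler Bloch--Gieseker theorem (with $e=d-2$, $j=0$) to see that $\alpha\mapsto\int_X\alpha\,c_{d-2}(E\langle t\omega\rangle)\,\alpha$ is nondegenerate for all $t\ge0$ when $E$ is $\mathbb R$-twisted ample, that it has the Hodge--Riemann property for $t\gg0$ because its leading term is a positive multiple of $\omega^{d-2}$, and hence for all $t\ge0$; then one passes to general nef $E$ using that $E\langle\epsilon\omega\rangle$ is ample and converges to $E$ as $\epsilon\to0$. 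Lemmas~\ref{lem:simplelinearalgebra}, \ref{lem:pullbacksgiveHR}, \ref{lem:HRpushforward} and \ref{lem:intermediatesingular} are pure linear algebra plus the projection formula, so they carry over unchanged, provided that when we take a resolution $\sigma\colon C'\to C$ of the (possibly singular) cone we know $C'$ is a \emph{compact K\"ahler manifold} --- which holds since a resolution of a compact K\"ahler space is again K\"ahler.

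The one step needing a genuine, though minor, new argument is the K\"ahler analogue of Corollary~\ref{cor:Fultonlazdoubleintersection:weak}: if $\lambda$ has length $d+i-2$ and $E$ is a nef $\mathbb R$-twisted bundle of rank $\ge\lambda_1$, then $\int_X s_\lambda^{(i)}(E)\,\omega^2\ge0$. The projective proof cuts down by two general hyperplane sections, which is unavailable here. Instead, by the product trick over $\widehat X=X\times\mathbb{P}^i$ with $\tau=c_1(\mathcal O_{\mathbb{P}^i}(1))$, integrating over $\widehat X$ (only the $\tau^i$ term contributes by degree reasons) gives $\int_X s_\lambda^{(i)}(E)\,\omega^2=\int_{\widehat X}s_\lambda(E\langle\tau\rangle)\,(p^*\omega)^2$, where $p\colon\widehat X\to X$ is the projection, so it suffices to prove $\int_Y s_\mu(G)\,\beta^2\ge0$ for $Y$ a compact K\"ahler manifold of dimension $D$, $\beta$ a nef class on $Y$, $G$ a nef $\mathbb R$-twisted bundle of rank $\ge\mu_1$, and $|\mu|=D-2$. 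Applying the cone construction over $Y$ and then resolving, one obtains a map $\rho\colon C\to Y$ from a smooth compact K\"ahler variety of dimension $n=D+f-|\mu|$ and a nef $\mathbb R$-twisted bundle $U|_C$ of rank $n-2$ with $\int_Y s_\mu(G)\,\beta^2=\int_C c_{n-2}(U|_C)\,(\rho^*\beta)^2$. Now $\rho^*\beta$ is nef on $C$, so $\rho^*\beta+\epsilon\omega_C$ is K\"ahler for every $\epsilon>0$ and a fixed K\"ahler form $\omega_C$; the K\"ahler Bloch--Gieseker theorem (whose proof works just as well with an arbitrary K\"ahler class in place of the fixed $\omega$, here with $e=n-2$, $j=2$) shows that $t\mapsto\int_C c_{n-2}(U|_C\langle t(\rho^*\beta+\epsilon\omega_C)\rangle)\,(\rho^*\beta+\epsilon\omega_C)^2$ is a polynomial that is nowhere zero for $t>0$, hence positive there; letting $t\to0$ and then $\epsilon\to0$ gives $\int_C c_{n-2}(U|_C)\,(\rho^*\beta)^2\ge0$.

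With all of this in hand the proof of Theorem~\ref{thm:derivedschurclassesareinHR} goes through line by line: the cone construction together with the K\"ahler Lemma~\ref{lemma:cn2I} (applied on a resolution of the cone) and Lemma~\ref{lem:intermediatesingular} gives $s_\lambda(E)\in\wHR(X)$ when $i=0$; the product trick over $X\times\mathbb{P}^i$ with Lemma~\ref{lem:HRpushforward} upgrades this to $s_\lambda^{(i)}(E)\in\wHR(X)$ for all $i$ (using the positivity of the previous paragraph to check the hypotheses); and finally, setting $\Omega_t=s_\lambda^{(i)}(E\langle t\omega\rangle)$ and $f(t)=\det(Q_{\Omega_t})$, the leading term of $\Omega_t$ is a positive multiple of $\omega^{d-2}$ by Example~\ref{ex:lowestdegree} (here $e\ge\lambda_1$ is used), so $f$ is a nonzero polynomial, and for small $t>0$ with $f(t)\ne0$ the form $Q_{\Omega_t}$ is nondegenerate, lies in $\wHR(X)$, and is not negative definite since $Q_{\Omega_t}(\omega,\omega)=\int_X\omega^2 s_\lambda^{(i)}(E\langle t\omega\rangle)\ge0$; hence $\Omega_t\in\HR(X)$, and letting $t\to0$ yields $s_\lambda^{(i)}(E)\in\overline{\HR}(X)$. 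The main obstacle is the one highlighted above --- replacing the use of general hyperplane sections by a direct K\"ahler Bloch--Gieseker argument with a perturbed K\"ahler class --- together with the routine but necessary bookkeeping that keeps every auxiliary space (projective bundles, cones, fibre products, resolutions) within the compact K\"ahler category so that Hard Lefschetz and the classical Hodge--Riemann relations remain available.
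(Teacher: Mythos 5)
Your proposal is correct and follows exactly the route the paper intends: the paper proves only the K\"ahler Bloch--Gieseker theorem and its positivity corollary and then declares that the remaining results ``extend with only trivial modifications,'' which is precisely the transcription you carry out. You also correctly isolate and repair the one genuinely non-routine point --- the unavailability of general hyperplane sections in Corollary~\ref{cor:Fultonlazdoubleintersection:weak} --- by invoking the K\"ahler Bloch--Gieseker theorem with $j=2$ for a perturbed K\"ahler class $\rho^*\beta+\epsilon\omega_C$ and passing to the limit.
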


\section{Combinations of Derived Schur Classes}\label{sec:combinations}

An interesting feature of the Hodge-Riemann property for bilinear forms is that it generally is not preserved by taking convex combinations, and so there is no reason to expect that a convex combination of classes with the Hodge-Riemann property again has the Hodge-Riemann property.  In fact this is true even for combinations of Schur classes of an ample vector bundle as the following example shows

\begin{example}[\!\!\protect{\cite[Section 9.2]{RossTomaHR}}]\label{example:convexcombinations}
Let $X=\mathbb P^2\times \mathbb P^3$  Then $N^1(X)$ is two-dimensional, with generators $a,b$ that satisfy $a^3=0$, $a^2b^3=1$.
Set
$\mathcal O_X(a,b) = \mathcal O_{\mathbb P_2}(a) \boxtimes \mathcal O_{\mathbb P^3}(b)$ and consider the nef vector bundle 
$$ E = \mathcal O(1,0) \oplus \mathcal O(1,0) \oplus \mathcal O(0,1).$$
One computes that the form
$$ (1-t) c_3(E) + t s_{(1,1,1)}(E)$$
gives an intersection form on $N^1(X)$ with matrix 
$$Q_t:= \left(\begin{array}{cc} t&2t \\ 2t&1+2t \end{array}\right).$$
For $t\in (0,1/2)$ the matrix $Q_t$ has two strictly positive eigenvalues.   Thus fixing $t\in (0,1/2)$, any small pertubation of $E$ by an ample class gives an ample $\mathbb Q$-twisted bundle $E'$ so that $(1-t)c_3(E') + t s_{(1,1,1)}(E')$ does not have the Hodge-Riemann property.
 \end{example}
 
Given this it is interesting to ask if there are particular convex combinations of (derived) Schur classes that do retain the Hodge-Riemann property.    To state one such result we need the following definition, for which we recall a matrix is said to be \emph{totally positive} if all its minors have non-negative determinant,.

\begin{definition}[P\'olya Frequency Sequence]
Let $\mu_0,\ldots,\mu_{N}$ be non-negative numbers, and set $\mu_i=0$ for $i<0$.  We say $\mu_0,\ldots,\mu_N$ is a \emph{P\'olya frequency sequence} if the matrix
$$ \mu:=(\mu_{i-j})_{i,j=0}^N$$
is  totally positive.\end{definition}

\begin{theorem}\label{thm:poyla}
Suppose that $X$ has dimension $d\ge 4$ that $h$ is an nef class on $X$ and $E$ is a nef vector bundle.  Let  $|\lambda|=d-2$ and $\mu_0,\ldots,\mu_{d-2}$ be a P\'olya frequency sequence.   Then the class
\begin{equation}\sum_{i=0}^{d-2} \mu_i s_{\lambda}^{(i)} (E) h^{i}\label{eq:Poylaclass}\end{equation}
lies in $\overline{\HR}(X)$.
\end{theorem}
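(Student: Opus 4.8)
The plan is to exhibit the class \eqref{eq:Poylaclass} as a positive multiple of the pushforward of a \emph{top} Chern class of a nef bundle — the P\'olya frequency sequence being fed into the Fulton--Lazarsfeld cone construction of \S\ref{sec:coneclasses} — and then to run the perturbation argument from the proof of Theorem~\ref{thm:derivedschurclassesareinHR}. First some reductions: we may assume $\mu\not\equiv 0$ and $\rank E\ge\lambda_1$ (otherwise the class vanishes); approximating a nef $h$ by $h+\epsilon g$ with $g$ ample and using that $\wHR(X)$ and $\overline{\HR}(X)$ are closed, we may assume $h$ is ample; and since $\mu$ is a P\'olya frequency sequence, the Aissen--Schoenberg--Whitney theorem lets us factor $\sum_i\mu_i z^i=c\prod_{j=1}^m(z+a_j)$ with $c>0$, $a_j\ge 0$ and $m\le d-2$, i.e. $\mu_m>0$ and $\mu_i=\mu_m\,e_{m-i}(a_1,\dots,a_m)$ for all $i$, where $e_k$ is the $k$-th elementary symmetric polynomial. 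Perturbing the roots $a_j$ (again using closedness of $\wHR(X)$) we may assume the $a_j$ are strictly positive rationals.

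\emph{The class \eqref{eq:Poylaclass} lies in $\wHR(X)$.} Apply the cone construction to the partition $\lambda$ (of $d-2$) and the nef bundle $E$, with the auxiliary data chosen so that the rank $f$ of the tautological quotient satisfies $f\ge\max(m,d-2)$. This gives an irreducible $\pi\colon C\to X$ with $\dim C=f+2$, flat over $X$, and a nef bundle $U$ of rank $f$ on $C$; applying \eqref{eq:pushforwasschur:twisted} to $E\langle th\rangle$ and using $c_f(U\langle t\pi^*h\rangle)=\sum_k c_k(U)(t\pi^*h)^{f-k}$ (from \eqref{eq:defcherntwisted}), comparison of the coefficients of $t^i$ yields $s_\lambda^{(i)}(E)\,h^i=\pi_*\big(c_{f-i}(U)\,(\pi^*h)^i\big)$ for $0\le i\le d-2$. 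Pass to a resolution $\sigma\colon C'\to C$ (so $\sigma^*U$ is still nef), and form the nef $\Q$-twisted bundle
\[
 W:=\sigma^*U\ \oplus\ \bigoplus_{j=1}^m \cO_{C'}\Big\langle\tfrac1{a_j}(\pi\sigma)^*h\Big\rangle
\]
of rank $f+m$ on $C'$. By the Whitney formula and $e_k(\tfrac1{a_1},\dots,\tfrac1{a_m})=e_{m-k}(a)/e_m(a)$, suppressing pullbacks,
\[
 c_f(W)=\tfrac1{e_m(a)}\sum_{k=0}^m e_{m-k}(a)\,c_{f-k}(U)\,h^k,
\]
so, using $\mu_i=\mu_m e_{m-i}(a)$ and the displayed formula for $s_\lambda^{(i)}(E)h^i$,
\[
 \sum_{i}\mu_i\,s_\lambda^{(i)}(E)\,h^i=(\pi\sigma)_*\Big(\sum_i\mu_i\,c_{f-i}(U)\,((\pi\sigma)^*h)^i\Big)=\mu_m\,e_m(a)\,(\pi\sigma)_*c_f(W),
\]
a positive multiple of $(\pi\sigma)_*c_f(W)$. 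Since $s_{(f)}=c_f$ and $\rank W=f+m\ge f$, apply the cone construction once more, to the partition $(f)$ and the nef bundle $W$ on the smooth $C'$ (with auxiliary data chosen so the resulting variety has dimension $\ge 4$): this writes $c_f(W)=\hat\pi_*c_{\hat f}(\hat U)$ for $\hat\pi\colon\hat C\to C'$ and a nef bundle $\hat U$ of rank $\hat f=\dim\hat C-2$. By Lemma~\ref{lemma:cn2I} (on a resolution of $\hat C$), Lemma~\ref{lem:intermediatesingular}, and Fulton--Lazarsfeld positivity $\int_{C'}c_f(W)\,\omega^2\ge 0$ for $\omega$ ample (Proposition~\ref{prop:fultonlazderivedshur}), we get $c_f(W)\in\wHR(C')$, hence $\mu_m e_m(a)\,c_f(W)\in\wHR(C')$. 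Finally Lemma~\ref{lem:HRpushforward} applied to $\pi\sigma\colon C'\to X$, whose hypothesis $\int_X\big(\sum_i\mu_i s_\lambda^{(i)}(E)h^i\big)h^2\ge 0$ holds term by term (Corollary~\ref{cor:Fultonlazdoubleintersection:weak}, after restricting each term to a complete intersection of members of $|h|$), shows that \eqref{eq:Poylaclass} lies in $\wHR(X)$.

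\emph{Upgrade to $\overline{\HR}(X)$.} For $s\in\Q_{>0}$ the bundle $E\langle sh\rangle$ is again nef, so by the previous step $\Omega_s:=\sum_i\mu_i\,s_\lambda^{(i)}(E\langle sh\rangle)\,h^i\in\wHR(X)$. From $s_\lambda^{(i)}(E\langle sh\rangle)=\sum_{l\ge 0}\binom{i+l}{i}s_\lambda^{(i+l)}(E)(sh)^l$ and $s_\lambda^{(d-2)}(E)>0$ (Example~\ref{ex:lowestdegree}), the highest-order term in $s$ of $\Omega_s$ is a positive multiple of $h^{d-2}$; since $h^{d-2}\in\HR(X)$, the polynomial $s\mapsto\det Q_{\Omega_s}$ is nonzero for $s\gg 0$, hence for all but finitely many $s$. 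For such small $s>0$, $Q_{\Omega_s}$ is non-degenerate, has the weak Hodge--Riemann property, and satisfies $Q_{\Omega_s}(h,h)=\int_X\Omega_s h^2\ge 0$ (Corollary~\ref{cor:Fultonlazdoubleintersection:weak}), so $Q_{\Omega_s}$ cannot be negative definite and therefore $\Omega_s\in\HR(X)$. Letting $s\to 0$ shows that \eqref{eq:Poylaclass} lies in $\overline{\HR}(X)$.

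\emph{The main obstacle} is the middle step: recognizing the P\'olya-frequency combination $\sum_i\mu_i c_{f-i}(U)(\pi^*h)^i$ as a positive multiple of a single top Chern class $c_f(W)$ of a \emph{nef} bundle. This is exactly where the hypothesis on $\mu$ enters essentially — through the Aissen--Schoenberg--Whitney factorization $\sum_i\mu_i z^i=c\prod_j(z+a_j)$ with $a_j\ge 0$, which is what makes the line bundles $\cO_{C'}\langle\tfrac1{a_j}(\pi\sigma)^*h\rangle$ nef and hence allows them to be appended to $U$; everything else is routine manipulation of the cone construction and the pushforward lemmas already in hand.
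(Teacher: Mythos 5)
Your overall strategy is the same as the paper's: factor the generating function $\sum_i\mu_i z^i$ via Aissen--Schoenberg--Whitney into linear factors with non-positive roots, feed the resulting elementary symmetric functions into the Fulton--Lazarsfeld cone construction so that the combination $\sum_i\mu_i s_\lambda^{(i)}(E)h^i$ becomes a pushforward of a single Chern class $c_f$ of an enlarged nef bundle, and finish with the pushforward lemmas and the determinant-perturbation argument. The reductions, the identity $s_\lambda^{(i)}(E)h^i=\pi_*\bigl(c_{f-i}(U)(\pi^*h)^i\bigr)$, the Whitney computation, and the final upgrade from $\wHR(X)$ to $\overline{\HR}(X)$ are all fine.

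There is, however, a genuine gap at the central step. The object
$W=\sigma^*U\oplus\bigoplus_j\cO_{C'}\bigl\langle a_j^{-1}(\pi\sigma)^*h\bigr\rangle$
is a direct sum of $\Q$-twisted bundles with \emph{different} twists. This is not a $\Q$-twisted bundle in the sense used in the paper (and in Lazarsfeld's formalism), which only allows a single bundle with a single twist $E\langle\delta\rangle$; a sum of differently twisted pieces has no projectivization, no tautological sequence, and no established notion of nefness. Consequently you cannot apply the cone construction of \S\ref{sec:coneclasses} to the pair $\bigl((f),W\bigr)$, nor invoke Theorem \ref{thm:derivedschurclassesareinHR} or Proposition \ref{prop:fultonlazderivedshur} for $W$: all of that machinery presupposes an honest bundle (possibly carrying one uniform twist). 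The class $c_f(W)$ you define by the Whitney formula is a perfectly good cohomology class, but the assertion that it lies in $\wHR(C')$ is exactly what remains unproved. The fix is the device you bypassed: since the $a_j$ are rational, pass to a Bloch--Gieseker cover $u\colon Y\to X$ (or of $C'$) on which each class $a_j^{-1}u^*h$ becomes integral, say $r_jc_1(\eta_j)$ with $\eta_j^{\otimes s}=u^*\cO(h)$; then $F=\bigoplus_j\eta_j^{\otimes r_j}$ is an honest nef bundle, $U\oplus\pi^*F$ is an honest nef bundle, the cone construction and Theorem \ref{thm:derivedschurclassesareinHR} apply, and one descends to $X$ by Lemma \ref{lem:pullbacksgiveHR}. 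With that insertion your argument closes up and coincides with the paper's proof of Proposition \ref{prop:poyla:divisor}.
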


 Theorem \ref{thm:poyla} follows quickly from the following statement, for which we recall $c_i$ denotes the $i$-th elementary symmetric polynomial.
 
 \begin{proposition}\label{prop:poyla:divisor}
Suppose that $X$ has dimension $d\ge 4$ and $E$ is a nef  vector bundle.   Let $\lambda$ be a partition of $d-2$.  Let $D_1,\ldots,D_q$ be ample $\mathbb Q$-divisors on $X$ for some $q\ge 1$.     Then for any $t_1,\ldots,t_q\in \mathbb Q_{>0}$ the class
$$\sum_{i=0}^{d-2} s_{\lambda}^{(i)}(E) c_i(t_1 D_1,\ldots,t_q D_q)$$
lies in $\overline{\HR}(X)$
\end{proposition}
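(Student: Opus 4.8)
The plan is to reduce to the case $q=1$ already handled (implicitly) by Theorem \ref{thm:derivedschurclassesareinHR}, by realizing the combination $\sum_i s_\lambda^{(i)}(E)\, c_i(t_1D_1,\ldots,t_qD_q)$ as a single derived-Schur-class-type expression on an auxiliary space. The key observation is that $c_i(t_1D_1,\ldots,t_qD_q)$ is exactly the degree-$i$ elementary symmetric function in the classes $t_jD_j$, and that elementary symmetric functions arise naturally as the coefficients appearing when one twists by a direct sum of line bundles. Concretely, set $G=\bigoplus_{j=1}^q \mathcal O_X(t_jD_j)$, a $\mathbb Q$-twisted nef (indeed ample) vector bundle of rank $q$, and consider the projectivized bundle or, more simply, pass to $\hat X = X\times \prod_{j} \mathbb P^{1}$ or to a single $\mathbb P^{q}$-type factor; whichever bookkeeping is cleanest, the point is that there is a space $Y$ with a nef $\mathbb Q$-twisted bundle $\mathcal E$ and a projection $\pi\colon Y\to X$ such that $\pi_*$ of the appropriate top derived Schur class of $\mathcal E$ returns precisely $\sum_i s_\lambda^{(i)}(E)\,c_i(t_1D_1,\ldots,t_qD_q)$.

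First I would make this precise using the $\mathbb Q$-twist formalism: observe that $s_\lambda(E\langle \sum_j u_j \zeta_j\rangle)$, where $\zeta_j = c_1(\mathcal O_{\mathbb P^1}(1))$ on the $j$-th $\mathbb P^1$ factor of $\hat X = X\times (\mathbb P^1)^q$ and $u_j>0$ are rational scalars, expands as $\sum_i s_\lambda^{(i)}(E)\,(\sum_j u_j\zeta_j)^i$; pushing forward along $\hat X \to X$ does not immediately give elementary symmetric functions, so instead I would twist by the tautological quotient data of $G=\bigoplus_j\mathcal O_X(t_jD_j)$. The cleaner route: the class $\sum_{i=0}^{d-2} s_\lambda^{(i)}(E)\, c_i(G)$ is the coefficient extracted from $s_\lambda(E)\cdot c(G)$-type manipulation, and by the splitting principle equals $\pi_*$ of $s_\lambda(\pi^* E\langle \xi\rangle)\cdot(\text{Chern class datum of }\mathcal O_{\mathbb P(G)}(1))$ on $\mathbb P(G)$, or more transparently one uses that $E\langle\delta\rangle$ for suitable $\delta$ built from $G$ has the right Schur class. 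Then I would invoke Theorem \ref{thm:derivedschurclassesareinHR} (or Theorem \ref{thm:monomialsderivedschurclassesareinHR}) for the nef $\mathbb Q$-twisted bundle on the auxiliary space to conclude the pushed-forward class lies in $\wHR$, combine with the positivity $\int_X (\text{class})\, h^2\ge 0$ from Corollary \ref{cor:Fultonlazdoubleintersection:weak} and Remark \ref{rem:monomialsofderivedschurclasses}, apply Lemma \ref{lem:HRpushforward}, and finally run the same limiting argument used at the end of the proof of Theorem \ref{thm:derivedschurclassesareinHR}: perturb $E$ (and the $D_j$) by $t h$, check the leading term in $t$ is a positive multiple of $h^{d-2}$ via Example \ref{ex:lowestdegree}, so that the intersection form is non-degenerate and hence in $\HR(X)$ for all but finitely many small $t>0$, then let $t\to 0$ to land in $\overline{\HR}(X)$.

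The main obstacle I anticipate is getting the auxiliary-space construction to output exactly the elementary symmetric combination $c_i(t_1D_1,\ldots,t_qD_q)$ rather than some other symmetric combination of the $t_jD_j$: powers $(\sum_j t_j\zeta_j)^i$ naturally appear from a single $\mathbb P^{q-1}$ twist, not $e_i$. The fix is to use $q$ separate $\mathbb P^1$ factors and push forward the class $\prod_{j=1}^q\bigl(1 + t_j\zeta_j\bigr)$-weighted derived Schur expression: since $\zeta_j^2=0$ on $\mathbb P^1$, the product $\prod_j(1+t_j\zeta_j)=\sum_i c_i(t_1\zeta_1,\ldots,t_q\zeta_q)$ and $\pi_*\bigl(s_\lambda(E)\prod_j(1+t_j\zeta_j)\prod_j\zeta_j^{?}\bigr)$ is not quite it either — rather one twists $E$ on $\hat X=X\times(\mathbb P^1)^q$ by $\delta=\sum_j t_j\zeta_j$ and uses $\prod_j\zeta_j$ to cut degree, but $s_\lambda^{(i)}(E)\delta^i$ picks up multinomial not elementary-symmetric coefficients. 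The genuinely correct device is $\mathbb P(G)$ with $G=\bigoplus_j\mathcal O(t_jD_j)$: there $c_i(G)$ are the honest elementary symmetric functions of the $t_jD_j$, $G$ is ample hence $E\boxtimes(\text{stuff})$ built from it is nef, and $\pi_*$ of $c_{\mathrm{top}}$ of the tautological bundle recovers $\sum_i s_\lambda^{(i)}(E)c_i(G)$ via the cone/Schur pushforward formalism of \S\ref{sec:coneclasses}. Once the construction is pinned down, everything else is the now-standard pushforward-plus-limiting routine, so I expect the write-up to be short; the care is entirely in the identification of the combinatorial coefficients.
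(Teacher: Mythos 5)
Your overall strategy -- encode the elementary symmetric functions $c_i(t_1D_1,\ldots,t_qD_q)$ as Chern classes of a direct sum of line bundles, feed this into the cone-class machinery of \S\ref{sec:coneclasses}, push forward, and finish with the perturbation/determinant limiting argument -- is the right one and matches the paper's architecture. But the central identification, which you yourself defer as ``the care is entirely in the identification of the combinatorial coefficients,'' is exactly where the content of the proof lives, and your candidate device does not work as stated. Projectivizing $G=\bigoplus_j\mathcal O(t_jD_j)$ and taking $c_{\mathrm{top}}$ of the tautological bundle on $\mathbb P(G)$ does not produce $\sum_i s_\lambda^{(i)}(E)\,c_i(G)$. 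What the paper actually does is stay on the cone $C\to X$ from \S\ref{sec:coneclasses} (where $U$ has rank $n-2$, $n=\dim C$), form the \emph{direct sum} $\tilde U=U\oplus\pi^*F$ with $F$ the direct sum of line bundles, and use the Whitney formula $c_{n-2}(\tilde U)=\sum_i c_{n-2-i}(U)\,\pi^*c_i(F)$ together with the refined pushforward formula $\pi_*c_{n-2-i}(U)=s_\lambda^{(i)}(E)$ (Lemma \ref{lemma:pushforwardderivedschur:latex}, which goes beyond the $i=0$ case quoted in Proposition \ref{prop:pushforwardschur} and must be invoked explicitly). Note also that $c_{n-2}(\tilde U)$ is \emph{not} the top Chern class of $\tilde U$ (which has rank $n-2+q$); it is the Schur class $s_{(n-2)}(\tilde U)$ on the $n$-dimensional resolution of $C$, and it is in this form that Theorem \ref{thm:derivedschurclassesareinHR} and Lemma \ref{lem:intermediatesingular} are applied.

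A second gap: you treat $G=\bigoplus_j\mathcal O_X(t_jD_j)$ as a ``$\mathbb Q$-twisted nef vector bundle,'' but the paper's $\mathbb Q$-twist formalism only covers a single bundle twisted by a single class; a direct sum of line bundles twisted by \emph{different} rational classes $t_jD_j$ is not an object for which nefness or the cone construction has been defined. The paper resolves this by writing $t_j=r_j/s$ and passing to a Bloch--Gieseker cover $u:Y\to X$ on which $s$-th roots $\eta_j$ of $u^*\mathcal O(D_j)$ exist, so that $F=\bigoplus_j\eta_j^{\otimes r_j}$ is an honest nef bundle with $c_i(F)=u^*c_i(t_1D_1,\ldots,t_qD_q)$; one then descends via Lemma \ref{lem:pullbacksgiveHR}. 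Without this (or an equivalent device) your construction of the auxiliary nef bundle is not defined. Your description of the endgame (perturb by $th$, leading term a positive multiple of $h^{d-2}$, nonvanishing of $\det Q_{\Omega_t}$ for small rational $t>0$) is correct and matches the paper.
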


\begin{proof}[Proof of Theorem \ref{thm:poyla}]
If all the $\mu_i$ vanish the statement is trivial, so we assume this is not the case. From the Aissen-Schoenberg-Whitney Theorem \cite{AissenSchoenbergWhitney}, the assumption that $\mu_i$ is a P\'olya frequency sequence implies that the generating function
$$\sum_{i=0}^{d-2} \mu_i z^i$$
has only real roots, and since each $\mu_i$ is non-negative these roots are then necessarily non-positive.   Writing these roots as $\{-t_j\}$ for $t_j\in \mathbb R_{\ge 0}$ means
$$\sum_{i=0}^{d-2} \mu_i z^i = \kappa\prod_{j=0}^N (z+t_j) \text{ where }\kappa>0$$
which implies 
$$ \mu_i =  \kappa c_i(t_1,\ldots,t_{ N}) \text{ for all }i.$$

Now for each $j$ let $t_j^{(n)}\in \mathbb Q_{>0}$ tend to $t_j$ as $n\to \infty$.   Fix an ample divisor $h''$ and consider the class $h': = h + \frac{1}{n} h''$.   Proposition \ref{prop:poyla:divisor} (applied with $q= N$ and $D_1=\cdots = D_q = h'$) implies
$$\sum_{i=0}^{d-2} s_{\lambda}^{(i)}(E) c_i(t^{(n)}_1,\ldots,t^{(n)}_{ N}) (h')^{i}$$
lies in $\overline{\HR}(X)$.     Taking the limit as $n\to \infty$ gives the statement we want.
\end{proof}

\begin{proof}[Proof of Proposition \ref{prop:poyla:divisor}]
Set
$$ \Omega : = \Omega(D_1,\ldots,D_p): = \sum_{i=0}^{d-2} s_{\lambda}^{(i)}(E) c_i(D_1,\ldots,D_p).$$
Without loss of generality we may assume all the $D_i$ are integral and very ample.  Write $t_j = r_j/s$ for some positive integers $r_j$ and $s$.  By an iterated application of the Bloch-Gieseker covering construction, we find a finite $u:Y\to X$ and line bundles $\eta_j$ on $X'$ such that that $\eta_j^{\otimes s} = u^* \mathcal O(D_j)$.  Thus 
$$ r_j c_1(\eta_j) = t_j u^* D_j.$$

Set $E' = u^* E$.  Consider the cone construction for $E'$ as described in \S\ref{sec:coneclasses}.   That is, there is a surjective $\pi: C\to Y$ from an irreducible variety $C$ of dimension $n$, and a nef vector bundle $U$ on $C'$ of rank $n-2$ such that $\pi_* c_{n-2}(U)= s_{\lambda}(E')$.  In fact more is true namely;

\begin{lemma}\label{lemma:pushforwardderivedschur:latex}
\begin{equation}\pi_* c_{n-2-i}(U|_C) = s_{\lambda}^{(i)}(E') \text{ for } 0\le i\le |\lambda|.\label{eq:pushforwardderivedschur:removed}\end{equation}
\end{lemma}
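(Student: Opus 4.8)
The plan is to deduce Lemma \ref{lemma:pushforwardderivedschur:latex} from the untwisted identity $\pi_* c_{n-2}(U|_C) = s_\lambda(E')$ by applying it with $E'$ replaced by the $\mathbb{Q}$-twist $E'\langle \tau\rangle$ and extracting coefficients of powers of $\tau$. Concretely, I would introduce an auxiliary space $\hat X = X\times \mathbb P^i$ (or work directly with a formal twisting parameter $\tau = c_1(\mathcal O_{\mathbb P^i}(1))$, exactly as in the proof of Proposition \ref{prop:fultonlazderivedshur} and in Remark \ref{rem:monomialsofderivedschurclasses}). As noted in \S\ref{sec:coneclasses}, the cone construction extends to $\mathbb Q$-twisted bundles: the space $\mathbb P_{\mathrm{sub}}(F\langle\delta\rangle)$ is identified with $\mathbb P_{\mathrm{sub}}(F)$, the cone $C$ is unchanged, and the tautological quotient $U$ is replaced by $U\langle \pi^*\delta\rangle$, with \eqref{eq:pushforwasschur:twisted} giving $s_\lambda(E'\langle\delta\rangle) = \pi_* c_f(U\langle\pi^*\delta\rangle|_C)$.

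The key computation is then to expand both sides of \eqref{eq:pushforwasschur:twisted} with $\delta$ replaced by the twisting parameter. On the one hand, by definition of the derived Schur polynomials, $s_\lambda(E'\langle\tau\rangle) = \sum_{j=0}^{|\lambda|} s_\lambda^{(j)}(E')\,\tau^j$. On the other hand, by the definition \eqref{eq:defcherntwisted} of Chern classes of a twist applied to $U$ (whose relevant Chern class here is $c_{n-2}$, i.e. $c_f$ in the notation of \S\ref{sec:coneclasses} with $f = n-2$ after restriction to $C$ of dimension $n$), we have $c_{n-2}(U\langle\pi^*\tau\rangle|_C) = \sum_{j} \binom{(n-2)-(n-2-j)}{j}\, c_{n-2-j}(U|_C)\,(\pi^*\tau)^j = \sum_j \binom{j}{j} c_{n-2-j}(U|_C)(\pi^*\tau)^j$; one must be slightly careful about the exact binomial coefficient, but the point is that the coefficient of $(\pi^*\tau)^j$ is a (constant) multiple of $c_{n-2-j}(U|_C)$, and since $\tau$ is a class pulled back from the base, it commutes with $\pi_*$ via the projection formula. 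Matching coefficients of $\tau^j$ on the two sides, using that the $\tau$-powers up to $\tau^{\dim \mathbb P^i}$ are linearly independent (or that the equality of polynomials in a formal $\tau$ forces equality of coefficients), yields $\pi_* c_{n-2-j}(U|_C) = (\text{const})\cdot s_\lambda^{(j)}(E')$; checking that the constant is $1$ for each $j$ amounts to the bookkeeping that the twisting formula \eqref{eq:defcherntwisted} for the top-degree-relevant Chern class $c_{n-2}$ of a rank-$(n-2)$ bundle has all binomial coefficients equal to $1$, precisely matching the generating-function definition of $s_\lambda^{(j)}$.

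The main obstacle I anticipate is purely notational/bookkeeping rather than conceptual: reconciling the index $f = \mathrm{rank}(F)-1$ from Proposition \ref{prop:pushforwardschur} with the rank $n-2$ of $U|_C$ used here, and verifying that the binomial coefficients appearing in \eqref{eq:defcherntwisted} for $c_{n-2}$ of a bundle of rank exactly $n-2$ are all $1$, so that the expansion of $c_{n-2}(U\langle\pi^*\tau\rangle|_C)$ in powers of $\tau$ matches term-by-term the generating-function definition of the $s_\lambda^{(i)}$. Once that is pinned down, the identity \eqref{eq:pushforwardderivedschur:removed} falls out immediately by comparing coefficients, and no further geometric input is needed beyond what \S\ref{sec:coneclasses} already provides.
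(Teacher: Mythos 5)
Your proposal is correct and follows essentially the same route as the paper's own (sketch) proof: twist $U$ by $\pi^*\delta$, note that \eqref{eq:defcherntwisted} applied to the top Chern class of the rank-$(n-2)$ bundle $U|_C$ has all binomial coefficients equal to $1$, push forward via the projection formula, and match against the generating-function definition of $s_\lambda^{(i)}$ using \eqref{eq:pushforwasschur:twisted}. Your device of passing to $X\times\mathbb P^i$ to make the powers of $\tau$ linearly independent is a clean way to justify the coefficient extraction that the paper's sketch glosses over (deferring to \cite[Proposition 5.2]{RossTomaHR}).
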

\begin{sketchproof}
Formally this is clear: for if $\delta'\in H^{1,1}(X;\mathbb R)$ then $c_{n-2}(U\langle\pi^* \delta'\rangle)=  \sum c_{n-2-i}(U) (\pi^*\delta')^i$ and pushing this forward to $X$ gives a polynomial in $\delta'$ of classes on $X$ whose coefficients are the derived Schur classes $s_{\lambda}^{(i)}(E')$.  For a full proof we refer the reader to  \cite[Proposition 5.2]{RossTomaHR}.
\end{sketchproof}

Continuing with the proof of the Proposition, set
$$ F = \bigoplus_{i=1}^p \eta_i^{\otimes r_i}$$
so
$$ c_j(F) = c_j ( r_1 c_1(\eta_1), \cdots, r_p c_1(\eta_p)) = u^* c_j( t_1D_1,\ldots, t_p D_p).$$
Then on $C'$ the bundle
$$ \tilde{U}  := U \oplus \pi^* F$$ 
is nef.  Take a resolution $\sigma:C\to C'$, the vector bundle $\sigma^*U$ remains nef and so using Theorem \ref{thm:derivedschurclassesareinHR} and Lemma \ref{lem:intermediatesingular} 
$$\pi_* c_{n-2}(\tilde{U}) \in \wHR(Y)$$
But
\begin{align*}
\pi_* c_{n-2}(\tilde{U})&= \pi_*( c_{n-2}(U) + c_{n-3}(U) \pi^* c_1(F) + \cdots + c_{n-2-d}(U) \pi^* c_d(F))\\
&= s_{\lambda}(E') + s_{\lambda}^{(1)}(E')c_1(F) + \cdots + s_{\lambda}^{(d-2)}(E') c_{d-2}(F) \\
&= u^*\Omega.
\end{align*}
So by Lemma \ref{lem:pullbacksgiveHR} applied to $u:Y\to X$ we conclude that $\Omega\in \wHR(X)$.

To show that in fact $\Omega\in \overline{\HR}(X)$ we consider the effect of replacing each $D_i$ with $D_i +th$.  Let $\Omega_t: = \Omega(D_1+th,\ldots,D_p+th)$ which is a polynomial in $t$ whose $t^{d-2}$ term is some positive multiple of $h^{d-2}$.   Setting $f(t) = \det(Q_{\Omega_t})$ we conclude exactly as in the end of the proof of Theorem 
\ref{thm:derivedschurclassesareinHR} that $\Omega_t\in \HR(X)$ for  $t\in \mathbb Q_{+}$ sufficiently small, and thus $\Omega\in \overline{\HR}(X)$ as required.
\end{proof}

\begin{question}
Suppose that $\mu_1,\ldots,\mu_{d-2}$ is a P\'olya frequency sequence with each $\mu_i$ strictly positive,  and that $h$ and $E$ are ample.    Is it then the case that the class in \eqref{eq:Poylaclass} is actually in $\HR(X)$?    The difficulty here is that to follow the proof we have given above one needs to address the possibility that some of the $t_j$ are irrational.  
\end{question}

\section{Inequalities}\label{sec:inequalities}

\subsection{Hodge-Index Type inequalities}
The simplest and most fundamental inequality obtained from the Hodge-Riemann property is the Hodge-index inequality.   


\begin{theorem}[Hodge-Index Theorem]
Let $X$ be a manifold of dimension $d$ and $\Omega\in \wHR(X)$. 
If $\beta \in H^{1,1}(X)$ is such that $\int_X \beta^2 \Omega\ge0$ then  for any $\alpha \in H^{1,1}(X)$ it holds that
\begin{equation} \int_X \alpha^2 \Omega \int_X \beta^2 \Omega \le \left( \int_X \alpha \beta \Omega\right)^2\label{eq:HI}\end{equation}
Moreover if $\Omega\in \HR(X)$ and $\int_X \beta^2 \Omega>0$ then equality holds in \eqref{eq:HI} if and only if $\alpha$ and $\beta$ are proportional.
\end{theorem}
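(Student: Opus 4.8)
The plan is to reduce \eqref{eq:HI} to a statement about the restriction of $Q_\Omega$ to the at most two-dimensional subspace $W:=\operatorname{span}(\alpha,\beta)\subseteq H^{1,1}(X;\R)$. In the spanning set $\{\alpha,\beta\}$ this restriction is represented by the symmetric matrix
$$M=\begin{pmatrix} \int_X\alpha^2\Omega & \int_X\alpha\beta\,\Omega\\ \int_X\alpha\beta\,\Omega & \int_X\beta^2\Omega\end{pmatrix},$$
and \eqref{eq:HI} is exactly the assertion that $\det M\le 0$, while equality in \eqref{eq:HI} is the assertion $\det M=0$.

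First I would extract the only input needed from $\Omega\in\wHR(X)$: that $Q_\Omega$ has at most one positive eigenvalue. Indeed, if a symmetric form had a $2$-dimensional positive-definite subspace $U$, then every form sufficiently close to it would still be positive definite on $U$; hence this cannot happen for a limit of forms each of which has the Hodge-Riemann property (and so has at most one positive eigenvalue). Restricting to $W$, it follows that $M$ too has at most one positive eigenvalue.

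Next comes a short case analysis. If $\alpha$ and $\beta$ are linearly dependent, then $M$ has rank at most one and $\det M=0$. Otherwise, suppose for contradiction that $\det M>0$; then the eigenvalues of $M$ are nonzero and of equal sign. They are not both positive, since that would exhibit $W$ as a $2$-dimensional positive-definite subspace for $Q_\Omega$; and they are not both negative, since then $M$ would be negative definite, giving $\int_X\beta^2\Omega=M_{22}<0$ and contradicting the hypothesis $\int_X\beta^2\Omega\ge0$. Hence $\det M\le0$ in all cases, which is \eqref{eq:HI}.

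For the equality statement, assume now $\Omega\in\HR(X)$, so $Q_\Omega$ is nondegenerate of signature $(+,-,\dots,-)$, and assume $\int_X\beta^2\Omega>0$. If $\alpha=c\beta$ then both sides of \eqref{eq:HI} equal $c^2\bigl(\int_X\beta^2\Omega\bigr)^2$. Conversely, equality forces $\det M=0$, so $\ker M\neq 0$; picking a nonzero $(a,b)\in\ker M$ and setting $\gamma:=a\alpha+b\beta$, this says $Q_\Omega(\gamma,\alpha)=Q_\Omega(\gamma,\beta)=0$, so $\gamma$ is $Q_\Omega$-orthogonal to all of $W$ and in particular $Q_\Omega(\gamma,\gamma)=0$. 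Since $Q_\Omega(\beta,\beta)>0$, the line $\R\beta$ is positive for $Q_\Omega$, and by additivity of the signature over the orthogonal splitting $H^{1,1}(X;\R)=\R\beta\oplus\beta^{\perp}$ the hyperplane $\beta^{\perp}$ is negative definite; as $\gamma\in\beta^{\perp}$ with $Q_\Omega(\gamma,\gamma)=0$, we conclude $\gamma=0$. Finally $a\alpha+b\beta=0$ with $\beta\neq0$ forces $a\neq0$, so $\alpha$ is proportional to $\beta$. I expect the only point requiring care is the first step---making precise that ``having at most one positive eigenvalue'' is preserved under the limits defining $\wHR(X)$---while the remainder is routine $2\times2$ linear algebra.
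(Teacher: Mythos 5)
Your proof is correct and complete. It is the same standard linear-algebra argument that the paper's proof merely points to without writing out: the inequality is the statement that the Gram matrix of $Q_\Omega$ on $\operatorname{span}(\alpha,\beta)$ has nonpositive determinant, which follows from $Q_\Omega$ having at most one positive eigenvalue, and your openness argument correctly justifies that this eigenvalue bound passes to the limits defining $\wHR(X)$. The only organizational difference is that the paper suggests handling the possibly degenerate case of $Q_\Omega$ by quotienting out its kernel and invoking the classical nondegenerate statement, whereas your uniform $2\times 2$ eigenvalue analysis treats the degenerate and nondegenerate cases at once; this is a mild streamlining rather than a different method. Your equality analysis (producing $\gamma\in\ker M$, placing it in $\beta^{\perp}$, and using that $\beta^{\perp}$ is negative definite when $Q_\Omega$ has signature $(+,-,\dots,-)$ and $Q_\Omega(\beta,\beta)>0$) is exactly the classical argument and is watertight, including the check that $a\neq 0$ so that $\alpha$, not merely $\beta$, is determined.
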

\begin{proof}
The statement is about symmetric bilinear forms with the given signature and its proof is standard.  Indeed, the case when $\int_X \beta^2 \Omega=0$ is trivial and the case when the intersection form is nondegenerate and $\int_X \beta^2 \Omega>0$ is classical. Finally, the case when the intersection form is degenerate and $\int_X \beta^2 \Omega>0$ reduces itself to the previous one by modding out the kernel of the intersection form. 
\end{proof}

In particular (namely Theorem \ref{thm:derivedschurclassesareinHR})  the inequality \eqref{eq:HI} applies when $\Omega=s_{\lambda}(E)$ whenever $\lambda$ is a partition of $d-2$, $E$ is a nef $\mathbb Q$-twisted bundle on $X$ and $\beta$ is nef.      We now prove a variant of this that gives additional information.

\begin{theorem}\label{thm:thmschurhodgeimproved}
Let $X$ be a projective manifold of dimension $d\ge 4$ and let $E$ be a $\mathbb Q$-twisted nef vector bundle and $h\in H^{1,1}(X;\mathbb R)$ be nef.  Also let $\lambda$ be a partition of length $|\lambda|= d-1$.    Then for all $\alpha\in H^{1,1}(X;\mathbb R)$,
\begin{equation}\int_X  \alpha^2 s_{\lambda}^{(1)}(E) \int_X h s_{\lambda}(E)  \le 2 \int_X \alpha h s_{\lambda}^{(1)}(E)   \int_X \alpha s_{\lambda}(E)\label{eq:schurhodgeimproved}\end{equation}
\end{theorem}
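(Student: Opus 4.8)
The plan is to combine the two classes $s_\lambda(E)\in H^{d-1,d-1}(X)$ and $s_\lambda^{(1)}(E)\in H^{d-2,d-2}(X)$ into a single Hodge--Riemann class on the $(d+1)$-fold $\hat X:=X\times\mathbb P^1$, and then to deduce \eqref{eq:schurhodgeimproved} from the fact that an associated $3\times 3$ symmetric matrix has non-negative determinant. We may assume $\operatorname{rk}(E)\ge\lambda_1$, since otherwise every class in \eqref{eq:schurhodgeimproved} vanishes. Write $p\colon\hat X\to X$ for the projection and let $\tau\in H^{1,1}(\hat X)$ be the pullback of a generator of $H^2(\mathbb P^1)$, so that $\tau^2=0$ and $\int_{\hat X}\tau\, p^*\xi=\int_X\xi$ for all $\xi\in H^{d,d}(X)$. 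Then $(p^*E)\langle\tau\rangle$ is a $\mathbb Q$-twisted nef bundle on $\hat X$, and because $\tau^2=0$,
$$\Omega:=s_\lambda\big((p^*E)\langle\tau\rangle\big)=\sum_{i\ge 0}\tau^i\,p^*s_\lambda^{(i)}(E)=p^*s_\lambda(E)+\tau\,p^*s_\lambda^{(1)}(E).$$
Since $\dim\hat X=d+1$ and $\lambda$ has length $d-1=(d+1)-2$, Theorem \ref{thm:derivedschurclassesareinHR} applies on $\hat X$ and gives $\Omega\in\overline{\HR}(\hat X)\subseteq\wHR(\hat X)$; in particular $Q_\Omega$ has at most one positive eigenvalue.

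Next I would evaluate $Q_\Omega$ on the classes $v_1=p^*\alpha$, $v_2=p^*h$, $v_3=\tau$ in $H^{1,1}(\hat X)$. Using the projection formula, $\tau^2=0$, and the vanishing of classes of degree $>d$ on $X$, one computes that the symmetric matrix $M:=\big(Q_\Omega(v_i,v_j)\big)$ is
$$M=\begin{pmatrix}\mathcal A&\mathcal B&\mathcal D\\\mathcal B&\mathcal C&\mathcal F\\\mathcal D&\mathcal F&0\end{pmatrix},$$
where $\mathcal A=\int_X\alpha^2s_\lambda^{(1)}(E)$, $\mathcal B=\int_X\alpha h\,s_\lambda^{(1)}(E)$, $\mathcal C=\int_X h^2s_\lambda^{(1)}(E)$, $\mathcal D=\int_X\alpha\,s_\lambda(E)$, $\mathcal F=\int_X h\,s_\lambda(E)$. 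As a pullback of $Q_\Omega$, the form $M$ again has at most one positive eigenvalue, and since $M_{33}=0$ it is not negative definite; hence its eigenvalues $\mu_1\ge\mu_2\ge\mu_3$ satisfy $\mu_1\ge 0\ge\mu_2\ge\mu_3$, so $\det M=\mu_1\mu_2\mu_3\ge 0$. Expanding the determinant gives $2\mathcal B\mathcal D\mathcal F-\mathcal A\mathcal F^2-\mathcal C\mathcal D^2\ge 0$, that is,
$$\mathcal A\mathcal F^2+\mathcal C\mathcal D^2\le 2\mathcal B\mathcal D\mathcal F.$$

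To conclude I would feed in the positivity inputs $\mathcal C\ge 0$ (Corollary \ref{cor:Fultonlazdoubleintersection:weak}, applied first for $h$ ample and then by continuity) and $\mathcal F\ge 0$ (the monomial Fulton--Lazarsfeld positivity \eqref{eq:monomialschupositive} applied to the nef classes $h$ and $s_\lambda(E)$). When $\mathcal F>0$ one divides the last inequality by $\mathcal F$ and discards the non-negative term $\mathcal C\mathcal D^2/\mathcal F$, obtaining $\mathcal A\mathcal F\le 2\mathcal B\mathcal D$, which is \eqref{eq:schurhodgeimproved}. For the degenerate case $\mathcal F=0$ I would fix an ample class $h_0$ with $\int_Xh_0\,s_\lambda(E)>0$ — one exists unless $s_\lambda(E)$ is numerically trivial, in which case $\mathcal D=\mathcal F=0$ and \eqref{eq:schurhodgeimproved} reads $0\le 0$ — and rerun the argument with $h$ replaced by the ample class $h+\epsilon h_0$; this leaves $\mathcal A$ and $\mathcal D$ unchanged, makes $\mathcal F$ strictly positive, and yields $\mathcal A\int_X(h+\epsilon h_0)s_\lambda(E)\le 2\mathcal D\int_X\alpha(h+\epsilon h_0)s_\lambda^{(1)}(E)$; letting $\epsilon\to 0$ gives \eqref{eq:schurhodgeimproved}.

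I expect the one genuinely non-obvious step to be the first: recognizing that the correct object is $s_\lambda((p^*E)\langle\tau\rangle)$ on $X\times\mathbb P^1$, which is what lets one see $s_\lambda(E)$ and $s_\lambda^{(1)}(E)$ simultaneously inside a single Hodge--Riemann class, after which \eqref{eq:schurhodgeimproved} drops out of $\det M\ge 0$ thanks to the favourable sign on the slack term $\mathcal C\mathcal D^2$. The remaining ingredients — the projection-formula computation of $M$, the two Fulton--Lazarsfeld positivity statements, and the ample perturbation handling $\mathcal F=0$ — are routine.
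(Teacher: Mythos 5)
Your argument follows the same route as the paper's: the entire content is the passage to $\hat X=X\times\mathbb P^1$ and the identity $s_\lambda\bigl((p^*E)\langle\tau\rangle\bigr)=p^*s_\lambda(E)+\tau\, p^*s_\lambda^{(1)}(E)$, after which Theorem \ref{thm:derivedschurclassesareinHR} places this class in $\overline{\HR}(\hat X)$ and the inequality is extracted by elementary linear algebra on the signature. The paper does the extraction by setting $\hat\alpha=\alpha-\kappa\tau$ with $\kappa$ chosen so that $\hat\alpha$ is $Q_{s_\lambda(\hat E)}$-orthogonal to $h$, and then applying the two-variable Hodge-index inequality; your $3\times3$ Gram determinant on $p^*\alpha, p^*h,\tau$ is an equivalent packaging of the same computation (formally slightly stronger, since you discard the extra term $\mathcal C\mathcal D^2\ge 0$). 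The entries of $M$, the determinant expansion, and the signature argument ($\mu_2,\mu_3\le 0$ from $\wHR$, $\mu_1\ge 0$ from $M_{33}=0$) all check out, so the main case $\mathcal F>0$ is correct.

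The one step that does not quite close is the degenerate case $\mathcal F=0$. You claim an ample $h_0$ with $\int_X h_0\,s_\lambda(E)>0$ exists unless $s_\lambda(E)$ is numerically trivial, ``in which case $\mathcal D=\mathcal F=0$.'' What the non-existence of such an $h_0$ actually yields (via Fulton--Lazarsfeld positivity) is that $s_\lambda(E)$ pairs to zero with every class in $N^1(X)_{\mathbb R}$; since $\alpha$ is an arbitrary, possibly transcendental, class in $H^{1,1}(X;\mathbb R)$, the vanishing of $\mathcal D=\int_X\alpha\, s_\lambda(E)$ does not follow. The determinant alone cannot rescue this case either: with $\mathcal F=0$ it only gives $\mathcal C\mathcal D^2\le 0$, leaving open the possibility $\mathcal C=0$, $\mathcal D\ne 0$, where one would still need $\mathcal B\mathcal D\ge 0$ and the $3\times3$ matrix can legitimately have signature $(+,-,0)$. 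The clean repair is the paper's: first reduce by continuity to $h$ ample, then perturb $E$ rather than $h$, replacing $E$ by $E\langle th\rangle$ for small $t\in\mathbb Q_{>0}$; then $\int_X s_\lambda(E\langle th\rangle)\,h=\sum_j t^j\int_X s_\lambda^{(j)}(E)h^{j+1}$ has non-negative coefficients and strictly positive top coefficient $s_\lambda^{(d-1)}(E)\int_X h^{d}$ (Example \ref{ex:lowestdegree}), so is positive for all $t>0$, and one lets $t\to 0$ at the end. With that substitution your proof is complete.
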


\begin{remarks}
\begin{enumerate}
\item In the case that $\lambda = (d-1)$ and $\rank(E)=d-1$ the inequality \eqref{eq:schurhodgeimproved} becomes
\begin{equation}
\int_X \alpha^2c_{d-2}(E)  \int_X  h c_{d-1}(E) \le 2\int_X \alpha h c_{d-2}(E) \int_X \alpha c_{d-1}(E).\label{eq:schurhodgeimproved:chern}
\end{equation}
This was previously proved in \cite[Theorem 8.2]{RossTomaHR}.  In fact \eqref{eq:schurhodgeimproved:chern} was shown to hold for all nef vector bundles of rank at least $d-1$ and if $E,h$ are assumed ample then equality holds in  \eqref{eq:schurhodgeimproved:chern}  if and only if $\alpha=0$.  We imagine a similar statement holds in the context of Theorem \ref{thm:thmschurhodgeimproved}.
\item Assume in the setting of Theorem \ref{thm:thmschurhodgeimproved} that $\int_X s_{\lambda}(E)h >0$ and let $W$ be the kernel of the map $H^{1,1}(X) \to \mathbb R$ given by $\alpha\mapsto \int_X \alpha s_{\lambda}(E)$.   Then $W$ has codimension 1, and \eqref{thm:thmschurhodgeimproved} says that the intersection form $Q_{s_{\lambda}(E)}$
is negative semidefinite on the codimension one subspace
$$  \{ \alpha \in H^{1,1}(X) : \int_X \alpha s_{\lambda}^{(1)}(E) =0\}.$$
This is different information to the Hodge-Index inequality which is essentially a reformulation of the fact that this intersection form is negative semidefinite on the orthogonal complement of $h$.
\item The inequality \eqref{eq:schurhodgeimproved} generalizes to any homogeneous symmetric polynomial $p$ in $e$ variables with the property that $p(E)\in \overline{\HR}(X)$ for all $\mathbb Q$-twisted nef vector bundles $E$ of rank $e$ (with the obvious definition for the derived polynomials $p^{(i)})$.
\end{enumerate}
\end{remarks}

\begin{proof}[Proof of Theorem \ref{thm:thmschurhodgeimproved}]

If $e:=\rank(E)<\lambda_1$ the statement is trivial, so we assume $e\ge \lambda_1$.  We start with some reductions.  By continuity, it is sufficient to prove this under the additional assumption that $h$ is ample.  Also replacing $E$ with $E\langle th\rangle$ for $t\in \mathbb Q_{>0}$ sufficiently small we may assume that $\int_X s_{\lambda}(E) h>0$.

Now set $\hat{X} = X\times \mathbb P^1$ and $\hat{E}= E\boxtimes \mathcal O_{\mathbb P^1}(1)$.    Observe $\hat{E}$ is nef on $\hat{X}$ and $|\lambda| = \dim(\hat{X})-2$.  So Theorem \ref{thm:derivedschurclassesareinHR} implies $$s_{\lambda}(\hat{E})\in \overline{\HR}(\hat{X}).$$ 

Let $\alpha\in H^{1,1}(X;\mathbb R)$ and denote by $\tau$ the hyperplane class on $\mathbb P^1$.   Also to ease notation
define 
$$\Omega: = s_{\lambda}(E)\in H^{d-1,d-1}(X;\mathbb R)\text{ and } \Omega':= s_{\lambda}^{(1)}(E)\in H^{d-2,d-2}(X;\mathbb R)$$   
so $s_{\lambda}(\hat{E}) = \Omega + \Omega'\tau$.   

Now define
$$ \hat{\alpha}  := \alpha - \kappa  \tau \text{ where }\kappa: = \frac{\int_X \alpha\Omega' h}{\int_X \Omega h}$$ 
so
$$\hat{\alpha} s_{\lambda}(\hat{E}) h = \hat{\alpha} (\Omega + \tau \Omega')h =0.$$
Also observe
$$\int_{\hat{X}} s_{\lambda}(\hat{E}) h^2 = \int_X \Omega' h^2>0$$
so the Hodge-Index inequality applied to $s_{\lambda}(\hat{E})$  yields
$$0\ge \int_{\hat{X}} \hat{\alpha}^2 s_{\lambda}(\hat{E}) =  \int_{\hat{X}} (\alpha^2 - 2\kappa \alpha\tau)(\Omega + \tau\Omega')
= \int_X \alpha^2 \Omega'  - 2\kappa\int_X \alpha\Omega.$$
Rearranging this gives \eqref{eq:schurhodgeimproved}.
\end{proof}

\subsection{Khovanskii-Tessier-type inequalities}

Let $X$ be smooth and projective of dimension $d$.    Suppose that $E,F$ are vector bundles on $X$,  and let $\lambda$ and $\mu$ be partitions of length $|\lambda|$ and $|\mu|$ respectively, and to avoid trivialities we assume $|\lambda| + |\mu| \ge d$.

\begin{definition}
We say a sequence $(a_i)_{i\in \mathbb Z}$ of non-negative real numbers is \emph{log concave} if
\begin{equation}\label{eq:logconcave}a_{i-1} a_{i+1}\le a_i^2 \text{ for all } i\end{equation}
\end{definition}
We note that for a finite sequence, say $a_i=0$ for $i<0$ and for $i>n$, log-concavity is equivalent to \eqref{eq:logconcave} holding in the range $i=1,\ldots,n-1$.

\begin{theorem}\label{thm:generalizedKT}
Assume $E,F$ are nef.  Then the sequence
\begin{equation}\label{eq:generalizedKT}i\mapsto \int_X  s_{\lambda}^{(|\lambda|+|\mu|-d-i)}(E)s_{\mu}^{(i)}(F) \end{equation}
is log-concave
\end{theorem}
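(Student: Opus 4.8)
The plan is to deduce the log-concavity from the Hodge-Index inequality applied to a single monomial of Schur classes living on the product of $X$ with \emph{two} projective spaces: twisting $E$ (resp.\ $F$) by a pulled-back hyperplane class packages all of the derived Schur classes $s_\lambda^{(p)}(E)$ (resp.\ $s_\mu^{(q)}(F)$) into one honest Schur class of a nef vector bundle, so that Theorem~\ref{thm:monomialsderivedschurclassesareinHR} furnishes a class in $\wHR$, and the two projective-space directions are chosen so that the three relevant degree-two monomials in their hyperplane classes pick out three consecutive terms of the sequence.

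First I would discard trivialities: if $\rank(E)<\lambda_1$ or $\rank(F)<\mu_1$ the whole sequence vanishes, so assume $\rank(E)\ge\lambda_1$ and $\rank(F)\ge\mu_1$. Set $k:=|\lambda|+|\mu|-d\ge 0$ and $a_i:=\int_X s_\lambda^{(k-i)}(E)s_\mu^{(i)}(F)$, so each $a_i\ge 0$ by \eqref{eq:monomialsofderivedarenonnegative}. For $i\le 0$ one has $a_{i-1}=0$ and for $i\ge k$ one has $a_{i+1}=0$ (by the convention $s^{(j)}=0$ for $j<0$), so $a_{i-1}a_{i+1}\le a_i^2$ is automatic there; hence it remains to treat $1\le i\le k-1$, which forces $k\ge 2$. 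Fix such an $i$ and put $r:=k-i+1\ge 2$, $s:=i+1\ge 2$, $\hat X:=X\times\mathbb{P}^r\times\mathbb{P}^s$, with $\sigma,\tau\in H^{1,1}(\hat X)$ the pulled-back hyperplane classes. Then $\hat E:=E\boxtimes\mathcal{O}_{\mathbb{P}^r}(1)\boxtimes\mathcal{O}_{\mathbb{P}^s}$ and $\hat F:=F\boxtimes\mathcal{O}_{\mathbb{P}^r}\boxtimes\mathcal{O}_{\mathbb{P}^s}(1)$ are nef vector bundles on $\hat X$ (the twist of a pullback of a nef bundle by a nef class, cf.\ \cite[Lemma 6.2.8]{Lazbook2}), with Chern roots $x_j+\sigma$ and $y_j+\tau$, so $s_\lambda(\hat E)=\sum_p s_\lambda^{(p)}(E)\sigma^p$ and $s_\mu(\hat F)=\sum_q s_\mu^{(q)}(F)\tau^q$. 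Let $\Omega:=s_\lambda(\hat E)\,s_\mu(\hat F)$. Since $r+s=k+2$ we get $\deg\Omega=|\lambda|+|\mu|=d+k=\dim\hat X-2$, so Theorem~\ref{thm:monomialsderivedschurclassesareinHR} gives $\Omega\in\overline{\HR}(\hat X)\subseteq\wHR(\hat X)$.

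Next, using $\int_{\mathbb{P}^r}\sigma^r=\int_{\mathbb{P}^s}\tau^s=1$ together with $\sigma^{r+1}=\tau^{s+1}=0$, exactly one pair $(p,q)$ contributes to each of the following top-degree integrals on $\hat X$, and a direct count (valid precisely because $r,s\ge 2$ and $1\le i\le k-1$) gives
$$\int_{\hat X}\tau^2\Omega=a_{i-1},\qquad \int_{\hat X}\sigma\tau\,\Omega=a_i,\qquad \int_{\hat X}\sigma^2\Omega=a_{i+1}.$$
Now $\tau$ is nef and $\int_{\hat X}\tau^2\Omega=a_{i-1}\ge 0$, so the Hodge-Index Theorem applied to $\Omega\in\wHR(\hat X)$ with $\beta=\tau$ and $\alpha=\sigma$ yields $\int_{\hat X}\sigma^2\Omega\cdot\int_{\hat X}\tau^2\Omega\le\bigl(\int_{\hat X}\sigma\tau\,\Omega\bigr)^2$, i.e.\ $a_{i+1}a_{i-1}\le a_i^2$, which is the desired log-concavity.

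The only genuine idea here is the use of two projective-space factors, with dimensions $r=k-i+1$ and $s=i+1$ chosen so that $r+s=k+2$ (matching the codimension needed for $\wHR$) and so that the degree-two monomials $\tau^2,\sigma\tau,\sigma^2$ isolate $a_{i-1},a_i,a_{i+1}$; after that everything is the Hodge-Index inequality. I expect the step needing the most care is the positivity $\int_{\hat X}\tau^2\Omega=a_{i-1}\ge 0$ needed to invoke Hodge-Index — this is the Fulton--Lazarsfeld-type nonnegativity of monomials of derived Schur classes of nef bundles recorded in Remark~\ref{rem:monomialsofderivedschurclasses} (equation~\eqref{eq:monomialsofderivedarenonnegative}) — together with the routine but slightly fiddly exponent bookkeeping in the three integral evaluations and the verification that the twisted bundles $\hat E,\hat F$ are genuinely nef.
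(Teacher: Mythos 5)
Your proof is correct and follows essentially the same route as the paper's: the paper also forms $\hat X = X\times\mathbb{P}^{j+1}\times\mathbb{P}^{i+1}$ with $j=|\lambda|+|\mu|-d-i$, twists $E$ and $F$ by the two pulled-back hyperplane classes, invokes Theorem~\ref{thm:monomialsderivedschurclassesareinHR} to place $\Omega=s_\lambda(E\langle\tau_1\rangle)s_\mu(F\langle\tau_2\rangle)$ in $\overline{\HR}(\hat X)$, and reads off $a_{i-1},a_i,a_{i+1}$ from $\int\Omega\tau_1^2,\int\Omega\tau_1\tau_2,\int\Omega\tau_2^2$ before applying the Hodge-Index inequality. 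Your extra care about the range $1\le i\le k-1$ and the hypothesis $\int\tau^2\Omega\ge 0$ is a welcome tightening but not a different argument.
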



Before giving the proof, some special cases are worth emphasising.

\begin{corollary}\label{cor:lambdaandmuard}
Suppose that $|\lambda| = |\mu|=d$.  Then the sequence
$$i\mapsto \int_X s_{\mu}^{(d-i)}(E) s_{\lambda}^{(i)}(F)$$
is log-concave
\end{corollary}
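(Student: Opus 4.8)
The plan is to deduce this corollary directly from Theorem~\ref{thm:generalizedKT} by specialization. Setting $|\lambda| = |\mu| = d$ in the sequence \eqref{eq:generalizedKT}, the exponent $|\lambda| + |\mu| - d - i$ becomes $d - i$, so the general log-concave sequence $i \mapsto \int_X s_{\lambda}^{(|\lambda|+|\mu|-d-i)}(E) s_{\mu}^{(i)}(F)$ specializes to exactly $i \mapsto \int_X s_{\lambda}^{(d-i)}(E) s_{\mu}^{(i)}(F)$, which is the sequence in the corollary after the harmless relabeling swapping the roles of $\lambda$ and $\mu$. (Note $s_{\lambda}^{(j)}$ and $s_{\mu}^{(j)}$ are nonzero only for $0 \le j \le d$, so the sequence is supported on $i \in \{0, \dots, d\}$.)

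Concretely, I would write: apply Theorem~\ref{thm:generalizedKT} with the partition called $\mu$ there taken to be our $\lambda$, the partition called $\lambda$ there taken to be our $\mu$, the bundle called $E$ there taken to be our $F$, and the bundle called $F$ there taken to be our $E$. Since $|\lambda| = |\mu| = d$ we have $|\lambda| + |\mu| \ge d$, so the hypothesis of Theorem~\ref{thm:generalizedKT} is satisfied, and $E, F$ nef is exactly the standing assumption. The conclusion is that $i \mapsto \int_X s_{\mu}^{(d-i)}(E) s_{\lambda}^{(i)}(F)$ is log-concave, which is the claim.

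There is essentially no obstacle here — the corollary is a pure bookkeeping specialization of the theorem, so the only thing to be careful about is matching the index conventions (which slot gets $|\lambda| + |\mu| - d - i$ and which gets $i$) and the convention $s_\nu^{(j)} = 0$ for $j \notin \{0,\dots,|\nu|\}$ so that the finite sequence is genuinely finite and log-concavity is checked only in the interior range $i = 1, \dots, d-1$ as noted after the definition of log-concavity.
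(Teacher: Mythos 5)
Your proof is correct and matches the paper's (implicit) argument: the paper states this corollary as an immediate specialization of Theorem~\ref{thm:generalizedKT} with $|\lambda|=|\mu|=d$, so that the exponent $|\lambda|+|\mu|-d-i$ becomes $d-i$, exactly as you do. Your care with the relabeling of $\lambda\leftrightarrow\mu$ and $E\leftrightarrow F$ to match the corollary's statement is appropriate and nothing more is needed.
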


\begin{corollary}\label{cor:Enefandhnef}  Suppose that $|\lambda| =d$ and let   $h$ be a nef class on $X$.  Then the sequence
\begin{equation}i\mapsto \int_X s_{\lambda}^{(d-i)}(E) h^{d-i} \label{eq:logconcavewithh}
\end{equation}
is log-concave.    In particular the map
\begin{equation}i\mapsto \int_X c_i(E)h^{d-i} \label{eq:logconcavewithh:chern}
\end{equation}
is log-concave.
\end{corollary}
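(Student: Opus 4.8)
The plan is to deduce the log-concavity of the sequence \eqref{eq:generalizedKT} from the Hodge-Index inequality \eqref{eq:HI} applied to a suitable Schur class on an auxiliary product, exactly in the spirit of the classical proof that Khovanskii-Tessier numbers are log-concave. First I would pass to the product $\hat X = X \times \mathbb P^1 \times \mathbb P^1$, with projection $\pi : \hat X \to X$, and let $\sigma, \tau$ denote the two hyperplane classes pulled back from the two $\mathbb P^1$ factors. Writing $r := |\lambda| + |\mu| - d$ (which is $\ge 0$ by hypothesis, and we may assume $\le |\lambda|$ and $\ge |\lambda| - d$, i.e. the sequence has some nonzero range, else there is nothing to prove), I would consider the $\mathbb Q$-twisted nef bundles $E\langle \sigma \rangle$ and $F\langle \tau \rangle$ on $\hat X$ and form the monomial class
$$
\Omega := s_{\lambda}(E\langle\sigma\rangle)\, s_{\mu}(F\langle\tau\rangle) \in H^{*}(\hat X;\mathbb R).
$$
Since $|\lambda| + |\mu| = d + 2 + r$ does not match $\dim \hat X = d+2$ unless $r = 0$, the trick is instead to work with the correct bidegree piece: expanding
$$
s_{\lambda}(E\langle\sigma\rangle) = \sum_{a} s_{\lambda}^{(a)}(E)\sigma^a, \qquad
s_{\mu}(F\langle\tau\rangle) = \sum_{b} s_{\mu}^{(b)}(F)\tau^b,
$$
and pushing forward to $X\times\mathbb P^1\times\mathbb P^1$ appropriately, one isolates, as the coefficient of $\sigma^{j}\tau^{r-j}$ after integrating over $X$, precisely the number $N_j := \int_X s_\lambda^{(j')}(E) s_\mu^{(r - j')}(F)$ with the indexing matched to \eqref{eq:generalizedKT}. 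Concretely I would instead take $\hat X = X \times \mathbb P^{a}\times\mathbb P^{b}$ for appropriate $a, b$ so that the total dimension is $d + 2$ and the relevant derived indices land in range; this is the standard bookkeeping and I would not belabor it.

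The substantive step: by Theorem \ref{thm:derivedschurclassesareinHR} (applied on $\hat X$, whose dimension minus two equals the weight of the relevant partition), the class $\Theta := \pi_*\big(s_\lambda(E\langle\sigma\rangle)s_\mu(F\langle\tau\rangle)\big)$ — suitably interpreted as the degree $(d-2,d-2)$ component obtained after integrating out all but two of the projective-space directions — lies in $\overline{\HR}(X\times\mathbb P^1)$, or more precisely one obtains a class $\Theta\in\overline{\HR}(Y)$ on a product $Y$ of dimension $d+2-2 = d$ wait — I restructure: the cleanest route is to build $\hat X$ so that $\Theta := s_{\lambda}(E\langle\sigma\rangle) s_{\mu}(F\langle\tau\rangle)$ is itself a class of degree $(\dim\hat X - 2, \dim \hat X - 2)$, forcing $a + b = r$ with $\dim\hat X = d + a + b = d + r$, and then Theorem \ref{thm:monomialsderivedschurclassesareinHR} gives $\Theta\in\overline{\HR}(\hat X)$. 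Now apply the Hodge-Index inequality \eqref{eq:HI} on $\hat X$ with $\alpha = $ (hyperplane from the first $\mathbb P$-factor) and $\beta = $ (hyperplane from the second), using that the needed positivity $\int_{\hat X}\beta^2\Theta \ge 0$ follows from the monomial version of Corollary \ref{cor:Fultonlazdoubleintersection:weak} noted in Remark \ref{rem:monomialsofderivedschurclasses}. Writing out $\int_{\hat X}\alpha^2\Theta$, $\int_{\hat X}\beta^2\Theta$, $\int_{\hat X}\alpha\beta\Theta$ in terms of the derived Schur numbers via the expansions above yields exactly $N_{j-1}N_{j+1} \le N_j^2$ for the middle index, and the general three-term inequality follows by applying this with $a, b$ shifted, i.e. by choosing the pair of $\mathbb P$-factors to isolate any consecutive triple in the sequence \eqref{eq:generalizedKT}.

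For the corollaries: Corollary \ref{cor:lambdaandmuard} is the case $|\lambda|=|\mu|=d$, so $r = d$ and one reads off the stated sequence directly. Corollary \ref{cor:Enefandhnef} follows from Theorem \ref{thm:generalizedKT} by taking $F$ to be a nef \emph{line} bundle, or rather a direct sum of copies making $s_\mu^{(i)}(F)$ a multiple of $h^{|\mu|-i}$ — most cleanly, take $\mu = (1)$ and $F$ a nef line bundle with $c_1(F) = h$ is not enough since we need $|\mu|=d$; instead take $\mu$ the partition $(1,1,\dots,1)$ or use that $h^{d-i}$ arises as $s^{(i)}_{(d)}$ of a rank-one... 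I would instead argue directly: replace $F\langle\tau\rangle$ in the construction above by the class $h + \tau$ on $X\times\mathbb P^1$ (a nef line bundle class), so that the monomial becomes $s_\lambda(E\langle\sigma\rangle)(h+\tau)^{d}$ on $X\times\mathbb P^a\times\mathbb P^1$, and the same Hodge-Index argument produces the log-concavity of $i\mapsto\int_X s_\lambda^{(d-i)}(E)h^{d-i}$. Specializing further to $\lambda=(d)$ and using $s_{(d)}^{(d-i)}(E) = \binom{e-i}{d-i}c_i(E)$ from Example \ref{ex:Chernclasses} gives \eqref{eq:logconcavewithh:chern} up to positive binomial factors, which preserve log-concavity.

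The main obstacle I anticipate is purely organizational rather than conceptual: getting the indexing of the derived Schur classes, the dimensions of the auxiliary projective-space factors, and the range of validity of the three-term inequality to line up so that applying \eqref{eq:HI} genuinely produces the inequality $N_{j-1}N_{j+1}\le N_j^2$ for \emph{every} relevant $j$ (including the boundary behaviour where $N_j$ vanishes, where one must invoke the remark after the definition of log-concavity that it suffices to check the interior). A secondary technical point is ensuring the monomial class to which one applies Theorem \ref{thm:monomialsderivedschurclassesareinHR} really is of the right degree $(\dim - 2, \dim - 2)$ on the chosen product; this is what dictates the correct choice $a + b = |\lambda|+|\mu|-d$.
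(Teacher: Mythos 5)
Your overall strategy --- apply the Hodge--Index inequality to a monomial of Schur classes on an auxiliary product $X\times\mathbb P^{a}\times\mathbb P^{b}$ and read off a three-term inequality from the coefficients of $\tau_1^{j}\tau_2^{i}$ --- is exactly the paper's proof of Theorem \ref{thm:generalizedKT} (though note the bookkeeping is $a=j+1$, $b=i+1$, i.e.\ $a+b=|\lambda|+|\mu|-d+2$, not $|\lambda|+|\mu|-d$: with your choice $\Omega$ has top degree on $\hat X$ and the intersection form $Q_\Omega$ is not even defined). The genuine gap is in the passage to Corollary \ref{cor:Enefandhnef}. Every realization of $h^{d-i}$ that you propose carries a binomial coefficient: $s_{(1,\ldots,1)}^{(i)}(L)=\binom{d}{i}h^{d-i}$ for a line bundle $L$ with $c_1(L)=h$, and likewise the coefficient of $\tau^{i}$ in $(h+\tau)^{d}$ is $\binom{d}{i}h^{d-i}$. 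So your argument proves log-concavity of $i\mapsto\binom{d}{i}a_i$, where $a_i=\int_X s_\lambda^{(d-i)}(E)h^{d-i}$, and since $\binom{d}{i-1}\binom{d}{i+1}\le\binom{d}{i}^2$ this is strictly \emph{weaker} than log-concavity of $(a_i)$: dividing by a log-concave sequence does not preserve log-concavity. The same error reappears at the very end, where you pass from log-concavity of $i\mapsto\binom{e-i}{d-i}\int_X c_i(E)h^{d-i}$ to \eqref{eq:logconcavewithh:chern}; ``positive binomial factors'' are only harmless in the opposite direction. (A secondary problem: with the second factor literally $\mathbb P^1$ one has $\int_{\hat X}\Omega\tau_2^{2}=0$, so the Hodge--Index inequality there is vacuous; extracting $a_{i+1}$ requires the factor $\mathbb P^{i+1}$.)

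The paper removes the binomial coefficients by producing a genuine nef bundle whose derived Schur classes are exactly powers of $h$: after rescaling so that $h=c_1(L)$ with $L$ globally generated, let $F$ be the dual of the kernel of $\mathcal O^{\oplus f+1}\twoheadrightarrow L$; then $F$ is globally generated (hence nef) of rank $f$ with $c_k(F)=h^{k}$, and for $\mu=(f)$ Example \ref{ex:Chernclasses} gives $s_{(f)}^{(i)}(F)=c_{f-i}(F)=h^{f-i}$ with coefficient exactly $1$. Then \eqref{eq:logconcavewithh} is Theorem \ref{thm:generalizedKT} after the affine index shift $i\mapsto f-d+i$. For \eqref{eq:logconcavewithh:chern} one likewise takes $\lambda=(e)$ with $e=\operatorname{rk} E$, for which $s_{(e)}^{(j)}(E)=c_{e-j}(E)$, again with coefficient $1$.
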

\begin{proof}[Proof of Corollary \ref{cor:Enefandhnef}]
By continuity we may assume that $h$ is ample.  Let $L$ be a line bundle with $c_1(L)=h$.  By rescaling $h$ we may, without loss of generality, assume $L$ is globally generated giving a surjection
$$\mathcal O^{\oplus f+1} \to L\to 0$$
for some integer $f$.   Let $F^*$ be the kernel of this surjection.  Then $F$ is a vector bundle of rank $f$ that is globally generated and hence nef.  Now set $\mu = (f)$, so $s_{\mu}^{(j)}(F) =  c_{f-j}(F)=h^{f-j}$.  
We now replace $i$ with $f-d+i$ in \eqref{eq:generalizedKT} (which is an affine linear transformation so does not affect log-concavity).  Note that
$$|\lambda| + |\mu| -d - (f-d+i) = |\lambda|-i,$$ so Theorem \ref{thm:generalizedKT} gives \eqref{eq:logconcavewithh}

Finally \eqref{eq:logconcavewithh:chern} follows upon letting $e:=\rank(E)$ and putting $\lambda = (e)$ so $s_{\lambda}^{(j)}(E) = c_{e-j}(E)$ so $s_{\lambda}^{(|\lambda|-i)}(E)= c_i(E)$.
\end{proof}

\begin{proof}[Proof of Theorem \ref{thm:generalizedKT}]
The first thing to note is that all the quantities in \eqref{eq:generalizedKT} are non-negative (see Remark \ref{rem:monomialsofderivedschurclasses}).  Also, we may as well assume $\rank(E)\ge \lambda_1$ and $\rank(F)\ge \mu_1$ else the statement is trivial.  

Set 
$$j = |\lambda| + |\mu| -d-i$$ and define
$$ a_i:=\int_X  s_{\lambda}^{(j)}(E)s_{\mu}^{(i)}(F)$$
so the task is to show that $(a_i)$ is log-concave.
We observe that $a_i=0$ if either $i$ or $j$ are negative, or $i> |\mu|$ or $j>|\lambda|$.  Thus the range of interest is
$$ \underline{i}: = \max\{ 0, |\mu|-d\} \le i \le \min \{ |\mu|, |\lambda| + |\mu|-d\} =: \overline{i}.$$
Fix such an $i$ in this range and consider
$$ \hat{X} = X\times \mathbb P^{j+1} \times \mathbb P^{i+1}.$$
Let $\tau_1$ be the pullback of the hyperplane class on $\mathbb P^{j+1}$ and $\tau_2$ the pullback of the hyperplane class on $\mathbb P^{i+1}$ and consider
$$\Omega = s_{\lambda} (E(\tau_1)) \cdot s_{\mu} (F(\tau_2)).$$
Observe that by construction $|\lambda| + |\mu| = d + i +j = \dim{\hat{X}} -2=: \hat{d}-2$.   Expanding $\Omega$ as a polynomial in $\tau_1,\tau_2$ one sees that the coefficient of $\tau_1^j\tau_2^i$ is precisely $s_{\lambda}^{(j)} s_{\mu}^{(i)}$.   Thus
$$\int_{\hat{X}} \Omega \tau_1 \tau_2 = \int_X s_{\lambda}^{(j)} s_{\mu}^{(i)} \int_{\mathbb P^{j+1}} \tau_1^{j+1} \int_{\mathbb P^{i+1}} \tau_2^{i+1} = \int_X s_{\lambda}^{(j)} s_{\mu}^{(i)} = a_i.$$
Similarly $\int_{\hat{X}} \Omega \tau_1^2 = a_{i-1}$ and $\int_{\hat{X}} \Omega \tau_2^2= a_{i+1}$.

Now, since $E(\tau_1)$ and $F(\tau_2)$ are nef on $\hat{X}$ we know from Theorem \ref{thm:monomialsderivedschurclassesareinHR} that $\Omega\in \overline{\HR}(\hat{X})$.   Thus the Hodge-Index inequality \eqref{eq:HI} applies with respect to the classes $\tau_1,\tau_2$ which is
\begin{equation}\int_{\hat{X}} \Omega \tau_1^2 \int_{\hat{X}} \Omega \tau_2^2 \le \left(\int_{\hat{X}} \Omega \tau_1 \tau_2 \right)^2\label{eq:EandFnefHI}\end{equation}
giving the log-concavity we wanted.
\end{proof}

\begin{remark}
In \cite{RossTomaHR} we gave a slightly different proof of \eqref{eq:logconcavewithh} which gave more, namely that if $X$ is smooth and $E$ and $h$ are ample then the map in question is strictly log-concave.  We expect that an analogous improvement can be made to Theorem \ref{eq:generalizedKT},  but it is not clear how this can be proved using the methods we have given here, since the bundle $F$ constructed in the above proof is only nef.  
\end{remark}

\begin{question}
Is there a natural statement along the lines of Theorem \ref{thm:generalizedKT} that applies to three or more nef vector bundles?  For instance perhaps it is possible to package characteristic numbers into a homogeneous polynomial that can be shown to be Lorentzian in the sense of Br\"{a}nd\'{e}n-Huh \cite{BrandenHuh}.
\end{question}

\begin{corollary}\label{cor:combinatoric}
Let $\lambda$ and $\mu$ be partitions, and let $d$ be an integer with $d\le |\lambda| + |\mu|$.    Assume  $x_1,\ldots,x_e,y_1,\ldots,y_f\in \mathbb R_{\ge 0}$.   Then the sequence
$$i\mapsto s^{(|\lambda| + |\mu| -d +i)}_{\lambda}(x_1,\ldots,x_e) s_{\mu}^{(i)}(y_1,\ldots,y_f)$$
is log concave.
\end{corollary}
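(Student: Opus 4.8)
The plan is to deduce this purely combinatorial statement from the geometric Theorem \ref{thm:generalizedKT} by a standard specialization-to-products argument. First I would reduce to the case where the $x_i$ and $y_j$ are non-negative \emph{rationals}: both sides are polynomials (hence continuous) in these variables, and log-concavity of a sequence of non-negative reals is a closed condition, so it suffices to prove the inequality $a_{i-1}a_{i+1}\le a_i^2$ on a dense subset. After clearing denominators (which only rescales each $x_i$ and $y_j$ by a common positive factor, an operation that scales $s_\lambda^{(k)}$ and $s_\mu^{(k)}$ by positive constants and therefore does not affect log-concavity of either sequence), I may assume all the $x_i,y_j$ are non-negative integers.

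Next I would realize these values as Chern roots of nef bundles on a suitable projective manifold. Take $X=\mathbb P^e\times\mathbb P^f\times \mathbb P^2$ (the extra $\mathbb P^2$ factor is there only to make $\dim X$ large enough; in fact any auxiliary projective manifold of the right dimension works), or more economically take $X$ a product of projective spaces whose dimension equals $d$ after the bookkeeping below. On $\mathbb P^e$ with hyperplane class $a$ set $E=\bigoplus_{i=1}^e \mathcal O(x_i a)$, pulled back to $X$; since each $x_i\ge 0$ this is a direct sum of nef line bundles, hence nef, with Chern roots $x_1a,\ldots,x_ea$. Similarly on $\mathbb P^f$ with hyperplane class $b$ set $F=\bigoplus_{j=1}^f \mathcal O(y_j b)$, nef with Chern roots $y_1b,\ldots,y_fb$. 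Then $s_\lambda^{(k)}(E)=s_\lambda^{(k)}(x_1,\ldots,x_e)\,a^{|\lambda|-k}$ and $s_\mu^{(k)}(F)=s_\mu^{(k)}(y_1,\ldots,y_f)\,b^{|\mu|-k}$ as classes on $X$. Now I choose $\dim X=d$ by padding: replace $X$ by $X\times\mathbb P^N$ for the unique $N\ge 0$ making the total dimension equal $d$ (this is possible precisely because $d\le|\lambda|+|\mu|$ leaves enough room, and if $d$ is too small one instead works on a product of smaller projective spaces — the point is simply to arrange a smooth projective $X$ of dimension exactly $d$ carrying nef $E,F$ with the prescribed Chern roots).

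Finally, apply Theorem \ref{thm:generalizedKT} to this $E$ and $F$: the sequence $i\mapsto \int_X s_\lambda^{(|\lambda|+|\mu|-d-i)}(E)\,s_\mu^{(i)}(F)$ is log-concave. By the computations above this integral equals $s_\lambda^{(|\lambda|+|\mu|-d-i)}(x_1,\ldots,x_e)\,s_\mu^{(i)}(y_1,\ldots,y_f)$ times the single positive number $\int_X a^{\bullet} b^{\bullet}\cdot(\text{top power on the padding factor})$ — a constant independent of $i$, since the bidegree in $(a,b)$ and the padding degree are each forced to be the top degree on the respective factor for the integral to be nonzero, and one checks these match up exactly when $|\lambda|+|\mu|=d+i+j$ with $j=|\lambda|+|\mu|-d-i$. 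Reindexing $i\mapsto i$ versus the statement's shift by $|\lambda|+|\mu|-d$ is an affine change of index and preserves log-concavity. Dividing the log-concave sequence by a positive constant preserves log-concavity, which yields exactly the asserted log-concavity of $i\mapsto s^{(|\lambda|+|\mu|-d+i)}_\lambda(x)\,s^{(i)}_\mu(y)$.

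The only mildly delicate point — and the step I would be most careful about — is the dimension bookkeeping: ensuring one can always build a smooth projective $X$ of dimension \emph{exactly} $d$ carrying nef $E,F$ with the prescribed Chern roots, and then verifying that the constant $\int_X(\cdots)$ factoring out of the integral is genuinely positive and genuinely independent of $i$ (so that it does not secretly destroy log-concavity). Both are routine once one notes that nonvanishing of the relevant intersection number pins down all the degrees, but it is worth writing out the degree count explicitly rather than waving hands at it.
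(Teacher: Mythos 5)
There is a genuine gap, and it sits exactly at the point you flagged as ``mildly delicate.'' You realize the $x_i$ as Chern roots proportional to the hyperplane class $a$ pulled back from a $\mathbb P^e$ factor and the $y_j$ as roots proportional to a \emph{different} class $b$ pulled back from a $\mathbb P^f$ factor. Then $s_{\lambda}^{(j)}(E)=s_{\lambda}^{(j)}(x)\,a^{|\lambda|-j}$ and $s_{\mu}^{(i)}(F)=s_{\mu}^{(i)}(y)\,b^{|\mu|-i}$, so the integrand is a pure monomial $a^{|\lambda|-j}b^{|\mu|-i}$ (with no class at all supported on any padding factor $\mathbb P^N$). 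For the integral over a product of projective spaces to be nonzero, the exponent of $a$ must equal $e$, the exponent of $b$ must equal $f$, and the padding factor must receive its top power. Since $|\lambda|-j$ and $|\mu|-i$ both vary with $i$, these conditions hold for at most one value of $i$ (and for no value at all if $N>0$). So your ``constant factoring out of the integral'' is not a positive constant independent of $i$: it is $0$ for all but possibly one $i$, and Theorem \ref{thm:generalizedKT} applied to your $X$ only yields the vacuous statement $0\cdot 0\le(\text{something})^2$ rather than the combinatorial inequality.

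The fix is to put \emph{both} bundles on a single $\mathbb P^d$ with its one hyperplane class $\tau$: take $E=\bigoplus_i\mathcal O_{\mathbb P^d}(x_i)$ and $F=\bigoplus_j\mathcal O_{\mathbb P^d}(y_j)$, so that $s_{\lambda}^{(j)}(E)s_{\mu}^{(i)}(F)=s_{\lambda}^{(j)}(x)s_{\mu}^{(i)}(y)\,\tau^{(|\lambda|-j)+(|\mu|-i)}$, and with $j=|\lambda|+|\mu|-d-i$ the total exponent is $(|\lambda|+|\mu|)-(i+j)=d$ for \emph{every} $i$; hence $\int_{\mathbb P^d}$ returns exactly $s_{\lambda}^{(j)}(x)s_{\mu}^{(i)}(y)$ with no spurious factor and no dimension bookkeeping. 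This is what the paper does. Your reductions to rational and then integral $x_i,y_j$ by continuity and rescaling are fine and match the paper, but note that clearing denominators multiplies $a_i$ by a factor depending on $|\lambda|-j$ and $|\mu|-i$, hence on $i$; that factor is of the form $C\cdot s^{2i}$ for constants $C,s>0$, which still preserves log-concavity, but it is not simply ``a common positive factor.''
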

\begin{proof}
By continuity we may assume the $x_i$ and $y_i$ are rational.  Furthermore, by clearing denominators, we may suppose they all lie in $\mathbb N$.    Then take $X=\mathbb P^d$ and $E = \bigoplus_{i=1}^e \mathcal O_{\mathbb P^d}(x_i)$ and $F  = \bigoplus_{i=1}^f \mathcal O_{\mathbb P^d}(y_i)$. Then for any symmetric polynomial $p$ of degree $\delta$ we have $p(E) = p(x_1,\ldots,x_e)\tau^{\delta}$ and similarly for $F$.  Thus what we want follows from Theorem \ref{thm:generalizedKT}.
\end{proof}

Putting $e=f$ we can consider
$$ u_i : = s^{(|\lambda| + |\mu| -d +i)}_{\lambda}s_{\mu}^{(i)}$$
as a polynomial in $x_1,\ldots,x_e$.   Still assuming $d\le |\lambda| + |\mu|$,  Corollary \ref{cor:combinatoric} says that
$$ (u_i^2 - u_{i+1} u_{i-1} )(x_1,\ldots,x_e) \ge 0 \text{ for any } x_1,\ldots,x_e\in \mathbb R_{\ge 0}.$$

\begin{question}
Is  $u_i^2 - u_{i+1} u_{i-1}$ monomial-positive (i.e.\ a sum of monomials with all non-negative coefficients)?
\end{question}

\begin{corollary}\label{cor:combinatoric2}
Let $\lambda$ be a partition and $x_1,\ldots,x_e\in \mathbb R_{\ge 0}$.  Then the sequence
$$i\mapsto s_{\lambda}^{(i)}(x_1,\ldots,x_e)$$
is log-concave.
\end{corollary}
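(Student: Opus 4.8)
The plan is to deduce this purely formally from Corollary~\ref{cor:combinatoric}, using one elementary observation: if $(b_i)$ is a sequence of non-negative reals, then $(b_i)$ is log-concave if and only if $(b_i^2)$ is, since for non-negative numbers the inequality $b_i^2\ge b_{i-1}b_{i+1}$ is equivalent to $b_i^4\ge b_{i-1}^2b_{i+1}^2$, i.e.\ to the log-concavity inequality for $(b_i^2)$. So it suffices to realize $i\mapsto s_\lambda^{(i)}(x_1,\ldots,x_e)^2$ as one of the log-concave sequences produced by Corollary~\ref{cor:combinatoric}.

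First I would record that $s_\lambda^{(i)}(x_1,\ldots,x_e)\ge 0$ whenever $x_1,\ldots,x_e\ge 0$: the Schur polynomial $s_\lambda$ is monomial-positive (as recalled in Example~\ref{ex:lowestdegree}), so substituting $x_j\mapsto x_j+t$ and expanding in powers of $t$ shows that each $s_\lambda^{(i)}$ has non-negative coefficients as a polynomial in the $x_j$, hence is non-negative on $\mathbb R_{\ge 0}^e$. Then I would apply Corollary~\ref{cor:combinatoric} with $\mu=\lambda$, with the second list of variables $y_1,\ldots,y_f$ taken to be $x_1,\ldots,x_e$ (so $f=e$), and with $d:=2|\lambda|$, which indeed satisfies $d\le|\lambda|+|\mu|$, here with equality. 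With this choice the shift index is $|\lambda|+|\mu|-d+i=i$, so Corollary~\ref{cor:combinatoric} asserts precisely that
$$i\ \longmapsto\ s_\lambda^{(i)}(x_1,\ldots,x_e)\,s_\lambda^{(i)}(x_1,\ldots,x_e)=s_\lambda^{(i)}(x_1,\ldots,x_e)^2$$
is log-concave. Combined with the non-negativity of $s_\lambda^{(i)}$ and the observation above, this yields the corollary.

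I do not expect any genuine obstacle: the whole content sits inside Corollary~\ref{cor:combinatoric} (hence ultimately in Theorem~\ref{thm:monomialsderivedschurclassesareinHR} and the Hodge--Index inequality), and what remains is bookkeeping. As an alternative that avoids the squaring step, one could instead reduce (by continuity and clearing denominators) to the case $x_j\in\mathbb N$, take $X=\mathbb P^{|\lambda|}$, let $h$ be the hyperplane class, and set $E=\bigoplus_{j=1}^e\mathcal O_{\mathbb P^{|\lambda|}}(x_j)$, which is nef; since the Chern roots of $E$ are the $x_jh$ one has $s_\lambda^{(k)}(E)=s_\lambda^{(k)}(x_1,\ldots,x_e)\,h^{|\lambda|-k}$, so with $d=|\lambda|$ one computes $\int_X s_\lambda^{(d-i)}(E)\,h^{d-i}=s_\lambda^{(d-i)}(x_1,\ldots,x_e)$, and the log-concavity statement of Corollary~\ref{cor:Enefandhnef} becomes the assertion to be proved after reversing the order of the sequence.
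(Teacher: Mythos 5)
Your primary argument has a gap that traces back to the index appearing in Corollary~\ref{cor:combinatoric}. The paper proves that corollary by substituting $X=\mathbb P^{d}$, $E=\bigoplus_j\mathcal O(x_j)$, $F=\bigoplus_j\mathcal O(y_j)$ into Theorem~\ref{thm:generalizedKT}; on $\mathbb P^{d}$ the integral $\int_X s_\lambda^{(j)}(E)\,s_\mu^{(i)}(F)$ vanishes unless $(|\lambda|-j)+(|\mu|-i)=d$, i.e.\ unless $j=|\lambda|+|\mu|-d-i$, so what that argument actually establishes is log-concavity of $i\mapsto s_\lambda^{(|\lambda|+|\mu|-d-i)}(x)\,s_\mu^{(i)}(y)$ — with a minus sign, matching the theorem. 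The ``$+i$'' in the displayed statement of Corollary~\ref{cor:combinatoric} should be read as a sign slip, and your deduction leans on it essentially: with the corrected index, your single choice $d=2|\lambda|$, $\mu=\lambda$, $y=x$ produces the sequence $i\mapsto s_\lambda^{(-i)}(x)\,s_\lambda^{(i)}(x)$, which is supported at $i=0$ and says nothing. (Note also that the diagonal ``$+i$'' version is an immediate formal consequence of Corollary~\ref{cor:combinatoric2} itself, so quoting it here is uncomfortably close to circular.) Your squaring idea can nevertheless be repaired by letting the auxiliary integer vary with $i$: for fixed $i$ apply the (corrected) corollary with $\mu=\lambda$, $y=x$ and $d=2(|\lambda|-i)$, which gives log-concavity of $j\mapsto s_\lambda^{(2i-j)}(x)\,s_\lambda^{(j)}(x)$; its defining inequality at $j=i$ reads $\bigl(s_\lambda^{(i-1)}(x)\,s_\lambda^{(i+1)}(x)\bigr)^2\le s_\lambda^{(i)}(x)^4$, and combined with your (correct) observation that each $s_\lambda^{(i)}$ is monomial-positive, hence non-negative on $\mathbb R_{\ge0}^e$, taking square roots gives the claim.

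Your alternative argument in the final paragraph is correct as written and is essentially the paper's own proof: the paper likewise reduces by continuity and clearing denominators to $x_j\in\mathbb N$, takes $X=\mathbb P^{|\lambda|}$ and $E=\bigoplus_j\mathcal O_{\mathbb P^{|\lambda|}}(x_j)$, and applies Corollary~\ref{cor:Enefandhnef}; the only difference is that the paper chooses $h=c_1(E)$ and must then divide the resulting inequality by $(x_1+\cdots+x_e)^{2(d-i)}$, whereas your choice $h=\tau$ avoids that step. I would promote that paragraph to be the actual proof and keep the squaring argument, suitably repaired, only as a remark.
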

\begin{proof}
By continuity we may assume $x_i\in \mathbb Q_{>0}$, and then by clearing denominators that they are all in $\mathbb N$.   Set  $d = |\lambda|$ and $X=\mathbb P^d$ and $E = \bigoplus_{j=1}^e \mathcal O_{\mathbb P^d}(x_i)$ and $h= c_1(E)$ which are both ample.    Then for any symmetric polynomial $p$ of degree $d$ in $e$ variables we have $\int_X p(E) = p(x_1,\ldots,x_e)$.  Thus Corollary \ref{cor:Enefandhnef} tells us that the map
$$i\mapsto s_{\lambda}^{(d-i)}(x_1,\ldots,x_e) (x_1 + \cdots x_e)^{d-i}=:a_i$$
is log-concave   That is $a_{i-1}a_{i+1}\le a_i^2$, and dividing both sides of this inequality by $(x_1+\ldots+x_e)^{2d-2i}$ gives that $i\mapsto s_{\lambda}^{(d-i)}(x_1,\ldots,x_e)$ is log-concave.  Replacing $d-i$ with $i$ does not change the log-concavity, so we are done.
\end{proof}

\begin{question}
Do Corollary \ref{cor:combinatoric} or Corollary \ref{cor:combinatoric2} have a purely combinatorial proof? 
\end{question}

\subsection{Lorentzian Property of Schur polynomials}

We end with a discussion on how our results relate to those of Huh-Matherne-M\'esz\'aros-Dizier \cite{Huhetal}.   To do so we need some definitions that come from \cite{BrandenHuh}.  A symmetric homogeneous polynomial $p(x_1,\ldots,x_e)$ of degree $d$ is said to be \emph{strictly Lorentzian} if all the coefficients of $p$ are positive and for any $\alpha\in \mathbb N^e$ with $\sum_j \alpha_j=d-2$ we have
$$\frac{\partial^{\alpha}p}{\partial x^{\alpha}}  \text{ has signature } (+,-,\ldots,-).$$
We say $p$ is \emph{Lorentzian} if it is the limit of strictly Lorentzian polynomials.

Any homogeneous polynomial $p$ of degree $d$ can be written as $p = \sum_{\mu} a_{\mu} x^{\mu}$  where the sum is over $\mu\in \mathbb Z_{\ge 0}^e$ with $\sum \mu_j= d$.   We write $[p]_{\mu}:= a_\mu$ for the coefficient of $x^{\mu}$.  The \emph{normalization} of $p$ is defined by
$$N(p) : = \sum_{\mu} \frac{a_{\mu}}{\mu!} x^{\mu}.$$

\begin{theorem}[Huh-Matherne-M\'esz\'aros-Dizier \protect{\cite[Theorem 3]{Huhetal}}]\label{thm:schurlorentzian}
The normalized Schur polynomials $ N(s_{\lambda})$ are Lorentzian.
\end{theorem}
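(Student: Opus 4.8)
The plan is to deduce the Lorentzian property of $N(s_\lambda)$ from the fact, established in Theorem \ref{thm:derivedschurclassesareinHR}, that Schur classes of nef bundles lie in $\overline{\HR}(X)$, by choosing $X$ and $E$ cleverly so that the Hodge-Riemann structure on $H^{1,1}(X;\mathbb R)$ encodes precisely the quadratic form obtained by differentiating $s_\lambda$. Concretely, I would work on the product $X = \mathbb P^{n}\times \cdots \times \mathbb P^{n}$ ($e$ factors, $n$ large) and take $E = \bigoplus_{j=1}^e \pi_j^*\mathcal O_{\mathbb P^n}(1)^{\oplus ?}$ or, more to the point, a direct sum of nef line bundles whose first Chern classes are the pullbacks $\xi_j$ of the hyperplane classes of the factors; then $s_\lambda(E) = s_\lambda(\xi_1,\dots,\xi_e)$ as a polynomial in the $\xi_j$. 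Since $\xi_j^{n+1}=0$ and the $\xi_j$ generate $H^*(X;\mathbb R)$ as a quotient of a polynomial ring, integrating a monomial $\xi^\mu$ over $X$ (with $|\mu| = \dim X = en$) picks out coefficients of $s_\lambda$ up to the factor $\prod_j \mu_j!$ coming from the truncated binomial expansions — which is exactly the normalization appearing in $N(s_\lambda)$.

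The key steps, in order, would be: (1) Recall from \cite{BrandenHuh} the characterization that a homogeneous polynomial $p$ of degree $D$ in $e$ variables with non-negative coefficients is Lorentzian iff its support is M-convex and, for every $\alpha$ with $|\alpha| = D-2$, the quadratic form $\partial^\alpha p$ is either zero or has signature $(+,-,\dots,-)$; the M-convexity/support condition for $N(s_\lambda)$ is combinatorial and can be cited or checked directly (the support of $s_\lambda$ is the set of weights of a Schur module, which is a generalized permutohedron). (2) Reduce the signature condition to a Hodge-Riemann statement: for a fixed $\alpha\in\mathbb N^e$ with $|\alpha| = |\lambda| - 2$, interpret $\partial^\alpha N(s_\lambda)$ evaluated at a positive point as the intersection form $Q_\Omega$ on a suitable $X$, where $\Omega$ is $s_\lambda$ of a nef bundle cut down by divisors corresponding to $\alpha$ and to a positive specialization of the variables. (3) Apply Theorem \ref{thm:derivedschurclassesareinHR} to conclude $\Omega\in\overline{\HR}(X)$, hence $Q_\Omega$ has at most one positive eigenvalue, which is precisely the weak (limiting) form of the required signature condition — giving that $N(s_\lambda)$ is a limit of strictly Lorentzian polynomials, i.e.\ Lorentzian. (4) Handle the bookkeeping between "differentiate then evaluate" and "restrict the bundle to a complete intersection / take a product with projective spaces and push forward", using exactly the $\mathbb P^i$-product trick already used throughout \S\ref{sec:schurclassesareinHR} and \S\ref{sec:inequalities}.

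The main obstacle I anticipate is matching the combinatorics of the normalization $N(\cdot)$ with the geometry precisely. Differentiating $s_\lambda$ $\alpha_j$ times in $x_j$ and then setting variables to prescribed positive integers $x_j = a_j$ must be realized as an intersection number on a product of projective spaces: the $\alpha_j$ derivatives correspond to restricting to a codimension-$\alpha_j$ linear subspace in the $j$-th factor (or equivalently capping with $\xi_j^{\alpha_j}$), and the normalization factors $1/\mu!$ emerge because $c_1(\mathcal O(a_j))$ on $\mathbb P^{n}$ contributes binomial coefficients $\binom{a_j}{k}$ rather than monomials. One must check that after these operations the relevant bundle is still nef (it is, being a sum of nef line bundles restricted to a subvariety) and that the resulting quadratic form in the two "free" hyperplane directions is literally the Hessian of $\partial^\alpha s_\lambda$. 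A secondary obstacle is that Theorem \ref{thm:derivedschurclassesareinHR} only gives membership in $\overline{\HR}(X)$, hence only the \emph{weak} Hodge-Riemann signature $(\ge 0, \le 0,\dots,\le 0)$; but this is exactly enough, since "Lorentzian" is defined as the closure of "strictly Lorentzian", so the limiting signature statement is all that is needed. The M-convexity of the support is orthogonal to the Hodge theory and should simply be cited from the representation-theoretic description of $\operatorname{supp}(s_\lambda)$.
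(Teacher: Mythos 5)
Your overall strategy---realize the Hessians $\partial^{\alpha}N(s_{\lambda})$ as intersection forms of Schur classes of nef bundles on products of projective spaces and invoke Theorem \ref{thm:derivedschurclassesareinHR}---is the same as the paper's, but as sketched it has a genuine gap in the coefficient bookkeeping, precisely at the point you flag as your ``main obstacle.'' On $X=\prod_j\mathbb P^{m_j}$ integration against the fundamental class extracts the single coefficient of $\xi^{(m_1,\ldots,m_e)}$; no factorials appear, and your proposed mechanism for producing the $1/\mu!$ normalization (binomial coefficients from $c_1(\mathcal O(a_j))$) is not correct. In fact the factorials are absorbed entirely by differentiation: one checks that $\partial^{\alpha}N(p)=\tfrac12\sum_{i,j}a_{\alpha+\delta_i+\delta_j}x_ix_j$, so the Hessian matrix is just the coefficient matrix $(a_{\alpha+\delta_i+\delta_j})_{i,j}$ of $s_{\lambda}$ itself. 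The problem is that this matrix cannot be realized as $\int_X\tau_i\tau_j\,s_{\lambda}(\xi)$ (or $\int_X\xi^{\alpha}\tau_i\tau_j\,s_{\lambda}(\xi)$) on any single product of projective spaces: matching multidegrees forces $m=\alpha+2\delta_i+2\delta_j$, which depends on $(i,j)$. The missing idea is the reversal/dual-partition step: writing $q(x)=x_1^{N}\cdots x_e^{N}s_{\lambda}(x^{-1})=s_{\overline{\lambda}}(x)$ for the dual partition $\overline{\lambda}$, one has $a_{\alpha+\delta_i+\delta_j}=[s_{\overline{\lambda}}\,t_it_j]_{\beta}$ with $\beta_j=N-\alpha_j$, and \emph{this} is exactly $\int_X\tau_i\tau_j\,s_{\overline{\lambda}}(E)$ for $X=\prod_j\mathbb P^{\beta_j}$ and $E=\oplus_j\pi_j^*\mathcal O(1)$, with $\dim X=|\overline{\lambda}|+2$ as required. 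Without this reversal your construction does not compute the Hessian of $\partial^{\alpha}N(s_{\lambda})$.

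A secondary, smaller issue is the final deduction. Knowing only that every Hessian has the weak signature $(\ge 0,\le 0,\ldots,\le 0)$ does not by itself exhibit $N(s_{\lambda})$ as a limit of strictly Lorentzian polynomials; you must either invoke the Br\"and\'en--Huh characterization (which is where your M-convexity-of-support citation becomes load-bearing rather than ``orthogonal''), or produce an explicit strictly Lorentzian approximating family. The paper takes the second route: it perturbs via $\tilde x_j=x_j+\epsilon\sum_p x_p$, which geometrically is the twist $E'=E\langle\epsilon h\rangle$, and uses Remark \ref{rmk:comparisonwitholdpaper} to get $s_{\overline{\lambda}}(E')\in\HR(X)$ (strict signature) for small $\epsilon>0$, so that $N(p_{\epsilon})$ is strictly Lorentzian and converges to $N(s_{\lambda})$. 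Your route via the characterization theorem is legitimate if you are willing to import both that theorem and the M-convexity of $\operatorname{supp}(s_{\lambda})$ from \cite{BrandenHuh} and \cite{Huhetal}, but it is a different and less self-contained argument than the one the paper gives.
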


Our proof needs a preparatory statement.   For this we set $$t_j(x_1,\ldots,x_e) = x_j \text{ for each } j=1,\ldots, e.$$ 
\begin{lemma}\label{lem:sillypolynomial}
Let $p(x_1,\ldots,x_e)$ be a 
homogeneous polynomial of degree $d$, let $e'$ be any integer satisfying $e'\ge\max_{1\le j\le e}\deg_{x_j}(p)$, where $\deg_{x_j}(p)$
is the degree of $p$ with respect to the indeterminate $x_j$, 
 and set
$$ q(x_1,\ldots,x_e) := x_1^{e'} \cdots x_e^{e'} p(x_1^{-1},\ldots, x_e^{-1}).$$
Let $\alpha\in \mathbb Z_{\ge 0}^e$ with $\sum_j \alpha_j=d-2$ and set $\beta_j: = e'-\alpha_j$.  Then
$$ \frac{\partial^{\alpha}}{\partial x^{\alpha}} N(p) =\frac{1}{2} \sum_{1\le i,j\le e} [ q t_i t_j]_{\beta} x_i x_j.$$
\end{lemma}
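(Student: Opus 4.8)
The plan is to compute both sides of the claimed identity directly in terms of coefficients and match them. The key observation is that differentiation of a normalized polynomial interacts cleanly with coefficient extraction: if $p = \sum_\mu a_\mu x^\mu$ then $N(p) = \sum_\mu \frac{a_\mu}{\mu!} x^\mu$, and for a multi-index $\alpha$ with $\sum_j \alpha_j = d-2$ we have
$$\frac{\partial^\alpha}{\partial x^\alpha} N(p) = \sum_{\mu \ge \alpha} \frac{a_\mu}{\mu!} \frac{\mu!}{(\mu-\alpha)!} x^{\mu-\alpha} = \sum_{\nu,\ |\nu| = 2} \frac{a_{\nu+\alpha}}{\nu!} x^{\nu},$$
so the left-hand side is $\frac12\sum_{i,j} a_{\alpha + e_i + e_j} x_i x_j$ (where $e_i$ is the $i$-th standard basis vector and the factor $\frac12$ accounts for $\nu = e_i + e_j$ with $i \ne j$ arising twice and $\nu = 2e_i$ having $\nu! = 2$). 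Thus the content of the lemma is precisely the claim that $[q t_i t_j]_\beta = a_{\alpha + e_i + e_j}$ for $\beta_j = e' - \alpha_j$.

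First I would unwind the definition of $q$. Writing $p = \sum_\mu a_\mu x^\mu$ with each $\mu$ satisfying $\mu_j \le e'$ (this is exactly where the hypothesis $e' \ge \max_j \deg_{x_j}(p)$ is used), we get
$$q(x) = x_1^{e'}\cdots x_e^{e'}\, p(x_1^{-1},\dots,x_e^{-1}) = \sum_\mu a_\mu\, x_1^{e'-\mu_1}\cdots x_e^{e'-\mu_e},$$
which is a genuine polynomial with $[q]_\gamma = a_{\,e'\mathbf{1} - \gamma}$ where $\mathbf{1} = (1,\dots,1)$. Then $q t_i t_j = x_i x_j q$ has coefficient $[q t_i t_j]_\gamma = [q]_{\gamma - e_i - e_j} = a_{\,e'\mathbf{1} - \gamma + e_i + e_j}$. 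Setting $\gamma = \beta = e'\mathbf{1} - \alpha$ gives $[q t_i t_j]_\beta = a_{\alpha + e_i + e_j}$, which is exactly what is needed. Substituting back yields $\frac12 \sum_{i,j} [q t_i t_j]_\beta\, x_i x_j = \frac12 \sum_{i,j} a_{\alpha + e_i + e_j} x_i x_j = \frac{\partial^\alpha}{\partial x^\alpha} N(p)$.

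The only genuinely delicate points are bookkeeping rather than conceptual: making sure the combinatorial factor of $\frac12$ is correctly tracked (the diagonal terms $i = j$ and the symmetry of the off-diagonal terms), and confirming that the hypothesis on $e'$ guarantees $q$ is a polynomial (no negative exponents) so that coefficient extraction is legitimate and that all the multi-indices $\beta$, $\beta - e_i - e_j$ appearing have non-negative entries in the relevant range. I expect the main obstacle, such as it is, to be simply presenting these index manipulations transparently; there is no serious mathematical difficulty once the coefficient dictionary $[q]_\gamma = a_{e'\mathbf 1 - \gamma}$ is in hand.
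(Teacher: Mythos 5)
Your proposal is correct and is essentially the paper's own argument: the paper likewise writes $p=\sum_\mu a_\mu x^\mu$, identifies $\frac{\partial^\alpha}{\partial x^\alpha}N(p)$ with $\frac12\sum_{i,j}a_{\alpha+\delta_i+\delta_j}x_ix_j$, and then matches this with $\frac12\sum_{i,j}[qt_it_j]_\beta x_ix_j$ "by expanding $p$ in monomials." You have simply filled in the coefficient bookkeeping (the factor $\frac12$ and the dictionary $[q]_\gamma=a_{e'\mathbf 1-\gamma}$) that the paper leaves to the reader.
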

\begin{proof}
For $1\le i\le e$ set $\delta_{i}=(0,\ldots,0,1,0,\ldots,0)\in \mathbb Z^e$ with $1$ at the $i$-th position. Then if $p$ is written as $p = \sum_{\mu} a_{\mu} x^{\mu}$, we get 
$$ \frac{\partial^{\alpha}}{\partial x^{\alpha}} N(p) =\frac{1}{2}\sum_{1\le i,j\le e}a_{\alpha+\delta_{i}+\delta_{j}}x_{i}x_{j}= \frac{1}{2}\sum_{1\le i,j\le e} [ q t_i t_j]_{\beta} x_i x_j,$$
as one can check by  expanding $p$ in monomials.
\end{proof}

\begin{proof}[Proof of Theorem \ref{thm:schurlorentzian}] 
Take a   partition $\lambda=(\lambda_{1},\ldots,\lambda_{N})$ of $d:=|\lambda|$ with $0\le \lambda_N\le \cdots \le \lambda_1$ and assume $\lambda_1\le e$ else the statement is trivial.  Then   $d$  is the degree of $s_{\lambda}(x_1,\ldots,x_e)$. 
Note that by adding zero members to the partition $\lambda$ we may increase $N$ without changing the value of $s_{\lambda}$. We may therefore suppose that in our case $N\ge e$. 
The dual partition to $\lambda$ is defined by
$$\overline{\lambda}_i: = e-\lambda_{N-i} \text{ for } i=1,\ldots,N$$
so $|\overline{\lambda}| = Ne - |\lambda| = Ne-d$.  

Applying the definition
$$s_{\lambda} = \det \left(\begin{array}{ccccc} c_{\lambda_1} & c_{\lambda_1+1} &\cdots &c_{\lambda_1 +N-1}\\
c_{\lambda_2-1} & c_{\lambda_2} &\cdots &c_{\lambda_2 +N-2}\\
\vdots & \vdots & \vdots & \vdots\\
c_{\lambda_N-N+1} & c_{\lambda_N -N +2}&\cdots &c_{\lambda_N}\\
\end{array}\right)$$
to 
$$x_1^N \cdots x_e^N s_{\lambda}(x_1^{-1} ,\ldots,x_e^{-1})$$
and multiplying  each row of the matrix defining
$$ s_{\lambda}(x_1^{-1} ,\ldots,x_e^{-1})$$
with $x_1\cdots x_e$, we get
$$x_1^N \cdots x_e^N s_{\lambda}(x_1^{-1} ,\ldots,x_e^{-1})=$$ $$
\det \left(\begin{array}{ccccc} c_{e-\lambda_1} & c_{e-\lambda_1-1} &\cdots &c_{e-\lambda_1 -N+1}\\
c_{e-\lambda_2+1} & c_{e-\lambda_2} &\cdots &c_{e-\lambda_2 -N+2}\\
\vdots & \vdots & \vdots & \vdots\\
c_{e-\lambda_N+N-1} & c_{e-\lambda_N +N -2}&\cdots &c_{e-\lambda_N}\\
\end{array}\right)=
s_{\bar\lambda}(x_1,\ldots,x_e).$$

Thus
$$s_{\overline{\lambda}}(x_1,\ldots,x_e) = x_1^N \cdots x_e^N s_{\lambda}(x_1^{-1} ,\ldots,x_e^{-1})$$
and, equivalently,
$$s_{\lambda}(x_1,\ldots,x_e) = x_1^N \cdots x_e^N s_{\overline{\lambda}}(x_1^{-1} ,\ldots,x_e^{-1}).$$

It is tempting to now apply Lemma \ref{lem:sillypolynomial}, but before doing that we introduce a small perturbation.   For  $\epsilon>0$ set $\tilde{x}_j : = x_j + \epsilon \sum_p x_p$ and let
$$q_\epsilon(x_1,\ldots,x_e) := s_{\overline{\lambda}}(\tilde{x}_1,\ldots,\tilde{x}_e)$$
and
$$p_{\epsilon}(x_1,\ldots,x_e) := x_1^N \cdots x_e^N q_{\epsilon}(x_1^{-1},\cdots,x_e^{-1}),$$
so
\begin{equation}q_{\epsilon}(x_1,\ldots,x_e) = x_1^N \cdots x_e^N p_{\epsilon}(x_1^{-1},\cdots,x_e^{-1}).\label{eq:defqepsilon}\end{equation}
We will show that $N(p_{\epsilon})$ is strictly Lorentzian for small $\epsilon>0$,  which completes the proof since $p_\epsilon$ tends to $s_{\lambda}$ as $\epsilon$ tends to zero.

To this end, let $\alpha\in \mathbb Z_{\ge 0}^e$ with $\sum_j \alpha_j=d-2$ and set $\beta_j : =N- \alpha_j$ and 
$$ X := \prod_{j=1}^e \mathbb P^{\beta_j}. $$
Let $\tau_j$ denote the pulback of the hyperplane class on $\mathbb P^{\beta_j}$ to $X$, and set $h:= \sum_j \tau_j$ which is ample.   Next set
$$E :=  \bigoplus_{j=1}^e  \pi_j^* \mathcal O_{\mathbb P^{\beta_j}}(1) \text{ and } E' := E\langle \epsilon h\rangle.$$
Then $E$ is a nef vector bundle on $X$ and by construction $\dim X  = Ne-d+2 = |\overline{\lambda}|+2$.     So from Theorem \ref{thm:derivedschurclassesareinHR} we know $s_{\overline{\lambda}}(E) \in \overline{\HR}(X)$.  In fact by Remark \ref{rmk:comparisonwitholdpaper} we actually have $s_{\overline{\lambda}}(E')\in \HR(X)$ for sufficiently small $\epsilon>0$ and we assume henceforth this is the case.

Now by \eqref{eq:defqepsilon} and Lemma \ref{lem:sillypolynomial},
\begin{equation}\frac{\partial^{\alpha}}{\partial x^{\alpha}} N(p_{\epsilon}) =  \frac{1}{2}\sum_{1\le i,j\le e} [ q_{\epsilon} t_i t_j]_{\beta} x_i x_j\label{eq:proofschulorenzian1}\end{equation}
and our goal is to show that this has the desired signature.  But this is precisely what we already know, since thinking of $s_{\bar\lambda}(E') \tau_i \tau_j$ as a homogeneous polynomial in $\tau_1,\ldots,\tau_e$,  integrating over $X$ picks out precisely the coefficient of $\tau^{\beta}$, and as $E'$ has Chern roots $\tau_1 + \epsilon h, \cdots ,\tau_e+\epsilon h$ this becomes
$$\int_{X} s_{\bar\lambda}(E') \tau_i \tau_j = [ q_{\epsilon} t_i t_j]_{\beta}.$$
Hence the quadratic form in \eqref{eq:proofschulorenzian1} is precisely the intersection form 
$\frac{1}{2}Q_{s_{\bar\lambda}(E')}$ on $H^{1,1}(X)$, 
which has signature $(+,-,\ldots,-)$ and we are done.
\end{proof}

\begin{remark}
There is a lot of overlap between what we have here and the original proof in \cite{Huhetal}.  For instance we rely here on our Theorem that Schur classes of (certain) ample vector bundles have the Hodge-Riemann property, which in turn relies on the Bloch-Gieseker theorem and thus on the classical Hard-Lefschetz Theorem.  On the other hand,  \cite{Huhetal} relies on the fact that the volume function on a projective variety is Lorentzian, which is a facet of the Hodge-index inequalities (that are a consequence of the Hodge-Riemann bilinear relations).  

Also, instead of our cone classes discussed in \S\ref{sec:coneclasses}, the authors in \cite{Huhetal} use a different aspect of Schur classes that is also a degeneracy locus.  Finally we remark the use of the dual partition $\overline{\lambda}$ also appears crucially in \cite{Huhetal}.  Nevertheless there is a slightly different feel to the two proofs, and we leave it to the readers to decide if they consider them ``essentially the same" \cite{gowers_proofssame}.
\end{remark}

\bibliography{hrclassesbib.bib}

\begin{thebibliography}{10}

\bibitem{AissenSchoenbergWhitney}
Michael Aissen, I.~J. Schoenberg, and A.~M. Whitney.
\newblock On the generating functions of totally positive sequences. {I}.
\newblock {\em J. Analyse Math.}, 2:93--103, 1952.

\bibitem{BlochGieseker}
Spencer Bloch and David Gieseker.
\newblock The positivity of the {C}hern classes of an ample vector bundle.
\newblock {\em Invent. Math.}, 12:112--117, 1971.

\bibitem{BrandenHuh}
Petter Br\"{a}nd\'{e}n and June Huh.
\newblock Lorentzian polynomials.
\newblock {\em Ann. of Math. (2)}, 192(3):821--891, 2020.

\bibitem{Chen}
William Y.~C. Chen, Larry X.~W. Wang, and Arthur L.~B. Yang.
\newblock Schur positivity and the {$q$}-log-convexity of the {N}arayana
  polynomials.
\newblock {\em J. Algebraic Combin.}, 32(3):303--338, 2010.

\bibitem{Cvetkovski}
Zdravko Cvetkovski.
\newblock {\em Inequalities}.
\newblock Springer, Heidelberg, 2012.
\newblock Theorems, techniques and selected problems.

\bibitem{DemaillyPeternellSchneider}
Jean-Pierre Demailly, Thomas Peternell, and Michael Schneider.
\newblock Compact complex manifolds with numerically effective tangent bundles.
\newblock {\em J. Algebraic Geom.}, 3(2):295--345, 1994.

\bibitem{FultonIT}
William Fulton.
\newblock {\em Intersection theory}, volume~2 of {\em Ergebnisse der Mathematik
  und ihrer Grenzgebiete. 3. Folge. A Series of Modern Surveys in Mathematics
  [Results in Mathematics and Related Areas. 3rd Series. A Series of Modern
  Surveys in Mathematics]}.
\newblock Springer-Verlag, Berlin, second edition, 1998.

\bibitem{FultonLazarsfeld}
William Fulton and Robert Lazarsfeld.
\newblock Positive polynomials for ample vector bundles.
\newblock {\em Ann. of Math. (2)}, 118(1):35--60, 1983.

\bibitem{Gao}
Alice L.~L. Gao, Matthew H.~Y. Xie, and Arthur L.~B. Yang.
\newblock Schur positivity and log-concavity related to longest increasing
  subsequences.
\newblock {\em Discrete Math.}, 342(9):2570--2578, 2019.

\bibitem{gowers_proofssame}
Tim Gowers.
\newblock When are two proofs essentially the same?
\newblock
  \url{https://gowers.wordpress.com/2007/10/04/when-are-two-proofs-essentially-the-same}.
\newblock Accessed: 2021-05-12.

\bibitem{Gromov90}
M.~Gromov.
\newblock Convex sets and {K}\"{a}hler manifolds.
\newblock In {\em Advances in differential geometry and topology}, pages 1--38.
  World Sci. Publ., Teaneck, NJ, 1990.

\bibitem{Huhetal}
June Huh, Jacob Matherne, Karola M\`esz\`aros, and Avery St.~Dizier.
\newblock Logarithmic concavity of {S}chur and related polynomials, 2019.
\newblock arXiv:1906.09633.

\bibitem{Kempf}
G.~Kempf and D.~Laksov.
\newblock The determinantal formula of {S}chubert calculus.
\newblock {\em Acta Math.}, 132:153--162, 1974.

\bibitem{Lam}
Thomas Lam, Alexander Postnikov, and Pavlo Pylyavskyy.
\newblock Schur positivity and {S}chur log-concavity.
\newblock {\em Amer. J. Math.}, 129(6):1611--1622, 2007.

\bibitem{Lazbook2}
Robert Lazarsfeld.
\newblock {\em Positivity in algebraic geometry. {II}}, volume~49 of {\em
  Ergebnisse der Mathematik und ihrer Grenzgebiete. 3. Folge. A Series of
  Modern Surveys in Mathematics [Results in Mathematics and Related Areas. 3rd
  Series. A Series of Modern Surveys in Mathematics]}.
\newblock Springer-Verlag, Berlin, 2004.
\newblock Positivity for vector bundles, and multiplier ideals.

\bibitem{Miyoka}
Yoichi Miyaoka.
\newblock The {C}hern classes and {K}odaira dimension of a minimal variety.
\newblock In {\em Algebraic geometry, {S}endai, 1985}, volume~10 of {\em Adv.
  Stud. Pure Math.}, pages 449--476. North-Holland, Amsterdam, 1987.

\bibitem{Newton}
Isaac Newton.
\newblock {\em Arithmetica universalis: sive de compositione et resolutione
  arithmetica liber}.
\newblock 1707.

\bibitem{Okounkov-logconcave}
Andrei Okounkov.
\newblock Why would multiplicities be log-concave?
\newblock In {\em The orbit method in geometry and physics ({M}arseille,
  2000)}, volume 213 of {\em Progr. Math.}, pages 329--347. Birkh\"{a}user
  Boston, Boston, MA, 2003.

\bibitem{Richards}
Donald St.~P. Richards.
\newblock Log-convexity properties of {S}chur functions and generalized
  hypergeometric functions of matrix argument.
\newblock {\em Ramanujan J.}, 23(1-3):397--407, 2010.

\bibitem{RossTomaHR}
Julius Ross and Matei Toma.
\newblock {H}odge-{R}iemann bilinear relations for {S}chur classes of ample
  vector bundles, 2019.
\newblock arXiv:1905.13636, to appear in Ann. Sci. \'{E}cole Norm. Sup.

\bibitem{XiaoHighDegree}
Jian Xiao.
\newblock On the positivity of high-degree schur classes of an ample vector
  bundle, 2020.
\newblock arXiv:2007.12425.

\end{thebibliography}

\end{document}